\newcommand{\toweak}{\rightharpoonup}
\newcommand{\toset}{\rightrightarrows}
\def \rla{\right\rangle}
\def \lla{\left\langle}
\newcommand{\memo}[1]{{\color{red!80!black}[#1]}}
 \newcommand{\ds}{\displaystyle}
\newcommand{\A}{\mathbf{A}}
\newcommand{\D}{\mathbf{D}}
\newcommand{\F}{\mathbf{F}}
\newcommand{\G}{\mathbf{G}}
\renewcommand{\H}{\mathcal{H}}
\newcommand{\K}{\mathbf{K}}
\renewcommand{\L}{\mathbf{L}}
\newcommand{\R}{\mathbf{R}}
\newcommand{\JLambda}[1]{J_{\lambda #1}^{\bfLambda}}
\renewcommand{\a}{\mathbf{a}}
\renewcommand{\b}{\mathbf{b}}
\renewcommand{\u}{\mathbf{u}}
\renewcommand{\v}{\mathbf{v}}
\newcommand{\s}{\mathbf{s}}
\newcommand{\w}{\mathbf{w}}
\newcommand{\x}{\mathbf{x}}
\newcommand{\y}{\mathbf{y}}
\newcommand{\z}{\mathbf{z}}
\newcommand{\I}{\mathcal{I}}
\newcommand{\bfLambda}{\bm{\Lambda}}
\newcommand{\yosida}[1]{{}^{#1} \hspace{-0.2em}}
\newcommand{\inner}[2]{\lla #1, #2 \rla}
\newcommand{\norm}[1]{{\left\|{#1}\right\|}}
\newcommand{\dr}[2]{T_{#1,#2}}
\newcommand{\jh}[1]{\footnote{\textcolor{blue}{[JH: #1]}}}
\newcommand{\jhsolved}[1]{}
\newcommand{\memosolved}[2]{}
\newcommand{\SfA}[3]{S_{#1,#2}(#3; \gamma)}
\renewcommand{\Re}{{\rm I}\! {\rm R}}
\DeclareMathOperator*{\argmin}{arg\,min}
\DeclareMathOperator*{\Fix}{Fix}
\DeclareMathOperator*{\dist}{dist}
\DeclareMathOperator*{\sri}{sri}
\DeclareMathOperator*{\interior}{int}
\DeclareMathOperator*{\ran}{ran}
\DeclareMathOperator*{\dom}{dom}
\DeclareMathOperator*{\gra}{gra}
\DeclareMathOperator*{\Id}{Id}
\DeclareMathOperator*{\bfId}{\bm{\mathrm{Id}}}
\DeclareMathOperator*{\zer}{zer}
\newcommand{\sigmaf}[1][]{%
  \sigma_{_{f%
  \ifthenelse{\equal{#1}{}}{}{{}_{\mathrm{#1}}}%
  }}%
}
\newcommand{\Lf}[1][]{%
  L_{_{f%
  \ifthenelse{\equal{#1}{}}{}{{}_{\mathrm{#1}}}%
  }}%
}
\newcommand{\rhof}[1][]{%
  \rho_{_{f%
  \ifthenelse{\equal{#1}{}}{}{{}_{\mathrm{#1}}}%
  }}%
}
  \newcommand{\smartqedmark}{\qed}
  \newcommand{\smartqedmark}{\qedhere}
\crefname{assumption}{Assumption}{Assumptions}
 \numberwithin{equation}{section}
 \newtheorem{assumption}{Assumption}
\newtheorem{theorem}{Theorem}[section]
\newtheorem{definition}[theorem]{Definition}
\newtheorem{corollary}[theorem]{Corollary}
\newtheorem{proposition}[theorem]{Proposition}
\newtheorem{lemma}[theorem]{Lemma}
\newtheorem{assumption}[theorem]{Assumption}
\theoremstyle{definition} 	
\newtheorem{example}[theorem]{Example}
\newtheorem{remark}[theorem]{Remark}
\def\TheTitle{Douglas--Rachford for multioperator comonotone inclusions with applications to multiblock optimization}
\title{\TheTitle\thanks{Version of \today. }}
\titlerunning{Douglas--Rachford for multioperator comonotone inclusions}
\author{Jan Harold Alcantara \and  Minh N. Dao  \and  Akiko Takeda}
\institute{*Corresponding author: Jan Harold Alcantara \at
	 Center for Advanced Intelligence Project, RIKEN, Tokyo, Japan.  \\	\email{\url{janharold.alcantara@riken.jp}}\\ \text{} \\ 
	Minh N. Dao \at
	School of Science, RMIT University, Melbourne, VIC 3000, Australia. \\ \email{\url{minh.dao@rmit.edu.au}}\\ \text{} \\ 
    Akiko Takeda \at Department of Mathematical Informatics, Graduate School of Information Science and Technology, University of Tokyo,
Tokyo, Japan, and Center for Advanced Intelligence Project, RIKEN, Tokyo, Japan. \\
\email{\url{takeda@mist.i.u-tokyo.ac.jp}}
	}
\date{}
 \title{\TheTitle}
 \date{\today}
\author{Jan Harold Alcantara\thanks{\url{janharold.alcantara@riken.jp}.  Center for Advanced Intelligence Project, RIKEN, Tokyo, Japan.} 
\qquad Minh N. Dao\thanks{\url{minh.dao@rmit.edu.au}. School of Science, RMIT University, Melbourne, VIC 3000, Australia. }
\qquad Akiko Takeda\thanks{\url{takeda@mist.i.u-tokyo.ac.jp}.
Department of Mathematical Informatics, Graduate School of Information Science and Technology, University of Tokyo,
Tokyo, Japan, and Center for Advanced Intelligence Project, RIKEN, Tokyo, Japan.}
}
\begin{document}
\maketitle 

\begin{abstract}
     We study the convergence of the adaptive Douglas--Rachford (aDR) algorithm for solving a multioperator inclusion problem involving the sum of maximally comonotone operators. To address such problems, we adopt a product space reformulation that accommodates nonconvex-valued operators, which is essential when dealing with comonotone mappings. We establish convergence of the aDR method under comonotonicity assumptions, subject to suitable conditions on the algorithm parameters and comonotonicity moduli of the operators. Our analysis leverages the Attouch--Th\'{e}ra duality framework, which allows us to study the convergence of the aDR algorithm via its application to the dual inclusion problem. As an application, we derive a multiblock ADMM-type algorithm for structured convex and nonconvex optimization problems by applying the aDR algorithm to the operator inclusion formulation of the KKT system. The resulting method extends to multiblock and nonconvex settings the classical duality between the Douglas--Rachford algorithm and the alternating direction method of multipliers in the convex two-block case. Moreover, we establish convergence guarantees for both the fully convex and strongly convex-weakly convex regimes. \\

\ifdefined\submit
\keywords{ adaptive Douglas--Rachford; multioperator inclusion; comonotone operator; product space reformulation; Attouch--Th\'{e}ra dual;  multiblock optimization; ADMM }
   	
\else 
 \noindent {\bf Keywords.}\ adaptive Douglas--Rachford; multioperator inclusion; comonotone operator; product space reformulation; Attouch--Th\'{e}ra dual;  multiblock optimization; ADMM
\fi 
 \end{abstract}

\section{Introduction}
Consider the multioperator inclusion problem
\begin{equation}
    \text{Find~}x\in \H ~\text{such that } 0\in A_1 (x) + A_2(x) + \cdots + A_m (x)
    \label{eq:inclusion}
\end{equation}
where $A_1,A_2,\dots,A_m:\H \toset \H$ are set-valued operators on a real Hilbert space $\H$. In this paper, we study the convergence of the \textit{Douglas--Rachford algorithm} based on the product space reformulation proposed in \cite{AlcantaraTakeda2025} for solving \eqref{eq:inclusion} when the operators $A_1,A_2,\dots, A_m$ are \textit{maximal comonotone} (see \cref{defn:genmonotone}). 

\subsection{Motivation}

The Douglas--Rachford algorithm for comonotone operators is of particular interest because such operators naturally arise in the convergence analysis of the alternating direction method of multipliers (ADMM) for linearly constrained multiblock problems of the form  
\begin{equation}
    \min_{u_i \in \mathcal{H}_i} \, f_1(u_1) + \cdots + f_m(u_m) \quad \text{s.t.} \quad \sum_{i=1}^m L_i u_i = b,
    \label{eq:multiblock}
\end{equation}
where $b \in \mathcal{H}$, each $f_i : \mathcal{H}_i \to (-\infty, +\infty]$ is a proper closed function, and $L_i : \mathcal{H}_i \to \mathcal{H}$ is a bounded linear operator. This connection becomes especially relevant when the functions $f_i$ exhibit a mix of strong and weak convexity, as will be elaborated in detail in \cref{sec:admm}.  

This correspondence was observed by Bartz, Campoy, and Phan~\cite{Bartz2022AdaptiveADMM} in the two-block case:
\begin{equation}
    \min_{u_i \in \mathcal{H}_i} \, f_1(u_1) + f_2(u_2) \quad \text{s.t.} \quad L_1 u_1 - u_2 = 0,
    \label{eq:twoblock}
\end{equation}
where $f_1$ is strongly convex and $f_2$ is weakly convex. Under suitable assumptions, critical points of the Lagrangian associated with \eqref{eq:twoblock} are indeed solutions of the original problem and correspond to solutions of the inclusion problem \cite[Lemma 4.3 and Proposition 4.1]{Bartz2022AdaptiveADMM}
\begin{equation}
    \text{Find } x \in \mathcal{H} \text{ such that } 0 \in A_1(x) + A_2(x),
    \label{eq:inclusion2}
\end{equation}
where the operators $A_1$ and $A_2$ are defined by
\begin{equation}
    A_1 \coloneqq -L_1 \circ (\partial f_1)^{-1} \circ (-L_1^\top), \quad A_2 \coloneqq (\widehat{\partial} f_2)^{-1},
    \label{eq:A1A2}
\end{equation}
with the subdifferential and inverse mappings defined as in \cref{sec:setvaluedoperators}. It is shown in \cite[Lemmas 4.6 and 4.7]{Bartz2022AdaptiveADMM} that $A_1$ and $A_2$ are strongly and weakly comonotone, respectively. 
Moreover, the Douglas--Rachford (DR) algorithm applied to the inclusion~\eqref{eq:inclusion2} with operators $A_1$ and $A_2$ defined in~\eqref{eq:A1A2} is equivalent to the ADMM iteration for~\eqref{eq:twoblock} under suitable conditions~\cite[Lemmas 4.6 and 4.8]{Bartz2022AdaptiveADMM}. Leveraging this equivalence, the convergence of ADMM for~\eqref{eq:twoblock} with strongly convex $f_1$ and weakly convex $f_2$ was established via the DR framework, using the properties of strongly and weakly comonotone operators $A_1$ and $A_2$. This extends the classical duality between ADMM and DR in fully convex settings~\cite{Gabay1983MultiplierVI} to broader problems involving both strongly and weakly convex functions. 

However, extending this framework to the multiblock setting remains an open question, as the Douglas--Rachford algorithm for multioperator inclusion problems involving comonotone operators is itself not yet explored.

\subsection{Two-operator reformulations}
 A standard strategy for addressing \eqref{eq:inclusion} involves reformulating it as a two-operator inclusion over a lifted space. A traditional two-operator reformulation is the \emph{Pierra's product space reformulation} \cite{Pierra1976,Pierra1984}: 
 \begin{equation}
        \text{Find}~\x \in \H^m ~\text{such that } 0\in \F_{\rm Pierra} (\x) + \G_{\rm Pierra}(\x),  
        \label{eq:pierra}
    \end{equation}
where $\x = (x_1,\dots,x_m)\in \H^m$, 
\ifdefined\submit 
$\F_{\rm Pierra} (\x) \coloneqq  A_1(x_1) \times \cdots \times A_m(x_m)$
\else $$\F_{\rm Pierra} (\x) \coloneqq  A_1(x_1) \times \cdots \times A_m(x_m)$$ \fi and $\G_{\rm Pierra} \coloneqq N_{\D_m} $, the normal cone\footnote{That is, $N_{\D_m} (\x) = \D_m^\perp = \{ \w = (w_1,\dots,w_m) \in \H^m : \sum_{i=1}^m w_i =0\}$ for $\x \in \D_m$ and $N_{\D_m}(\x) = \emptyset$ if $\x \notin \D_m$.} operator to $\D_m \coloneqq \{ (x_1,\dots,x_m)\in \H^m: x_1=\cdots = x_m\}$. However, in this reformulation, the second operator $\G_{\rm Pierra}$ is fixed as a maximal monotone operator. As a result, any nonmonotonicity present in the original operators $A_1, \dots, A_m$ is entirely absorbed into the first operator $\F_{\rm Pierra}$, which may be undesirable for convergence analysis. To the best of our knowledge, there are currently no results on applying the Douglas--Rachford algorithm to \eqref{eq:pierra} in the nonmonotone setting.

Based on \cite{Kruger1985}, another product space reformulation was proposed by \cite{Campoy2022}:
     \begin{equation}
        \text{Find}~\x \in \H^{m-1} ~\text{such that } 0\in \F_{\rm Campoy} (\x) + \G_{\rm Campoy}(\x),  
        \label{eq:campoy}
    \end{equation}
where 
\begin{align*}
    \F_{\rm Campoy} (\x) & \coloneqq A_1(x_1) \times \cdots \times A_{m-1}(x_{m-1}),\\
    \G_{\rm Campoy} (\x) & \coloneqq  \frac{1}{m-1} A_m(x_1) \times \cdots \times \frac{1}{m-1}A_{m}(x_{m-1}) + N_{\D_{m-1}}(\x) . 
\end{align*}
This reformulation reduces the ambient space dimension by one compared to \eqref{eq:pierra}, which could be more desirable in practice \cite{MalitskyTam2023}. Moreover, since $A_m$ is embedded in the second operator, the reformulation permits some nonmonotonicity in the second operator $\G_{\rm Campoy}$, in contrast to the formulation in \eqref{eq:pierra}. However, as noted in \cite{AlcantaraTakeda2025}, this reformulation is not suitable when the operator $A_m$ is not convex-valued. This last point is quite important and relevant, since weakly comonotone operators are not necessarily convex-valued in general; see \cref{ex:comonotone_nonconvexvalue}.

\subsection{Our approach and contributions}
In this work, we use the product space reformulation proposed in \cite{AlcantaraTakeda2025}, which introduces a slight but crucial modification of Campoy's formulation \eqref{eq:campoy} to accommodate nonconvex-valued operators, such as comonotone ones. In particular, the flexible reformulation is given by 
     \begin{equation}
        \text{Find}~\x \in \H^{m-1} ~\text{such that } 0\in \F (\x) + \G(\x),  
        \label{eq:alcantara_takeda}
    \end{equation}
where 
    \begin{align}
        \F (\x) & \coloneqq \F_{\rm Campoy} (\x) = A_1(x_1) \times \cdots \times A_{m-1}(x_{m-1}), \label{eq:F} \\
        \G (\x) & \coloneqq  \K (\x) + N_{\D_{m-1}}(\x) ,\label{eq:G}
    \end{align}
with 
    \begin{equation*}
        \K (\x ) \coloneqq \begin{cases}
            \{\frac{1}{m-1}( v, \ldots, v): v\in A_m(x_1) \} & \text{if}~\x =(x_1,\dots,x_{m-1})\in \D_{m-1} \\
            \emptyset & \text{otherwise}.
        \end{cases}
    \end{equation*}
Indeed,
    \begin{equation}
        0\in \F(\x) + \G(\x) \quad \Longleftrightarrow \quad \exists x\in \H ~\text{s.t.}~\x = (x,\dots, x)~\text{and}~0\in A_1(x)+\cdots+ A_m(x).
        \label{eq:zeros_equivalence}
    \end{equation}
With this product space reformulation, we are able to apply the adaptive Douglas--Rachford (aDR) algorithm for two-operator inclusion problems proposed in \cite{DaoPhan2019} to the reformulated problem \eqref{eq:alcantara_takeda}. The resulting algorithm is detailed in \cref{alg:dr} in \cref{sec:dr_comonotone}. Our main contributions are twofold:
\begin{enumerate}[(I)]
    \item We establish the convergence of the adaptive Douglas--Rachford algorithm for multioperator inclusion problems where each $A_i$ is maximally $\sigma_i$-comonotone, with $\sigma_i \geq 0$ for $i = 1, \dots, m{-}1$ and $\sigma_m \leq 0$. Convergence is guaranteed under suitable assumptions on the monotonicity moduli and algorithm parameters; see \cref{thm:adaptiveDR_combined}. Our analysis relies on the Attouch--Th\'era duality framework \cite{AttouchThera1996}, which facilitates the analysis of the aDR algorithm on the dual inclusion problem. As a by-product, our results resolve (in the negative) an open question posed in \cite[Remark 4.2]{Bartz2020Demiclosedness} regarding the strong convergence of the shadow sequence in the case $m = 2$ and $\sigma_1 + \sigma_2 > 0$. We also improve the result in \cite[Theorem 4.2]{Bartz2020Demiclosedness} by establishing the weak convergence of the shadow sequence under less restrictive assumptions. 
    
    \item We derive a multiblock ADMM algorithm with convergence guarantees for solving problem~\eqref{eq:multiblock}.
    \begin{enumerate}[(a)]
        \item This result generalizes to the multiblock case the well-known duality between the Douglas--Rachford and ADMM algorithms for two-block problems in the convex setting, as well as its extension to the strongly convex-weakly convex setting in \cite{Bartz2022AdaptiveADMM}. Notably, the derived multiblock ADMM algorithm preserves this duality structure even when the involved functions are not convex. In particular, the algorithm corresponds to the Douglas--Rachford method applied to the multioperator inclusion associated with the (nonsmooth) KKT system of \eqref{eq:multiblock}.
        
        \item Leveraging our main convergence result (\cref{thm:adaptiveDR_combined}), the proposed multiblock ADMM algorithm enjoys theoretical convergence guarantees in both the fully convex case (i.e., each $f_i$ is convex) and the strongly convex-weakly convex setting (i.e., $f_i$ is strongly convex for $i = 1, \dots, m{-}1$, and $f_m$ is weakly convex).
    \end{enumerate}
\end{enumerate}

\subsection{Organization of the paper}

\Cref{sec:prelim} reviews key definitions and results on generalized monotone and comonotone operators, as well as existing convergence results for the adaptive Douglas--Rachford (aDR) algorithm in the two-operator setting. In \cref{sec:attouchthera}, we revisit the aDR algorithm through the lens of the Attouch--Th\'era duality framework. This perspective enables us to leverage convergence results established for generalized monotone operators to obtain \emph{improved} convergence results for comonotone operators. As a key application, we resolve the open problem posed in \cite[Remark 4.2]{Bartz2020Demiclosedness} regarding the strong convergence of the shadow sequence. 

Using the refined convergence theory from \cref{sec:attouchthera}, we establish convergence results for the multioperator inclusion problem reformulated as the two-operator problem \eqref{eq:alcantara_takeda} in \cref{sec:directapplication}. A more delicate analysis is presented in \cref{sec:refinedanalysis}, where we again exploit the Attouch--Th\'era duality framework to relax the conditions on the comonotonicity moduli, particularly in the case where $A_m$ is weakly comonotone. 

In \cref{sec:admm}, we apply our main results to the multiblock optimization problem \eqref{eq:multiblock}, which involves strongly convex functions and one weakly convex term, and derive a convergent ADMM-type algorithm. Finally, concluding remarks and directions for future work are provided in \cref{sec:conclusion}.

\section{Preliminaries}\label{sec:prelim}
Throughout this paper, $\H$ denotes a real Hilbert space endowed with the inner product $\lla \cdot, \cdot \rla$ and induced norm $\|\cdot\|$. For any real numbers $\alpha,\beta\in \Re$ and any $x,y \in \H$, we recall the following identity:
    \begin{equation}
        \norm{\alpha x + \beta y}^2 = \alpha (\alpha +\beta) \norm{x}^2 + \beta (\alpha+\beta) \norm{y}^2 - \alpha\beta \norm{x - y}^2. \label{eq:identity_squarednorm}
    \end{equation}
When $\alpha+\beta \neq 0$, \eqref{eq:identity_squarednorm} is equivalent to 
    \begin{equation}
        \alpha \norm{x}^2 + \beta \norm{y}^2 = \frac{\alpha\beta}{\alpha + \beta}\norm{x - y}^2 + \frac{1}{\alpha + \beta} \norm{\alpha x + \beta y}^2. \label{eq:identity_squarednorm2}
    \end{equation}
The following identity is an important tool in our convergence analysis.  
\begin{lemma}
\label{lemma:squaredsum_to_sumsquared}
For any nonzero real numbers $\theta_1,\dots,\theta_n$ with $\sum_{i=1}^n \frac{1}{\theta_i}=1$ and for any $u_1,\dots,u_n\in \H$, it holds that 
    \begin{equation}
        \norm{\sum_{i=1}^n u_i}^2 = \sum_{i=1}^n \theta_i\norm{u_i}^2 - \sum_{1\leq i<j\leq n} \frac{1}{\theta_i \theta_j} \norm{\theta_i u_i-\theta_j u_j}^2. \label{eq:squaredsum_to_sumsquared}
    \end{equation}
\end{lemma}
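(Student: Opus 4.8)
The plan is to expand the double sum on the right-hand side of \eqref{eq:squaredsum_to_sumsquared} and compare it term-by-term with the elementary expansion $\norm{\sum_{i=1}^n u_i}^2 = \sum_{i=1}^n \norm{u_i}^2 + 2\sum_{1\le i<j\le n}\inner{u_i}{u_j}$. First I would note that for each pair $i<j$,
\[
\frac{1}{\theta_i\theta_j}\norm{\theta_i u_i-\theta_j u_j}^2 = \frac{\theta_i}{\theta_j}\norm{u_i}^2 + \frac{\theta_j}{\theta_i}\norm{u_j}^2 - 2\inner{u_i}{u_j},
\]
so that, summing over all pairs, the cross terms contribute exactly $-2\sum_{i<j}\inner{u_i}{u_j} = -\bigl(\norm{\sum_i u_i}^2 - \sum_i\norm{u_i}^2\bigr)$, while the diagonal terms contribute $\sum_{i<j}\bigl(\tfrac{\theta_i}{\theta_j}\norm{u_i}^2 + \tfrac{\theta_j}{\theta_i}\norm{u_j}^2\bigr)$.

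Next I would collect, for each fixed index $k$, the coefficient of $\norm{u_k}^2$ in the diagonal part: it receives $\theta_k/\theta_j$ from every pair with $i=k<j$ and $\theta_k/\theta_i$ from every pair with $i<j=k$, so the total coefficient is $\theta_k\sum_{\ell\neq k}\tfrac{1}{\theta_\ell} = \theta_k\bigl(1-\tfrac{1}{\theta_k}\bigr) = \theta_k-1$, where the hypothesis $\sum_{\ell}\tfrac{1}{\theta_\ell}=1$ is used. Hence the diagonal part equals $\sum_k(\theta_k-1)\norm{u_k}^2 = \sum_k\theta_k\norm{u_k}^2 - \sum_k\norm{u_k}^2$. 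Combining the two contributions yields
\[
\sum_{1\le i<j\le n}\frac{1}{\theta_i\theta_j}\norm{\theta_i u_i-\theta_j u_j}^2 = \sum_{k=1}^n\theta_k\norm{u_k}^2 - \norm{\sum_{k=1}^n u_k}^2,
\]
and rearranging gives \eqref{eq:squaredsum_to_sumsquared}.

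There is no genuine obstacle here beyond careful bookkeeping of which pairs contribute to each $\norm{u_k}^2$; the single point where the normalization $\sum_i 1/\theta_i=1$ is invoked is in reducing the coefficient to $\theta_k-1$. As alternatives, one could instead recognize \eqref{eq:squaredsum_to_sumsquared} as the sign-free weighted-barycenter identity $\norm{\sum_i w_i v_i}^2 = \sum_i w_i\norm{v_i}^2 - \sum_{i<j}w_i w_j\norm{v_i-v_j}^2$ (valid for any reals with $\sum_i w_i=1$), applied with $w_i=1/\theta_i$ and $v_i=\theta_i u_i$; or argue by induction on $n$ from the two-term identity \eqref{eq:identity_squarednorm2}. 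I would favour the direct expansion above, since the constraint $\sum_i 1/\theta_i=1$ does not restrict to subfamilies of $\{\theta_i\}$ and so an induction requires reintroducing an auxiliary parameter, making it less transparent than the one-shot computation.
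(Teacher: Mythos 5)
Your computation is correct: the pairwise expansion, the bookkeeping of the coefficient of $\norm{u_k}^2$ as $\theta_k\sum_{\ell\neq k}\tfrac{1}{\theta_\ell}=\theta_k-1$, and the final rearrangement all check out, and the normalization $\sum_i 1/\theta_i=1$ is used exactly where it must be. The paper takes the route you list only as an alternative: it cites the standard identity $\norm{\sum_i \alpha_i x_i}^2+\sum_{i<j}\alpha_i\alpha_j\norm{x_i-x_j}^2=\sum_i\alpha_i\norm{x_i}^2$ (Bauschke--Combettes, Lemma 2.14) and substitutes $x_i=\theta_i u_i$, $\alpha_i=1/\theta_i$, which is a one-line derivation given the reference. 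Your direct expansion is essentially an inline proof of that same identity in the special case at hand, so the two arguments are mathematically the same decomposition; yours is self-contained at the cost of a little bookkeeping, the paper's is shorter but leans on the cited lemma.
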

\begin{proof}
\ifdefined\submit
This directly follows from \cite[Lemma 2.14]{Bauschke2017}.
\else 
    From \cite[Lemma 2.14]{Bauschke2017}, we have 
    \[ \norm{\sum_{i=1}^n \frac{1}{\theta_i}x_i}^2 + \sum_{i=1}^n \sum_{j=1}^n \frac{1}{2\theta_i\theta_j}\norm{x_i-x_j}^2 = \sum_{i=1}^n \frac{1}{\theta_i}\norm{x_i}^2,\]
    for any $x_1,\dots,x_n\in \H$. Letting $x_i = \theta_i u_i$ gives the desired result.
\fi 
\smartqedmark \end{proof}


A sequence $\{x^k\}$ is said to be \emph{Fej\'{e}r monotone} with respect to a nonempty subset $S\subseteq \H$ if
\ifdefined\submit 
for any $z\in S$, $\norm{x^{k+1}-z } \leq \norm{x^k - z}$  for all $k\in \mathbb{N}$. 
\else 
\[ \forall z \in S,~ \forall k\in \mathbb{N}, \quad \norm{x^{k+1}-z } \leq \norm{x^k - z}.\]
\fi 
We use the standard notations $\to$ and $\toweak$ to denote strong and weak convergence, respectively. 

\subsection{Set-valued operators}
\label{sec:setvaluedoperators}
 A set-valued operator $A:\H\toset \H$ maps each point $x\in \H$ to a subset $A(x)$ of $\H$, which is not necessarily nonempty. The direct image of a subset $D\subseteq \H$ is given by $A(D) \coloneqq \bigcup _{x\in D}A(x)$. The \emph{domain} and \emph{range}  of $A$ are given respectively by $   \dom (A)   \coloneqq \{ x\in \H: A(x) \neq \emptyset\}$ and $ \ran (A)  \coloneqq \{ y\in \H : y\in A(x) ~\text{for some }x\in \H\}. $
The \emph{graph} of $A$ is the subset of $\H\times \H$ given by 
\ifdefined\submit 
$\gra (A) \coloneqq \{ (x,y) \in \H \times \H : y\in A(x)\}. $
\else 
\[\gra (A) \coloneqq \{ (x,y) \in \H \times \H : y\in A(x)\}. \]
\fi 
The \emph{inverse} of $A$, denoted by $A^{-1}$, is the set-valued operator whose graph is given by $\gra (A^{-1}) = \{ (y,x)\in \H\times \H: (x,y) \in \gra (A)\}. $
The \emph{zeros} and \emph{fixed points} of $A$ are given respectively by $\zer (A)  \coloneqq A^{-1}(0) = \{ x: 0\in A(x)\}, $ and $\Fix (A) \coloneqq \{ x\in \H : x\in A(x)\}.$
The \emph{resolvent} of $A:\H\toset\H$ with parameter $\gamma >0$, denoted by $J_{\gamma A}:\H\toset\H$, is defined by $ J_{\gamma A} \coloneqq (\Id+\gamma A)^{-1},$
where $\Id:\H\to\H$ is the identity operator $\Id(x)=x$, and the \emph{Yosida approximation of $A$} with parameter $\gamma$ is $\yosida{\gamma} A \coloneqq \frac{1}{\gamma}(\Id - J_{\gamma A}).$
The \emph{$\lambda$-relaxed resolvent} of $A$ with parameter $\gamma>0$ is defined by 
\ifdefined\submit 
$J_{\gamma A}^{\lambda} \coloneqq \lambda
J_{\gamma A} + (1-\lambda)\Id .$
\else 
\[ J_{\gamma A}^{\lambda} \coloneqq \lambda
J_{\gamma A} + (1-\lambda)\Id .\]
\fi 
The following identity gives the relationship between the resolvent of $A$ and its inverse $A^{-1}$ (see \cite[Proposition 23.7(ii)]{Bauschke2017}):
    \begin{equation}
        \yosida{\gamma}A = \frac{1}{\gamma}\left( \Id - J_{\gamma A} \right) = J_{\frac{1}{\gamma} A^{-1}} \circ \left( \frac{1}{\gamma}\Id \right) .
        \label{eq:resolvent_inverse}
    \end{equation}
    
\subsection{Generalized monotone and comonotone operators}

\begin{definition}
\label{defn:genmonotone}
    Let $A:\H\toset \H$ and let $\sigma\in \Re$. 
    \begin{enumerate}[(i)]
        \item $A$ is \emph{$\sigma$-monotone} if 
        \[\lla x - y, u - v \rla \geq \sigma \norm{x-y}^2 \quad \forall (x,u),(y,v)\in \gra(A) .\]
        The case $\sigma=0$ corresponds to (standard) \emph{monotonicity}, $\sigma>0$ to \emph{strong monotonicity}, and $\sigma<0$ to \emph{weak monotonicity}.  
        \item $A$ is \emph{$\sigma$-comonotone} if 
        \[\lla x - y, u - v \rla \geq \sigma \norm{u-v}^2 \quad \forall (x,u),(y,v)\in \gra(A) .\]
         The case $\sigma=0$ corresponds to (standard) \emph{monotonicity}, $\sigma>0$ to \emph{strong comonotonicity}, and $\sigma<0$ to \emph{weak comonotonicity}.
        \item $A$ is \emph{maximal $\sigma$-monotone} if $A$ is $\sigma$-monotone and there is no $\sigma$-monotone operator whose graph properly contains $\gra (A)$; i.e., for every $(x,u)\in \H\times \H$,
        \[(x,u)\in \gra(A) \quad \Longleftrightarrow \quad \lla x - y, u - v \rla \geq \sigma \norm{x-y}^2 \quad \forall (y,v)\in \gra(A)  .\]
        When $\sigma=0$, we say that $A$ is \emph{maximal monotone}. 
        \item $A$ is \emph{maximal $\sigma$-comonotone} if $A$ is $\sigma$-comonotone and there is no $\sigma$-comonotone operator whose graph properly contains $\gra (A)$; i.e., for every $(x,u)\in \H\times \H$,
        \[(x,u)\in \gra(A) \quad \Longleftrightarrow \quad \lla x - y, u - v \rla \geq \sigma \norm{u-v}^2 \quad \forall (y,v)\in \gra(A)  .\]
    \end{enumerate}
    
\end{definition}

We summarize some facts about maximal monotone operators.

\begin{lemma}
\label{lemma:maximalmonotone_properties}
    Let $A:\H\toset \H$ be a monotone operator, and let $\gamma >0$.
    \begin{enumerate}[(i)]
        \item $J_{\gamma A}$ is monotone and nonexpansive on $\dom (J_{\gamma A})$.
        \item $\dom (J_{\gamma A}) = \ran( \Id+\gamma A) =\H$ if and only if $A$ is maximal monotone.
        \item If $A$ is maximal monotone, then $A(x)$ is convex for any $x\in \H$.
        \item If $A$ and $B$ are maximal monotone operators such that $\interior (\dom (A)) \cap \dom (B) \neq \emptyset$, then $A+B$ is maximal monotone.
    \end{enumerate}
\end{lemma}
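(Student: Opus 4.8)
The plan is to handle the four items separately, since each invokes a different consequence of maximality, and in each case the underlying mechanism is classical. For part~(i), I would begin from the characterization $x\in J_{\gamma A}(y)\iff y-x\in\gamma A(x)$. Given $x_i\in J_{\gamma A}(y_i)$ for $i=1,2$, applying $\sigma$-monotonicity with $\sigma=0$ (\cref{defn:genmonotone}) to the pairs $(x_i,\tfrac1\gamma(y_i-x_i))\in\gra(A)$ yields $\inner{x_1-x_2}{(y_1-x_1)-(y_2-x_2)}\ge 0$, equivalently
\[
\inner{y_1-y_2}{x_1-x_2}\ge\norm{x_1-x_2}^2 .
\]
This single inequality does everything at once: taking $y_1=y_2$ shows $J_{\gamma A}$ is single-valued; the right-hand side being nonnegative shows $J_{\gamma A}$ is monotone; and Cauchy--Schwarz gives $\norm{x_1-x_2}\le\norm{y_1-y_2}$, i.e.\ (firm) nonexpansiveness.

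For part~(ii), the reverse implication is an elementary maximality argument: if $\ran(\Id+\gamma A)=\H$ and a pair $(x,u)$ is monotonically related to all of $\gra(A)$, pick $(z,w)\in\gra(A)$ with $x+\gamma u=z+\gamma w$; then $\inner{x-z}{u-w}=-\tfrac1\gamma\norm{x-z}^2\le 0$ forces $x=z$, hence $u=w$, so $(x,u)\in\gra(A)$ and $A$ is maximal monotone. The forward implication is Minty's theorem, which I would invoke directly; its proof rests on a nontrivial fixed-point/surjectivity argument and is the one genuinely non-elementary ingredient in~(i)--(ii). Part~(iii) is then immediate from maximality: for fixed $x$, the set $\{u:\inner{x-y}{u-v}\ge 0 \ \forall (y,v)\in\gra(A)\}$ is an intersection of closed half-spaces, hence closed and convex, and by maximal monotonicity it coincides with $A(x)$. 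Finally, part~(iv) is Rockafellar's sum theorem under the constraint qualification $\interior(\dom(A))\cap\dom(B)\ne\emptyset$; I would cite it rather than reprove it, since its proof relies on Fenchel--Rockafellar duality and deeper convex-analytic machinery.

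In summary, the entire lemma is textbook material — items~(i)--(iv) correspond to standard results in \cite{Bauschke2017} — so the ``proof'' amounts to assembling these references together with the one-line reminders above. The only steps that are not genuinely elementary, and which I would therefore not attempt to reprove, are the surjectivity half of~(ii) (Minty's theorem) and the sum theorem in~(iv) (Rockafellar); these are exactly where I expect any real difficulty to lie, and both are well-documented in the literature.
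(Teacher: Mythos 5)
Your proposal is correct and takes essentially the same route as the paper, which simply cites \cite[Propositions 23.8 and 20.36]{Bauschke2017} for (i)--(iii) and Rockafellar's sum theorem \cite{Rockafellar1970} for (iv). Your elementary verifications (the firm-nonexpansiveness inequality for (i), the surjectivity-implies-maximality direction of (ii), and the intersection-of-half-spaces argument for (iii)) are all sound, and deferring to Minty's theorem and the sum theorem for the genuinely deep steps matches exactly where the paper defers to the literature.
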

\begin{proof}
    Parts (i) and (ii) are from \cite[Proposition 23.8]{Bauschke2017}, while (iii) holds by \cite[Proposition 20.36]{Bauschke2017}). 
    Part (iv) follows from \cite[Theorem 1 and Theorem 2]{Rockafellar1970}.
\smartqedmark \end{proof}


We note that $\sigma$-monotone and $\sigma$-comonotone operators have close relationships with maximal monotone operators, which are summarized in the following lemmas. 
\begin{lemma}
\label{lemma:maximal_sigma_equivalent}
   Let $A:\H\toset \H$ and let $\sigma\in \Re$. 
   The following are equivalent:
   \begin{enumerate}[(i)]
       \item $A$ is $\sigma$-monotone.
       \item $A-\sigma \Id$ is  monotone.
       \item $A^{-1}$ is $\sigma$-comonotone.
   \end{enumerate}
Moreover, the following are equivalent:
   \begin{enumerate}[(i)]
       \item $A$ is maximal $\sigma$-monotone.
       \item $A-\sigma \Id$ is maximal monotone.
       \item $A^{-1}$ is maximal $\sigma$-comonotone.
   \end{enumerate}
\end{lemma}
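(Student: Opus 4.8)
The plan is to prove the two chains of equivalences in \cref{lemma:maximal_sigma_equivalent} by first settling the three non-maximal statements and then upgrading each to its maximal counterpart. For the first chain, the equivalence (i)$\Leftrightarrow$(ii) is immediate from the definitions: writing $A - \sigma\Id$ pointwise, a pair $(x, u-\sigma x)$ lies in $\gra(A-\sigma\Id)$ iff $(x,u)\in\gra(A)$, and the monotonicity inequality $\inner{x-y}{(u-\sigma x)-(v-\sigma y)}\geq 0$ expands to $\inner{x-y}{u-v} \geq \sigma\norm{x-y}^2$, which is exactly $\sigma$-monotonicity of $A$. The equivalence (i)$\Leftrightarrow$(iii) is just a relabeling: $(x,u),(y,v)\in\gra(A)$ iff $(u,x),(v,y)\in\gra(A^{-1})$, and the $\sigma$-monotone inequality for $A$, namely $\inner{x-y}{u-v}\geq\sigma\norm{x-y}^2$, read with the roles of the two coordinates swapped, is precisely the $\sigma$-comonotone inequality for $A^{-1}$.

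Next I would treat the maximal versions. The cleanest route is to observe that all three properties in the first (non-maximal) chain are preserved under the graph operations $A\mapsto A-\sigma\Id$ and $A\mapsto A^{-1}$, and that these operations set up order-isomorphisms on graphs with respect to inclusion. Concretely, $B\mapsto B+\sigma\Id$ and $B\mapsto B^{-1}$ are inclusion-preserving bijections between the relevant classes of operators; so $\gra(A)$ is maximal among graphs of $\sigma$-monotone operators iff $\gra(A-\sigma\Id)$ is maximal among graphs of monotone operators iff $\gra(A^{-1})$ is maximal among graphs of $\sigma$-comonotone operators. To make this rigorous I would spell out the characterization in \cref{defn:genmonotone}(iii)--(iv): e.g.\ $A$ is maximal $\sigma$-monotone iff for all $(x,u)$, the implication ``$\inner{x-y}{u-v}\geq\sigma\norm{x-y}^2$ for all $(y,v)\in\gra(A)$'' forces $(x,u)\in\gra(A)$; substituting $u' = u-\sigma x$, $v'=v-\sigma y$ turns this into the characterization of maximal monotonicity of $A-\sigma\Id$, and swapping coordinates turns it into the characterization of maximal $\sigma$-comonotonicity of $A^{-1}$.

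I don't expect a genuine obstacle here — the whole lemma is bookkeeping with the defining inequalities. The one point requiring a little care is making sure the substitution/relabeling arguments respect the \emph{quantifier structure} in the maximality definitions: maximality is a statement about \emph{all} candidate pairs $(x,u)\in\H\times\H$, not just those already in the graph, so I must check that the bijections $(x,u)\mapsto(x,u-\sigma x)$ and $(x,u)\mapsto(u,x)$ are bijections of $\H\times\H$ onto itself (which they plainly are, the first being affine and invertible, the second being the flip), so that ``no proper $\sigma$-monotone extension exists'' transports correctly to ``no proper monotone extension of $A-\sigma\Id$ exists'' and to ``no proper $\sigma$-comonotone extension of $A^{-1}$ exists.'' Once that is noted, the two equivalence chains follow by combining the non-maximal equivalences with the observation that the maximality clause is invariant under these two graph bijections. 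I would present this compactly, proving (i)$\Leftrightarrow$(ii) and (i)$\Leftrightarrow$(iii) in each chain and remarking that the maximal case is obtained by applying the same transformations at the level of the characterizing implications.
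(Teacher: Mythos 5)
Your proposal is correct. Note that the paper does not prove this lemma at all: it simply cites \cite[Lemmas 2.6 and 2.8]{Bauschke2021}, so there is no in-paper argument to compare against; your write-up supplies a self-contained proof of what the paper outsources. Your two graph-level bijections of $\H\times\H$, namely $(x,u)\mapsto(x,u-\sigma x)$ and $(x,u)\mapsto(u,x)$, do exactly the right work: they carry the class of $\sigma$-monotone operators onto the monotone and $\sigma$-comonotone classes respectively, they preserve graph inclusion in both directions, and hence they transport maximality as well as the pointwise inequalities. Your explicit remark about the quantifier structure in \cref{defn:genmonotone}(iii)--(iv) — that the maximality clause ranges over all of $\H\times\H$ and that both maps are bijections of $\H\times\H$ onto itself — is precisely the point that needs checking, and with it the argument is complete. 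The only presentational suggestion is to state once, explicitly, that $B$ is $\sigma$-monotone if and only if $B-\sigma\Id$ is monotone if and only if $B^{-1}$ is $\sigma$-comonotone \emph{for every} operator $B$ (not just for $A$), since that universal form is what you invoke when ruling out proper extensions in the maximal case.
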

\begin{proof}
    See \cite[Lemmas 2.6 and 2.8]{Bauschke2021}.
\smartqedmark \end{proof}

The following are some properties of resolvents of $\sigma$-monotone and $\sigma$-comonotone operators.

\begin{lemma}
\label{lemma:maximal_single-valued-resolvent-monotone}
    Let $A:\H \toset \H$ be $\sigma$-monotone, and let $\gamma>0$ such that $1+\gamma \sigma>0$. Then the following hold:
    \begin{enumerate}[(i)]
        \item $J_{\gamma A}$ is at most single-valued.
        \item $\dom (J_{\gamma A})= \H$ if and only if $A$ is maximal $\sigma$-monotone.
    \end{enumerate}
\end{lemma}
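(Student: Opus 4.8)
The plan is to reduce both assertions to the corresponding facts for maximal monotone operators (\cref{lemma:maximalmonotone_properties}) by combining \cref{lemma:maximal_sigma_equivalent} with a simple rescaling of the resolvent. First I would set $B \coloneqq A - \sigma\Id$, which is monotone by \cref{lemma:maximal_sigma_equivalent}, and put $\mu \coloneqq 1 + \gamma\sigma > 0$. For every $y\in\H$ one has $(\Id + \gamma A)(y) = y + \gamma B(y) + \gamma\sigma y = \mu\bigl(y + \tfrac{\gamma}{\mu} B(y)\bigr)$, so as set-valued operators $\Id + \gamma A = (\mu\Id)\circ\bigl(\Id + \tfrac{\gamma}{\mu}B\bigr)$. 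Taking inverses gives the key identity
\[
    J_{\gamma A} = J_{\frac{\gamma}{\mu} B}\circ\Bigl(\tfrac{1}{\mu}\Id\Bigr).
\]
Since $\tfrac{1}{\mu}\Id\colon\H\to\H$ is a bijection with $\tfrac{\gamma}{\mu}>0$, this identity transfers both single-valuedness and domain information from $J_{\frac{\gamma}{\mu}B}$, which is a resolvent of a monotone operator, to $J_{\gamma A}$.

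For part~(i), I would note that $B$ is monotone and $\tfrac{\gamma}{\mu}>0$, so \cref{lemma:maximalmonotone_properties}(i) shows that $J_{\frac{\gamma}{\mu}B}$ is at most single-valued; precomposing with the single-valued map $\tfrac{1}{\mu}\Id$ preserves this, so $J_{\gamma A}$ is at most single-valued. If a self-contained argument is preferred, single-valuedness of $J_{\frac{\gamma}{\mu}B}$ follows directly: if $y_1,y_2\in J_{\frac{\gamma}{\mu}B}(x)$ then $\tfrac{\mu}{\gamma}(x-y_i)\in B(y_i)$ for $i=1,2$, and monotonicity of $B$ forces $-\tfrac{\mu}{\gamma}\norm{y_1-y_2}^2\ge 0$, hence $y_1=y_2$.

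For part~(ii), using the key identity and the bijectivity of $\tfrac{1}{\mu}\Id$, I would observe $\dom(J_{\gamma A}) = \mu\cdot\dom\bigl(J_{\frac{\gamma}{\mu}B}\bigr)$, so $\dom(J_{\gamma A})=\H$ if and only if $\dom\bigl(J_{\frac{\gamma}{\mu}B}\bigr)=\H$. By \cref{lemma:maximalmonotone_properties}(ii) the latter is equivalent to $B$ being maximal monotone, and by \cref{lemma:maximal_sigma_equivalent} the operator $B=A-\sigma\Id$ is maximal monotone if and only if $A$ is maximal $\sigma$-monotone. Chaining these equivalences yields the claim.

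The only mildly delicate point is establishing the rescaled-resolvent identity and checking that the domain passes cleanly through the invertible change of variables $\tfrac{1}{\mu}\Id$; everything else is a direct invocation of the already-established lemmas on (maximal) monotone operators, so I do not anticipate any real obstacle beyond this bookkeeping.
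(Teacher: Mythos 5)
Your argument is correct. Note, however, that the paper does not actually prove this lemma in-house: it simply cites \cite[Proposition 3.4]{DaoPhan2019}. What you have produced is a self-contained replacement for that citation, and it holds up: the factorization $\Id+\gamma A=(\mu\Id)\circ\bigl(\Id+\tfrac{\gamma}{\mu}B\bigr)$ with $B=A-\sigma\Id$ and $\mu=1+\gamma\sigma>0$ is valid as an identity of set-valued operators (the pointwise computation $y+\gamma A(y)=\mu\bigl(y+\tfrac{\gamma}{\mu}B(y)\bigr)$ uses only the definition of $B$), and taking inverses gives $J_{\gamma A}=J_{\frac{\gamma}{\mu}B}\circ\tfrac{1}{\mu}\Id$. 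From there, single-valuedness transfers correctly (your direct monotonicity argument for $J_{\frac{\gamma}{\mu}B}$ is the cleanest route, since \cref{lemma:maximalmonotone_properties}(i) only asserts nonexpansiveness on the domain, which implicitly presupposes single-valuedness), and the domain bookkeeping $\dom(J_{\gamma A})=\mu\cdot\dom\bigl(J_{\frac{\gamma}{\mu}B}\bigr)$ combined with Minty's theorem (\cref{lemma:maximalmonotone_properties}(ii)) and \cref{lemma:maximal_sigma_equivalent} gives part (ii). This rescaling reduction is essentially the same mechanism used in the cited reference, so your proof is not a conceptually different theorem, but it has the benefit of making the paper's appeal to an external result verifiable from the two lemmas already stated in the preliminaries.
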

\begin{proof}
    See \cite[Proposition 3.4]{DaoPhan2019}
\smartqedmark \end{proof}

\begin{lemma}
\label{lemma:maximal_single-valued-resolvent-comonotone}
    Let $A:\H \toset \H$ be $\sigma$-comonotone, and let $\gamma>0$ such that $\gamma+ \sigma>0$. Then the following hold:
    \begin{enumerate}[(i)]
        \item $J_{\gamma A}$ is at most single-valued.
        \item $\dom (J_{\gamma A})= \H$ if and only if $A$ is maximal $\sigma$-comonotone.
    \end{enumerate}
\end{lemma}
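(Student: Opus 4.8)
The statement to prove is Lemma~\ref{lemma:maximal_single-valued-resolvent-comonotone}: for $A$ $\sigma$-comonotone with $\gamma + \sigma > 0$, the resolvent $J_{\gamma A}$ is at most single-valued, and $\dom(J_{\gamma A}) = \H$ iff $A$ is maximal $\sigma$-comonotone. The natural strategy is to reduce everything to the analogous (already-cited) statement for $\sigma$-monotone operators, Lemma~\ref{lemma:maximal_single-valued-resolvent-monotone}, by passing to the inverse $A^{-1}$ and using the resolvent--inverse identity~\cref{eq:resolvent_inverse}. This mirrors how Lemma~\ref{lemma:maximal_sigma_equivalent} trades $\sigma$-comonotonicity of $A$ for $\sigma$-monotonicity of $A^{-1}$.

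\medskip
\noindent\textbf{Step 1: Set up the inverse.} By Lemma~\ref{lemma:maximal_sigma_equivalent}, $A$ is $\sigma$-comonotone iff $A^{-1}$ is $\sigma$-monotone, and $A$ is maximal $\sigma$-comonotone iff $A^{-1}$ is maximal $\sigma$-monotone. Set $B \coloneqq A^{-1}$, so $B$ is $\sigma$-monotone. The hypothesis $\gamma + \sigma > 0$ should be rewritten, after scaling, as the hypothesis needed for Lemma~\ref{lemma:maximal_single-valued-resolvent-monotone} applied to $B$ with parameter $1/\gamma$: indeed $1 + \tfrac1\gamma \sigma = \tfrac1\gamma(\gamma + \sigma) > 0$. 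So Lemma~\ref{lemma:maximal_single-valued-resolvent-monotone} applies to $J_{\frac1\gamma B}$.

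\medskip
\noindent\textbf{Step 2: Relate the two resolvents.} By~\cref{eq:resolvent_inverse}, $\yosida{\gamma}A = \tfrac1\gamma(\Id - J_{\gamma A}) = J_{\frac1\gamma A^{-1}} \circ (\tfrac1\gamma \Id) = J_{\frac1\gamma B} \circ (\tfrac1\gamma \Id)$. Hence $J_{\gamma A} = \Id - \gamma\big(J_{\frac1\gamma B} \circ (\tfrac1\gamma\Id)\big)$. Since $x \mapsto \tfrac1\gamma x$ is a bijection of $\H$ with full domain, single-valuedness of $J_{\gamma A}$ at a point is equivalent to single-valuedness of $J_{\frac1\gamma B}$ at the corresponding scaled point, and $\dom(J_{\gamma A}) = \H$ iff $\dom(J_{\frac1\gamma B}) = \H$.

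\medskip
\noindent\textbf{Step 3: Conclude.} For (i): by Lemma~\ref{lemma:maximal_single-valued-resolvent-monotone}(i), $J_{\frac1\gamma B}$ is at most single-valued, so by Step 2 so is $J_{\gamma A}$. For (ii): by Lemma~\ref{lemma:maximal_single-valued-resolvent-monotone}(ii), $\dom(J_{\frac1\gamma B}) = \H$ iff $B$ is maximal $\sigma$-monotone; combining with Step 2 and the maximality equivalence from Step 1 ($B = A^{-1}$ maximal $\sigma$-monotone $\iff$ $A$ maximal $\sigma$-comonotone) gives $\dom(J_{\gamma A}) = \H$ iff $A$ is maximal $\sigma$-comonotone.

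\medskip
\noindent\textbf{Main obstacle.} There is no deep obstacle here; the proof is essentially a dictionary translation. The only point requiring care is bookkeeping the parameter scaling — making sure that the condition $\gamma + \sigma > 0$ correctly matches the hypothesis $1 + \gamma'\sigma > 0$ of the monotone lemma under the substitution $\gamma' = 1/\gamma$, and that the composition with $\tfrac1\gamma\Id$ in~\cref{eq:resolvent_inverse} genuinely preserves both single-valuedness and fullness of domain (it does, since it is an invertible affine self-map of $\H$). Alternatively, if one prefers a self-contained argument, one can argue directly: if $p, q \in J_{\gamma A}(x)$ then $\tfrac1\gamma(x - p) \in A(p)$ and $\tfrac1\gamma(x-q) \in A(q)$, and $\sigma$-comonotonicity gives $\langle p - q, \tfrac1\gamma(x-p) - \tfrac1\gamma(x-q)\rangle \geq \sigma\|\tfrac1\gamma(x-p) - \tfrac1\gamma(x-q)\|^2$, i.e. $-\tfrac1\gamma\|p-q\|^2 \geq \tfrac{\sigma}{\gamma^2}\|p - q\|^2$, whence $(\gamma+\sigma)\|p-q\|^2 \leq 0$ and $p = q$; the domain statement then follows the same reduction as above. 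I would present the inverse-reduction version for brevity, citing~\cref{eq:resolvent_inverse} and Lemmas~\ref{lemma:maximal_sigma_equivalent} and~\ref{lemma:maximal_single-valued-resolvent-monotone}.
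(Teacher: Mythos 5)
Your proposal is correct, but it takes a different route from the paper: the paper does not prove this lemma at all, it simply cites \cite[Proposition 3.7]{Bartz2021Conical}, just as it cites \cite[Proposition 3.4]{DaoPhan2019} for the monotone counterpart (\cref{lemma:maximal_single-valued-resolvent-monotone}). What you do instead is derive the comonotone statement from the monotone one inside the paper's own toolkit: pass to $B=A^{-1}$ via \cref{lemma:maximal_sigma_equivalent}, check that $\gamma+\sigma>0$ becomes $1+\tfrac{1}{\gamma}\sigma>0$, and transport single-valuedness and fullness of domain through the inverse-resolvent identity \cref{eq:resolvent_inverse}, noting that $J_{\gamma A}(x)=x-\gamma J_{\frac{1}{\gamma}A^{-1}}(x/\gamma)$ is an invertible affine correspondence. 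This is sound; the only point worth being explicit about is that \cref{eq:resolvent_inverse} is a purely graph-level identity valid for arbitrary set-valued operators (not just maximally monotone ones), which is exactly how the paper itself uses it elsewhere (e.g., in the proof of \cref{lemma:adr_primal_dual}), so the reduction has no hidden monotonicity assumption. Your direct computation for part (i) — two elements $p,q\in J_{\gamma A}(x)$ force $(\gamma+\sigma)\|p-q\|^2\le 0$ — is also correct and is essentially the standard argument behind the cited external result. In short, your version buys a self-contained in-paper proof (modulo the already-cited monotone lemma), whereas the paper's choice is simply to outsource the statement to the literature; mathematically there is no gap in what you wrote.
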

\begin{proof}
    See \cite[Proposition 3.7]{Bartz2021Conical}
\smartqedmark \end{proof}

We also mention some remarks about the convex-valuedness of maximal $\sigma$-monotone and $\sigma$-comonotone operators. Note that from \cref{defn:genmonotone}(iii), we may write 
\[A(x) = \bigcap_{(y,v)\in \gra (A)} \left\lbrace
 u\in \H: \inner{x-y}{u-v}\geq \sigma\norm{x-y}^2\right\rbrace \]
 for any $x\in \dom (A)$. Hence, $A(x)$ is the intersection of closed convex sets, and therefore $A(x)$ is convex. Hence, property (iii) in \cref{lemma:maximalmonotone_properties} holds for maximal $\sigma$-monotone operators. However, this is not true for maximal $\sigma$-comonotone operators. Indeed, from \cref{defn:genmonotone}(iv), we may write 
\[A(x) = \bigcap_{(y,v)\in \gra (A)} \left\lbrace
 u\in \H: \inner{x-y}{u-v}\geq \sigma\norm{u-v}^2\right\rbrace, \]
but the set  $\left\lbrace
 u\in \H: \inner{x-y}{u-v}\geq \sigma\norm{u-v}^2\right\rbrace$ is not necessarily convex when $\sigma<0$. A particular example is provided below.
 \begin{example}
    \label{ex:comonotone_nonconvexvalue}
    Let $f$ be the nonconvex function $f(x) = \sin (x)$. Note that $f(x) + \frac{1}{2}\norm{x}^2$ is a convex function, and therefore $f' + \Id$ is a maximal monotone operator. By \cref{lemma:maximal_sigma_equivalent}, it follows that 
    $A(x) \coloneqq (f')^{-1}(x) = \cos^{-1}(x) =\{ y : \cos (y) = x\}$ is a maximal $\sigma$-comonotone operator with $\sigma=-1$.  Moreover, $A(x)$ is clearly not convex-valued for any $x\in \dom (A) = [-1,1]$. 
 \end{example}

\subsection{Subdifferentials}
A function $f: \mathcal{H} \to (-\infty, +\infty]$ is said to be \emph{proper} if $\dom (f) \coloneqq \{ x\in \H : f(x) < \infty\}$ is nonempty, and it is \emph{closed} if it is lower semicontinuous. Given a proper and closed function $f$ and a point $x\in \dom (f)$, the \emph{regular subdifferential} of $f$ is defined 
\[
\widehat{\partial} f(x) \coloneqq \left\lbrace z\in \H : 
 \liminf_{y \to x} \frac{f(y) - f(x) - \langle z, y - x \rangle}{\|y - x\|} \geq 0 \right\rbrace .
\]
 The \emph{limiting subdifferential} of $f$ at $x $, is given by 
\[ \partial f(x) \coloneqq \left\lbrace z\in \H : 
  \exists x^k \to x ~\text{and}~z^k\in \widehat{\partial} f(x^k)~\text{with}~f(x^k)\to f(x) ~\text{and}~z^k\to z \right\rbrace .  \]
If $x \notin \operatorname{dom} f$, both $\widehat{\partial} f(x)$ and $\partial f(x)$ are defined to be empty. When $f$ is convex, the regular and limiting subdifferentials coincide and reduce to the classical subdifferential from convex analysis. In this case,
\[
\widehat{\partial} f(x) = \partial f(x) = \left\{ x^* \in \H : \forall y \in \mathcal{H}, \; f(y) \geq f(x) + \langle x^*, y - x \rangle \right\}.
\]

\subsection{The Adaptive Douglas--Rachford Algorithm for Two-Operator Inclusion}\label{sec:adaptiveDR}
Let $(\kappa, \gamma,\delta,\lambda,\mu)\in \Re^5_{++}$, and $A,B : \H \toset \H$ be set-valued operators. We recall from \cite{DaoPhan2019} that the \emph{adaptive Douglas--Rachford operator}  associated with $A,B$ and the parameters $(\kappa, \gamma,\delta,\lambda,\mu)$ is defined by 
    \begin{equation*}
        T_{A,B} \coloneqq (1-\kappa) \Id + \kappa J_{\delta B}^{\mu} J_{\gamma A}^{\lambda} .
    \end{equation*}
The \emph{adaptive DR algorithm} (abbreviated as aDR) is given by the iterations
    \begin{equation}
        x^{k+1} \in T_{A,B} (x^k), \quad \forall k\in \mathbb{N},
        \tag{aDR}
        \label{eq:aDR_algorithm}
    \end{equation}
which can also be expressed  as 
    \begin{subequations} \label{eq:adr_stepbystep}
            \begin{align}
             y^k & \in  J_{\gamma A}(x^k) \label{eq:adr_stepbystep_y} \\
            z^k & \in J_{\delta  B}((1-\lambda)x^k + \lambda y^k) \label{eq:adr_stepbystep_z}\\
            x^{k+1} & = x^k + \kappa \mu (z^k - y^k). \label{eq:adr_stepbystep_x}
            \end{align}
    \end{subequations}
We refer to $\{y^k\}$ as the shadow sequence. When $J_{\gamma A}$ is single-valued, we have 
    \begin{equation}
        T_{A,B} = \Id -\kappa \mu (J_{\delta B}J_{\gamma A}^{\lambda} - J_{\gamma A}).
        \label{eq:T_AB_singlevalued}
    \end{equation}
\ifdefined\submit
We summarize the convergence properties of~\eqref{eq:aDR_algorithm} for generalized monotone inclusions.
\else 
In \cite{DaoPhan2019}, the convergence of the aDR
sequence $\{x^k\}$ for generalized monotone inclusions was established using Fej\'{e}r monotonicity. Additionally, the strong convergence of the shadow sequences $\{y^k\}$ and $\{z^k\}$ was demonstrated. Later, in \cite{Bartz2021Conical}, weak convergence of the sequence \( \{x^k\} \) was proved under an assumption weaker than that used in \cite{DaoPhan2019}, employing the notion of conical averagedness. Subsequently, in \cite{Bartz2020Demiclosedness}, the weak convergence of the shadow sequence \( \{y^k\} \) was established. We summarize these results as follows.
\fi

\begin{lemma}[aDR for generalized monotone inclusions]
\label{lemma:aDR_monotonecase}
    Suppose that $A$ and $B$ are maximal $\alpha$-monotone and maximal $\beta$-monotone, respectively, with $\alpha+\beta\geq 0$ and $\zer (A+B) \neq \emptyset$. Let $(\gamma,\delta,\lambda,\mu)\in \Re^2_{++} \times (1,+\infty)^2$ such that 
    \begin{equation}
        (\lambda-1)(\mu-1)=1\quad \text{and} \quad \delta = \gamma (\lambda-1) ,
        \label{eq:parameters_basic_requirement}
    \end{equation}
    and 
    \begin{equation*}
        \begin{cases}
            \delta ( 1 + 2\gamma\alpha)=\gamma, &\text{if}~\alpha+\beta = 0, \\
            (\gamma+\delta)^2 < 4\gamma\delta(1+\gamma\alpha)(1+\delta\beta) & \text{if}~\alpha+\beta>0.
        \end{cases}
    \end{equation*}
    Set $\kappa \in (0,\kappa^*)$ where
    \begin{equation*}
        \kappa^* \coloneqq \begin{cases}
            1 & \text{if}~\alpha+\beta = 0, \\
            \frac{4\gamma\delta(1+\gamma\alpha)(1+\delta\beta) - (\gamma+\delta)^2}{2\gamma\delta(\gamma+\delta)(\alpha+\beta)} & \text{if}~\alpha+\beta > 0.
        \end{cases}
    \end{equation*}
    Let $\{(x^k,y^k,z^k)\}$ be generated by \eqref{eq:adr_stepbystep} for an arbitrary initial point $x^0 \in \H$. Then the following hold:
        \begin{enumerate}[(i)]
            \item There exists $\bar{x} \in \Fix (T_{A,B})$ with $J_{\gamma A}(\bar{x}) \in \zer (A+B)$ such that $x^k\toweak \bar{x}$;
            \item $\norm{x^k-x^{k+1}} = o(1/\sqrt{k})$ and $\norm{y^k-y^{k+1}} = o(1/\sqrt{k})$ as $k\to \infty$; 
            \item The shadow sequences $\{y^k \} $ and $\{z^k\}$ converge weakly to $J_{\gamma A}(\bar{x}) $; and
            \item If $\alpha + \beta >0$, $  0<\kappa <1 $, and either
            \begin{enumerate}[leftmargin=2.5em]
                \item $1 + 2 \gamma\alpha > 0 $ and $ \kappa^*\geq 1$; or 
                \item $\gamma=\delta>0$, $\lambda=\mu=2$, $\kappa^*>\kappa $,\footnote{Note that the condition $\kappa^*>\kappa$ is equivalent to having $1+\gamma\frac{\alpha \beta}{\alpha + \beta}>\kappa$.}
            \end{enumerate}
            then the shadow sequences $\{y^k \} $ and $\{z^k\}$ converge strongly to $J_{\gamma A}(\bar{x}) $ and $\zer (A+B) = \{ J_{\gamma A}(\bar{x}) \}$. 
        \end{enumerate}
\end{lemma}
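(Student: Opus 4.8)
The plan is to derive a single ``quasi-Fej\'er'' inequality for the adaptive DR operator $T_{A,B}$ and then read off the four conclusions from it. As a preliminary I would check that the parameter hypotheses force $1+\gamma\alpha>0$ and $1+\delta\beta>0$: when $\alpha+\beta=0$ this follows from $\delta(1+2\gamma\alpha)=\gamma$ together with $\delta=\gamma(\lambda-1)$ (which in fact pin down $1+\gamma\alpha=\tfrac1{2}(1+\tfrac1{\lambda-1})$ and $1+\delta\beta=\tfrac{\lambda}{2}$), while when $\alpha+\beta>0$ it follows from $(\gamma+\delta)^2<4\gamma\delta(1+\gamma\alpha)(1+\delta\beta)$ once one rules out that both factors are negative. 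Hence \cref{lemma:maximal_single-valued-resolvent-monotone} applies to $A$ and $B$, so $J_{\gamma A},J_{\delta B}$ are single-valued with full domain and \eqref{eq:adr_stepbystep} is well posed. Next I would show $\Fix(T_{A,B})\ne\emptyset$ via the explicit correspondence with $\zer(A+B)$: given $x^*\in\zer(A+B)$, choose $u^*\in A(x^*)$ with $-u^*\in B(x^*)$ and set $\bar x\coloneqq x^*+\gamma u^*$; using $\delta=\gamma(\lambda-1)$ and $(\lambda-1)(\mu-1)=1$ one verifies $J_{\gamma A}(\bar x)=x^*$ and $J_{\delta B}^{\mu}J_{\gamma A}^{\lambda}(\bar x)=\bar x$, so $\bar x\in\Fix(T_{A,B})=\Fix(J_{\delta B}^{\mu}J_{\gamma A}^{\lambda})$ and $J_{\gamma A}(\bar x)=x^*\in\zer(A+B)$.

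For the core estimate, fix an iterate and write $y=J_{\gamma A}(x)$, $w=(1-\lambda)x+\lambda y$, $z=J_{\delta B}(w)$, $x^+=x+\kappa\mu(z-y)$, and fix $\bar x\in\Fix(T_{A,B})$ with $\bar y=J_{\gamma A}(\bar x)$ (so that at $\bar x$ both the shadow step and the $B$-step return $\bar y$). Applying $\alpha$-monotonicity of $A$ to the pairs $(y,\gamma^{-1}(x-y))$ and $(\bar y,\gamma^{-1}(\bar x-\bar y))$, and $\beta$-monotonicity of $B$ to $(z,\delta^{-1}(w-z))$ and $(\bar y,\delta^{-1}((1-\lambda)\bar x+\lambda\bar y-\bar y))$, and then expanding with the polarization identity \eqref{eq:identity_squarednorm} while substituting the parameter relations $(\lambda-1)(\mu-1)=1$ and $\delta=\gamma(\lambda-1)$ --- which are exactly what makes the cross terms recombine --- yields an inequality of the form $\norm{x^+-\bar x}^2\le\norm{x-\bar x}^2-c_1\norm{x-x^+}^2-c_2\norm{y-\bar y}^2$. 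The case split in the hypotheses (the equality $\delta(1+2\gamma\alpha)=\gamma$ versus the quadratic inequality, together with $\kappa<\kappa^*$) is precisely what guarantees $c_1>0$, and this is the origin of the constant $\kappa^*$. Iterating gives Fej\'er monotonicity of $\{x^k\}$ with respect to $\Fix(T_{A,B})$ and the summability $\sum_k\norm{x^k-x^{k+1}}^2<\infty$; a parallel computation controls $\sum_k\norm{y^k-y^{k+1}}^2$.

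The remaining parts then follow. (i): $\{x^k\}$ is bounded and $x^k-x^{k+1}\to0$; a demiclosedness principle for $T_{A,B}$ --- argued directly when $\alpha+\beta=0$ and via conical averagedness (cf.\ \cite{Bartz2021Conical}) when $\alpha+\beta>0$ --- shows every weak cluster point lies in $\Fix(T_{A,B})$, so Opial's lemma yields $x^k\toweak\bar x$ for a single fixed point, with $J_{\gamma A}(\bar x)\in\zer(A+B)$ by the correspondence above. (ii): combine the summability with the standard non-increase of the successive differences for such fixed-point iterations to upgrade to $k\norm{x^k-x^{k+1}}^2\to0$ and $k\norm{y^k-y^{k+1}}^2\to0$. (iv): under either extra hypothesis the coefficient $c_2$ above is strictly positive, so $\sum_k\norm{y^k-\bar y}^2<\infty$ forces $y^k\to\bar y$ strongly, and $z^k-y^k=\tfrac1{\kappa\mu}(x^{k+1}-x^k)\to0$ gives $z^k\to\bar y$ as well; finally $A+B$ is $(\alpha+\beta)$-monotone with $\alpha+\beta>0$, hence strongly monotone, so $\zer(A+B)=\{\bar y\}$.

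I expect the genuine obstacle to be (iii), the weak convergence of the shadow sequence in the merely monotone regime, since $J_{\gamma A}$ is not weakly sequentially continuous, so one cannot simply pass to the limit in $y^k=J_{\gamma A}(x^k)$. Following \cite{Bartz2020Demiclosedness}, I would take a subsequence $y^{k_j}\toweak p$, exploit $(y^{k_j},\gamma^{-1}(x^{k_j}-y^{k_j}))\in\gra A$ and $(z^{k_j},\delta^{-1}(w^{k_j}-z^{k_j}))\in\gra B$ together with $x^{k_j}\toweak\bar x$, $y^{k_j}-y^{k_j+1}\to0$, and $z^{k_j}-y^{k_j}\to0$ (so $z^{k_j}\toweak p$), and feed these into a demiclosedness argument using maximal monotonicity of the shifts $A-\alpha\Id$ and $B-\beta\Id$ (\cref{lemma:maximal_sigma_equivalent}) to conclude $p=J_{\gamma A}(\bar x)=\bar y$; since every weak cluster point of $\{y^k\}$ (and of $\{z^k\}$) equals $\bar y$, weak convergence follows. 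This step, and the exact determination of when $c_1$ and $c_2$ are positive (the source of $\kappa^*$ and of the two alternatives in (iv)), is where the careful parameter bookkeeping of \cite{DaoPhan2019,Bartz2021Conical,Bartz2020Demiclosedness} enters; everything else reduces to algebra around the identity \eqref{eq:identity_squarednorm}.
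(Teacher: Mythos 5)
Your proposal is correct in substance and, at bottom, travels the same road as the paper: the paper proves this lemma purely by assembling the cited results --- part (i) and the $x$-rate in (ii) from \cite[Theorem 5.7]{Bartz2021Conical}, the $y$-rate from Lipschitz continuity of $J_{\gamma A}$ (valid since $1+\gamma\alpha>0$) via \cite[Lemma 3.3]{DaoPhan2019}, part (iii) from \cite[Theorem 4.1]{Bartz2020Demiclosedness} together with $z^k-y^k\to 0$, and part (iv) from \cite[Theorem 4.5]{DaoPhan2019} and \cite[Remark 5.8]{Bartz2021Conical} --- whereas you re-derive the skeleton (positivity of $1+\gamma\alpha$ and $1+\delta\beta$, the fixed-point correspondence between $\Fix(T_{A,B})$ and $\zer(A+B)$, the quasi-Fej\'er inequality, Opial's lemma, strong monotonicity of $A+B$ in case (iv)) and defer exactly the same quantitative bookkeeping (the constants $\kappa^*$, the positivity of $c_1$ and $c_2$, the shadow-sequence demiclosedness) to the same three references. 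Your preliminary computations and the construction $\bar{x}=x^*+\gamma u^*$ are all correct.

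Two small points deserve care. In (ii), the non-increase of the successive differences $\norm{y^k-y^{k+1}}$ is not a standard fact (the Lipschitz constant $\tfrac{1}{1+\gamma\alpha}$ of $J_{\gamma A}$ can exceed $1$ when $\alpha<0$); the clean route --- and the one the paper takes --- is simply $\norm{y^k-y^{k+1}}\le \tfrac{1}{1+\gamma\alpha}\norm{x^k-x^{k+1}}=o(1/\sqrt{k})$, so no separate summability or monotonicity argument for the $y$-differences is needed. In (iii), note that ordinary demiclosedness of the maximal monotone graphs $\gra(A-\alpha\Id)$ and $\gra(B-\beta\Id)$ does not close the argument, because along the subsequence both graph coordinates converge only weakly; what is genuinely required is the demiclosedness principle of \cite{Bartz2020Demiclosedness}, which you correctly invoke, but your wording should not suggest that maximal monotonicity of the shifted operators by itself suffices.
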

\begin{proof}
    Part (i) and the rate $\norm{x^k-x^{k+1}} = o(1/\sqrt{k})$ in (ii) directly follow from \cite[Theorem 5.7]{Bartz2021Conical}. To prove that $\norm{y^k-y^{k+1}}= o(1/\sqrt{k})$, note from the  proof of \cite[Theorem 5.7]{Bartz2021Conical} that $1+\gamma \alpha>0$ and so $J_{\gamma A}$ is Lipschitz continuous by \cite[Lemma 3.3]{DaoPhan2019}. Since $y^k = J_{\gamma A}(x^k)$ and $\norm{x^k-x^{k+1}}=o(1/\sqrt{k})$, the claimed rate follows.  
    The weak convergence of $\{y^k\}$ to $J_{\gamma A}(\bar{x}) $ holds by \cite[Theorem 4.1]{Bartz2020Demiclosedness}. Since $z^k-y^k\to 0$ by noting part (ii) and \eqref{eq:adr_stepbystep_x}, the weak convergence of $\{z^k\}$  to $J_{\gamma A}(\bar{x}) $ also holds. This proves part (iii). Lastly, (iv) follows from \cite[Theorem 4.5]{DaoPhan2019} and \cite[Remark 5.8]{Bartz2021Conical}. 
\smartqedmark \end{proof}
\begin{remark}
    For brevity, we mention without formal statements that convergence results also exist for comonotone inclusion problems. Specifically, Bartz et al.~\cite{Bartz2021Conical} introduced the concept of conical-averagedness to establish weak convergence of the sequence $\{x^k\}$ to a fixed point of $T_{A,B}$. In addition, Bartz et al.~\cite{Bartz2020Demiclosedness} showed weak convergence of the shadow sequence $\{y^k\}$ to a zero of $A+B$. In \cref{sec:attouchthera}, we will derive similar convergence results and further establish some stronger properties.
\end{remark}

\section{Douglas--Rachford for Two-Operator Comonotone Inclusion via Attouch--Th\'{e}ra Duality}\label{sec:attouchthera}
Consider the two-operator problem 
    \begin{equation}
        \text{Find~} x \in \H ~\text{such that } 0\in A(x) + B(x),
        \tag{P}
        \label{eq:P}
    \end{equation}
where $A,B:\H \toset \H$ are maximal $\alpha$-monotone and maximal $\beta$-comonotone operators on $\H$. We provide a simple convergence analysis of the adaptive DR algorithm \eqref{eq:aDR_algorithm} for solving the comonotone inclusion problem \eqref{eq:P}, relying only on the established convergence results in \cref{lemma:aDR_monotonecase} for the monotone case. To this end, we consider the \textit{Attouch--Th\'{e}ra dual} of \eqref{eq:P} \cite{AttouchThera1996}, which is given by the inclusion problem 
    \begin{equation*}
        \text{Find~} u \in \H ~\text{such that } 0\in -A^{-1}(-u) + B^{-1}(u).
    \end{equation*}
For simplicity, we denote
    \begin{equation}
        {A}'= -A^{-1}\circ (-\Id) \quad \text{and} \quad {B}'= B^{-1},
        \label{eq:AB_dual}
    \end{equation}
so that the dual problem can be written as 
    \begin{equation}
        \text{Find~} u \in \H ~\text{such that } 0\in  A'(u)+ B'(u).
        \tag{D}
        \label{eq:D}
    \end{equation}
Let us consider the adaptive DR algorithm (see \eqref{eq:aDR_algorithm}) for solving the dual problem \eqref{eq:D} with parameters $(\kappa, \gamma',\delta',\lambda',\mu')$, given by the iterations 
    \begin{equation}
        u^{k+1} \in T_{{A}',{B}'}(u^k) , \quad \forall k\in \mathbb{N},
        \tag{aDR'}
        \label{eq:aDR_algorithm_dual}
    \end{equation}
where $T_{{A}',{B}'} = (1-\kappa) \Id + \kappa J_{\gamma' {A}'}^{\lambda'}J_{\delta' {B}'}^{\mu'}.$
Equivalently, this can be written as
    \begin{subequations} \label{eq:adr_stepbystep_dual}
            \begin{align}
             v^k & \in  J_{\gamma' A'}(u^k) \label{eq:adr_stepbystep_dual_v}\\
            w^k & \in  J_{\delta'  B'}((1-\lambda')u^k + \lambda' v^k) \label{eq:adr_stepbystep_dual_w}\\
            u^{k+1} & = u^k + \kappa \mu' (w^k - v^k). \label{eq:adr_stepbystep_dual_u}
            \end{align}
    \end{subequations}
The following lemma demonstrates the relationship between the iterates of the aDR algorithms \eqref{eq:aDR_algorithm} and \eqref{eq:aDR_algorithm_dual} for the primal and dual problems. 
\begin{lemma}
\label{lemma:adr_primal_dual}
Suppose $J_{\gamma A}$ and $J_{\delta B}$ are single-valued with full domain. Let $(\kappa, \gamma,\delta,\lambda,\mu)\in \Re^5_{++}$ such that \eqref{eq:parameters_basic_requirement} holds.
Let $(\gamma',\delta',\lambda',\mu')\in \Re^4_{++}$ be given by 
    \begin{equation}
        \gamma' \coloneqq \frac{1}{\gamma}, \quad \delta' \coloneqq \frac{1}{\delta}, \quad \lambda' \coloneqq \frac{\lambda\gamma}{\delta}, \quad \mu' \coloneqq \frac{\mu\delta}{\gamma}.
        \label{eq:dual_parameters}
    \end{equation}
 Let $\{(x^k,y^k,z^k)\}$ be the sequence generated by \eqref{eq:adr_stepbystep} with initial point $x^0$, and $\{(u^k,v^k,w^k)\}$ be the sequence generated by \eqref{eq:adr_stepbystep_dual} with initial point $u^0 = -\frac{x^0}{\gamma}$. Then the following hold:
 \begin{enumerate}[(i)]
     \item We have the identity 
         \begin{equation}
        (\lambda'-1)(\mu'-1) = 1 \quad \text{and} \quad \delta' = \gamma' (\lambda' - 1). 
        \label{eq:dual_parameters_identity}
    \end{equation}
    \item For all $k\in \mathbb{N}$, we have 
        \begin{subequations}
        \label{eq:primaldual}
            \begin{align}
                x^k & = -\frac{u^k}{\gamma'} \label{eq:primaldual_x} \\ 
                y^k & = \frac{v^k-u^k}{\gamma'} \label{eq:primaldual_y}\\
                z^k & = \frac{(1-\lambda')u^k + \lambda' v^k - w^k}{\delta'}. \label{eq:primaldual_z}
            \end{align}
        \end{subequations}
    \item $\gamma \Fix (T_{A',B'}) = -\Fix (T_{A,B})$
    \item  $J_{\gamma' A'}(\Fix (T_{A',B'})) = \zer (A'+B')$ and $J_{\gamma A}(\Fix (T_{A,B})) = \zer (A+B)$
    \item $\zer (A'+B') = - \yosida{\gamma} A \left( \Fix (T_{A,B}) \right)$ and $\zer (A+B)= - \yosida{\gamma'}A' \left( \Fix (T_{A',B'}) \right)$
 \end{enumerate}
\end{lemma}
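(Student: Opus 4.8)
The plan is to verify the five claims in order, since each depends on the previous ones. The basic engine is the resolvent--inverse identity \eqref{eq:resolvent_inverse}, namely $\yosida{\gamma}A = \frac{1}{\gamma}(\Id - J_{\gamma A}) = J_{\frac{1}{\gamma}A^{-1}}\circ(\frac{1}{\gamma}\Id)$, together with the bookkeeping relations for the parameters in \eqref{eq:dual_parameters}.

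For part (i), I would simply substitute \eqref{eq:dual_parameters} into the two equations of \eqref{eq:dual_parameters_identity} and reduce them, using \eqref{eq:parameters_basic_requirement}, to the identities $(\lambda-1)(\mu-1)=1$ and $\delta=\gamma(\lambda-1)$; e.g.\ $(\lambda'-1)(\mu'-1) = (\tfrac{\lambda\gamma}{\delta}-1)(\tfrac{\mu\delta}{\gamma}-1)$ and one checks this equals $1$ precisely when the primal identities hold (using $\delta=\gamma(\lambda-1)$ to rewrite $\tfrac{\lambda\gamma}{\delta}-1 = \tfrac{\gamma}{\delta}\cdot\tfrac{1}{\lambda-1}\cdot\lambda\cdots$), and similarly $\delta' = \gamma'(\lambda'-1)$ becomes $\tfrac{1}{\delta} = \tfrac{1}{\gamma}(\tfrac{\lambda\gamma}{\delta}-1)$, i.e.\ $\gamma = \delta(\tfrac{\lambda\gamma}{\delta}-1) = \lambda\gamma - \delta$, which is exactly $\delta = \gamma(\lambda-1)$.

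Part (ii) is the heart of the lemma and the step I expect to be the main obstacle: it is an induction on $k$, where the base case is the hypothesis $u^0 = -x^0/\gamma' = -\gamma x^0$ together with $x^0 = -u^0/\gamma'$ (consistent since $\gamma' = 1/\gamma$). Assuming \eqref{eq:primaldual} at step $k$, I would unfold one step of \eqref{eq:adr_stepbystep_dual} and translate each line back to the primal variables. The key sub-identity is the relation between $J_{\gamma' A'}$ and $\yosida{\gamma}A$ (hence $J_{\gamma A}$): since $A' = -A^{-1}\circ(-\Id)$, one has $J_{\gamma' A'} = (\Id + \gamma' A')^{-1}$, and a short computation with \eqref{eq:resolvent_inverse} applied to $A$ shows that $v^k = J_{\gamma' A'}(u^k)$ is equivalent, via $u^k = -x^k/\gamma'$, to $y^k = J_{\gamma A}(x^k)$ with $v^k = u^k + \gamma' y^k$ — i.e.\ $(v^k - u^k)/\gamma' = y^k$, which is \eqref{eq:primaldual_y}. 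Similarly one identifies $w^k = J_{\delta' B'}(\cdots)$ with the $z^k$-update by applying \eqref{eq:resolvent_inverse} to $B$, obtaining \eqref{eq:primaldual_z}. Finally, plugging these into the $u$-update $u^{k+1} = u^k + \kappa\mu'(w^k - v^k)$ and using $\mu' = \mu\delta/\gamma$, $\delta' = 1/\delta$, $\gamma' = 1/\gamma$ one recovers $x^{k+1} = x^k + \kappa\mu(z^k - y^k)$ after expressing $w^k - v^k$ in terms of $z^k - y^k$ via \eqref{eq:primaldual_y}--\eqref{eq:primaldual_z}; this gives \eqref{eq:primaldual_x} at step $k+1$ and closes the induction. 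Care is needed with signs and with checking the argument $(1-\lambda')u^k + \lambda' v^k$ of $J_{\delta' B'}$ matches $\delta'$ times the relaxed input appearing in \eqref{eq:adr_stepbystep_z}.

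Parts (iii)--(v) then follow formally. For (iii), a point $\bar u$ is fixed by $T_{A',B'}$ iff (using the single-valuedness and \eqref{eq:T_AB_singlevalued}-type expressions) the corresponding $\bar x = -\bar u/\gamma'$ is fixed by $T_{A,B}$; concretely, run part (ii) from a fixed point, where the sequences are constant, to read off $\gamma\Fix(T_{A',B'}) = -\Fix(T_{A,B})$ (note $\gamma = 1/\gamma'$). For (iv), the primal statement $J_{\gamma A}(\Fix(T_{A,B})) = \zer(A+B)$ is already part of the standing theory (it is the content of \cref{lemma:aDR_monotonecase}(i) read at a fixed point, or can be shown directly: $\bar x = T_{A,B}\bar x$ forces $z = y = J_{\gamma A}(\bar x)$ in \eqref{eq:adr_stepbystep}, and then $0 \in A(y) + B(y)$); the dual statement is the same fact applied to the pair $(A',B')$, which is legitimate since $A'$ is maximal $\alpha'$-monotone and $B'$ maximal monotone for appropriate moduli by \cref{lemma:maximal_sigma_equivalent}. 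Finally (v) combines (iii) and (iv) with the resolvent identity once more: $\zer(A'+B') = J_{\gamma' A'}(\Fix(T_{A',B'})) = J_{\gamma' A'}(-\tfrac{1}{\gamma}\Fix(T_{A,B})) = -\yosida{\gamma}A(\Fix(T_{A,B}))$, where the last equality is exactly \eqref{eq:resolvent_inverse} in the form $J_{\gamma' A'} = J_{\frac1\gamma (-A^{-1}\circ(-\Id))}$ acting after scaling — one checks $J_{\gamma'A'}\circ(-\tfrac1\gamma\Id) = -\yosida{\gamma}A$ by the same manipulation used in part (ii); the second identity in (v) is symmetric.
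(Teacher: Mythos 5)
Your proposal is correct and follows essentially the same route as the paper's proof: part (i) by direct substitution of \eqref{eq:dual_parameters} into \eqref{eq:parameters_basic_requirement}, part (ii) by induction on $k$ driven by the resolvent--inverse identity \eqref{eq:resolvent_inverse} (which is exactly how the paper derives \eqref{eq:resolvent_Aprime}), part (iii) by running the correspondence of part (ii) from a constant (fixed-point) sequence, part (iv) via the standing fixed-point characterization (the paper cites \cite[Lemma 4.1(iii)]{DaoPhan2019}; note \cref{lemma:aDR_monotonecase}(i) alone would only give one inclusion, but your direct sketch completes easily), and part (v) by combining (iii)--(iv) with \eqref{eq:resolvent_inverse}. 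No substantive difference from the paper's argument.
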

\begin{proof}
We have from \eqref{eq:parameters_basic_requirement} that $\frac{1}{\gamma} = \frac{\lambda}{\delta} - \frac{1}{\delta}$, and so
\begin{align*}
    \gamma'(\lambda'-1) = \frac{1}{\gamma} \left( \frac{\lambda\gamma}{\delta} - 1\right) = \frac{\lambda}{\delta} - \frac{1}{\gamma} = \frac{1}{\delta} = \delta'.
\end{align*}
We also have from  \eqref{eq:parameters_basic_requirement} that $\frac{\gamma}{\delta} = \frac{1}{\lambda-1}$ and $(\mu-1)\delta = \gamma$. Thus, 
\ifdefined\submit 
\begin{align*}
    (\lambda'-1)(\mu'-1) & = \left( \frac{\lambda \gamma}{\delta} - 1\right) \left( \frac{\mu\delta}{\gamma}-1\right)= \frac{1}{(\lambda-1)(\mu-1)}.
\end{align*}
\else 
\begin{align*}
    (\lambda'-1)(\mu'-1) & = \left( \frac{\lambda \gamma}{\delta} - 1\right) \left( \frac{\mu\delta}{\gamma}-1\right) = \left( \frac{\lambda}{\lambda-1}-1 \right) \left( \frac{\mu}{\mu-1}-1\right) = \frac{1}{(\lambda-1)(\mu-1)}.
\end{align*}
\fi 
Together with \eqref{eq:parameters_basic_requirement}, we obtain \eqref{eq:dual_parameters_identity}. To prove (ii), first note that $x^0 = -\frac{u^0}{\gamma'}$ by hypothesis, and therefore \eqref{eq:primaldual_x} holds when $k=0$. Meanwhile, we have from the formulas $A' = -A^{-1} \circ (-\Id)$ and \eqref{eq:resolvent_inverse} that
    \begin{equation}
        J_{\gamma' A'} (u) = u + \gamma' J_{\frac{1}{\gamma'}A}  \left( -\frac{u}{\gamma'}\right) \quad \forall u\in \H.
        \label{eq:resolvent_Aprime}
    \end{equation}
Hence, 
    \begin{align}
        v^0 & \overset{\eqref{eq:adr_stepbystep_dual_v}}{=} J_{\gamma' A'}(u^0) \overset{\eqref{eq:resolvent_inverse}}{=} u^0 + \gamma' J_{\frac{1}{\gamma'}A}\left( -\frac{u^0}{\gamma'}\right) = u^0 + \gamma' y^0,
        \label{eq:v0}
    \end{align}
where the last equality holds by the definition of $x^0$, \eqref{eq:adr_stepbystep_y}, and \eqref{eq:dual_parameters}. It follows that \eqref{eq:primaldual_y} holds for $k=0$. We also have 
    \begin{align}
        w^0  = J_{\delta' B'}(u^0) &\overset{\eqref{eq:resolvent_inverse}}{=} (1-\lambda')u^0 + \lambda' v^0 - \delta' J_{\frac{1}{\delta'}B}\left(\frac{(1-\lambda')u^0 + \lambda' v^0 }{\delta'} \right)\notag
    \end{align}
and therefore 
    \begin{align}
        \frac{(1-\lambda')u^0 + \lambda' v^0  - w^0}{\delta'} &=J_{\delta B}\left(\frac{\gamma' (\lambda'-1)x^0 + \lambda' v^0 }{\delta'} \right) \notag \\
         &= J_{\delta B}\left(\frac{\delta' x^0 + \lambda' (-\gamma' x^0+\gamma' y^0) }{\delta'} \right) \notag \\
        & = J_{\delta_B} \left( \left( 1-\frac{\lambda'\gamma'}{\delta'}\right)x^0 + \frac{\lambda'\gamma'}{\delta'}y^0\right) \notag \\
        & \overset{\eqref{eq:dual_parameters}}{=} J_{\delta_B} \left( \left( 1-\lambda\right)x^0 + \lambda y^0\right) \overset{\eqref{eq:adr_stepbystep_z}}{=} z^0, \notag 
        \end{align}
where we used $u^0 = -\gamma' x^0$ in the first and third equalities, and equations \eqref{eq:dual_parameters_identity} and \eqref{eq:v0} for the second. This proves that \eqref{eq:primaldual_z} holds for $k=0$. By induction, suppose \eqref{eq:primaldual} holds for some $k\geq 0$. Subtracting \eqref{eq:primaldual_z} from \eqref{eq:primaldual_y}, we get
\ifdefined\submit
    \[z^k - y^k = \frac{\gamma' (1-\lambda')u^k + \gamma' \lambda' v^k - \gamma' w^k- \delta' (v^k-u^k)}{\delta' \gamma' } \overset{\eqref{eq:dual_parameters_identity}}{=} \frac{1}{\delta'}(v^k-w^k).\]
\else
    \[z^k - y^k = \frac{\gamma' (1-\lambda')u^k + \gamma' \lambda' v^k - \gamma' w^k- \delta' (v^k-u^k)}{\delta' \gamma' } \overset{\eqref{eq:dual_parameters_identity}}{=} \frac{(\gamma' \lambda' -\delta') v^k - \gamma' w^k}{\delta'\gamma ' }\overset{\eqref{eq:dual_parameters_identity}}{=} \frac{1}{\delta'}(v^k-w^k).\]
\fi 
Then
\ifdefined\submit
\begin{align*}
    x^{k+1} \overset{\eqref{eq:adr_stepbystep_x}}{=} x^k + \frac{\kappa\mu}{\delta'}(v^k-w^k) & \overset{\eqref{eq:primaldual_x}}{=} -\frac{u^k}{\gamma'} + \frac{\kappa\mu}{\delta'}(v^k-w^k) \\ 
    & \overset{\eqref{eq:adr_stepbystep_dual_u}}{=} -\frac{u^k}{\gamma'} + \frac{\mu }{\mu'\delta'
    }(u^k-u^{k+1}) \overset{\eqref{eq:dual_parameters}}{=} -\frac{u^{k+1}}{\gamma'}.
\end{align*}
\else
  \[x^{k+1} \overset{\eqref{eq:adr_stepbystep_x}}{=} x^k + \frac{\kappa\mu}{\delta'}(v^k-w^k) \overset{\eqref{eq:primaldual_x}}{=} -\frac{u^k}{\gamma'} + \frac{\kappa\mu}{\delta'}(v^k-w^k) \overset{\eqref{eq:adr_stepbystep_dual_u}}{=} -\frac{u^k}{\gamma'} + \frac{\mu }{\mu'\delta'
    }(u^k-u^{k+1})  \overset{\eqref{eq:dual_parameters}}{=} -\frac{u^{k+1}}{\gamma'}.\]
\fi 
Thus, \eqref{eq:primaldual_x} holds when $k$ is replaced with $k+1$. The proofs for \eqref{eq:primaldual_y} and \eqref{eq:primaldual_z}  proceed in a similar fashion. 

To prove part (iii), let $u\in \Fix (T_{A',B'})$ and set $u^0\coloneqq u$. If $u^1\coloneqq T_{A',B'}(u^0)$, then $u^0=u^1$. It follows from \eqref{eq:primaldual_x} that $x^0=x^1 = T_{A,B}(x^0)$ where $x^0=-\frac{u^0}{\gamma'}=-\gamma u^0$. That is, $x^0=-\gamma u^0\in \Fix (T_{A,B})$ and therefore  $\gamma \Fix(T_{A',B'}) \subseteq -\Fix (T_{A,B})$. The other inclusion holds by reversing the arguments. Part (iv) holds by using \cite[Lemma 4.1(iii)]{DaoPhan2019}, noting that \eqref{eq:parameters_basic_requirement} and \eqref{eq:dual_parameters_identity} hold. Using parts (iii) and (iv) together with the formulas \eqref{eq:resolvent_Aprime} and \eqref{eq:resolvent_inverse}, we obtain part (v).
\smartqedmark \end{proof}

Switching $A$ and $B$ in the aDR operator yields the following analogous result.
\begin{lemma}
\label{lemma:adr_primal_dual_switched}
Suppose that the hypotheses of \cref{lemma:adr_primal_dual} hold. Let $\{(x^k,y^k,z^k)\}$ be generated by 
            \begin{align*}
             y^k & =  J_{\delta B}(x^k) 
             \\
            z^k & = J_{\gamma  A}((1-\mu)x^k + \mu y^k) 
            \\
            x^{k+1} & = T_{B,A}(x^k) =  x^k + \kappa \lambda (z^k - y^k), 
            \end{align*}
with initial point $x^0$, and let $\{ (u^k,v^k,w^k)\}$ be generated by 
            \begin{align*}
             v^k & =  J_{\delta' B'}(u^k)
             \\
            w^k & = J_{\gamma'  A'}((1-\mu')u^k + \mu' v^k) 
            \\
            u^{k+1} & = T_{B',A'}(u^k) = u^k + \kappa \lambda' (w^k - v^k), 
            \end{align*}
with initial point $u^0 = \frac{x^0}{\delta}$. Then the following hold:
\begin{enumerate}[(i)]
    \item For all $k\in \mathbb{N}$, we have $x^k = \frac{u^k}{\delta'}$, $ y^k  = \frac{u^k-v^k}{\delta'}$, and $z^k  = \frac{w^k - (1-\mu')u^k - \mu' v^k}{\gamma'}.$
    \item $\delta \Fix (T_{B',A'}) = \Fix (T_{B,A})$ 
    \item  $J_{\delta' B'}(\Fix (T_{B',A'})) = \zer (A'+B')$ and $J_{\delta B}(\Fix (T_{B,A})) = \zer (A+B)$ 
    \item $\zer (A'+B') =  \yosida{\delta} B \left( \Fix (T_{B,A}) \right)$ and $\zer (A+B)= \yosida{\delta'} B' \left( \Fix (T_{B',A'}) \right)$ 
\end{enumerate}
\end{lemma}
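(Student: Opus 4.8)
The plan is to mirror the proof of \cref{lemma:adr_primal_dual}, carrying out the analogous computation with the roles of the two operators interchanged---so that $B$ is the ``first'' operator of the primal iteration and $B'$ the first of the dual iteration---together with the induced swap $(\gamma,\lambda)\leftrightarrow(\delta,\mu)$ of parameters. One cannot simply relabel $A\leftrightarrow B$ in \cref{lemma:adr_primal_dual}, since the Attouch--Th\'era dual is not symmetric in $A$ and $B$: here the first operator $B$ has dual $B'=B^{-1}$ \emph{without} a sign reversal, whereas in \cref{lemma:adr_primal_dual} the first operator $A$ has dual $A'=-A^{-1}\circ(-\Id)$. So the computation has to be redone, although it is of exactly the same nature.

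I would first record the parameter identities used throughout. From \eqref{eq:parameters_basic_requirement} one obtains $(\mu-1)(\lambda-1)=1$ and $\gamma=\delta(\mu-1)$ (equivalently $\mu-1=\gamma/\delta$), which is \eqref{eq:parameters_basic_requirement} for the primal parameter tuple in its swapped role; and, from the dual definitions \eqref{eq:dual_parameters} together with the identity \eqref{eq:dual_parameters_identity} already proved in \cref{lemma:adr_primal_dual}, the corresponding identities $(\mu'-1)(\lambda'-1)=1$ and $\gamma'=\delta'(\mu'-1)$ for the dual tuple. In particular $T_{B,A}$ and $T_{B',A'}$ are adaptive Douglas--Rachford operators (\cref{sec:adaptiveDR}) whose parameters satisfy the basic identity. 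I would also note the two resolvent formulas that drive the induction: since $B'=B^{-1}$, \eqref{eq:resolvent_inverse} gives $J_{\delta'B'}(u)=u-\delta'J_{\delta B}(\delta u)$ (using $1/\delta'=\delta$), while since $A'=-A^{-1}\circ(-\Id)$, \eqref{eq:resolvent_Aprime} gives $J_{\gamma'A'}(u)=u+\gamma'J_{\gamma A}(-\gamma u)$.

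The heart of the argument is part (i), proved by induction on $k$; the base case follows from $u^0=x^0/\delta=\delta'x^0$, i.e., $x^0=u^0/\delta'$. Assume $x^k=u^k/\delta'$, equivalently $u^k=\delta'x^k$. Applying the $J_{\delta'B'}$ formula to $u^k$ and noting $\delta u^k=x^k$ gives $v^k=u^k-\delta'J_{\delta B}(x^k)=u^k-\delta'y^k$, which is the second identity of (i). Substituting $u^k=\delta'x^k$ and $v^k=\delta'(x^k-y^k)$ into the argument of $J_{\gamma'A'}$ in the $w^k$-step and using $\gamma\delta'=\gamma/\delta=\mu-1$ and $\mu'=\mu\delta/\gamma$, one checks that $-\gamma((1-\mu')u^k+\mu'v^k)=(1-\mu)x^k+\mu y^k$; hence $w^k=(1-\mu')u^k+\mu'v^k+\gamma'z^k$, the third identity of (i). Finally, feeding these expressions into $u^{k+1}=u^k+\kappa\lambda'(w^k-v^k)$, dividing by $\delta'$, and using $\lambda'\gamma'/\delta'=\lambda$ and $\lambda'(1-\mu')=-\lambda$ (both immediate from \eqref{eq:dual_parameters} and $\mu-1=\gamma/\delta$), the right-hand side collapses to $x^k+\kappa\lambda(z^k-y^k)=x^{k+1}$, so $x^{k+1}=u^{k+1}/\delta'$; this closes the induction on the first identity, while the second and third identities have been obtained along the way at each step.

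Parts (ii)--(iv) then follow as in \cref{lemma:adr_primal_dual}. For (ii), taking $u^0=u\in\Fix(T_{B',A'})$ in the dual recursion gives $u^1=u^0$, hence $x^0=x^1=T_{B,A}(x^0)$ with $x^0=\delta u$ by (i), so $\delta\Fix(T_{B',A'})\subseteq\Fix(T_{B,A})$; the reverse inclusion comes from running the argument backwards. Part (iii) follows by applying the general identity \cite[Lemma 4.1(iii)]{DaoPhan2019} separately to the adaptive DR operators $T_{B,A}$ and $T_{B',A'}$, which is legitimate since their parameter identities hold. Part (iv) combines (ii), (iii), and \eqref{eq:resolvent_inverse}, which here read $\yosida{\delta}B=J_{\delta'B'}\circ(\delta'\Id)$ and $\yosida{\delta'}B'=J_{\delta B}\circ(\delta\Id)$; for example $\yosida{\delta}B(\Fix(T_{B,A}))=J_{\delta'B'}(\delta'\Fix(T_{B,A}))=J_{\delta'B'}(\Fix(T_{B',A'}))=\zer(A'+B')$, using $\delta'\delta=1$. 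I expect the only delicate point to be the bookkeeping in the induction for (i): one must keep careful track of which of $A'$, $B'$ carries the sign reversal and verify that all parameter relations line up so that the resolvent arguments and the update multipliers on the primal and dual sides agree. Everything else is a faithful transcription of the proof of \cref{lemma:adr_primal_dual}.
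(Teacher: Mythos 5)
Your proposal is correct and is exactly the argument the paper intends: the lemma is stated as the ``switched'' analogue of \cref{lemma:adr_primal_dual} with no written proof, and your induction—using $J_{\delta'B'}(u)=u-\delta'J_{\delta B}(\delta u)$, $J_{\gamma'A'}(u)=u+\gamma'J_{\gamma A}(-\gamma u)$, and the identities $\gamma\delta'=\mu-1$, $\gamma\mu'\delta'=\mu$, $\lambda'\gamma'/\delta'=\lambda$, $\lambda'(\mu'-1)=\lambda$—is the faithful transcription of that proof with the roles of the operators and of $(\gamma,\lambda)\leftrightarrow(\delta,\mu)$ interchanged, including the correct observation that only $A'$ carries the sign reversal (which is why $\delta\Fix(T_{B',A'})=\Fix(T_{B,A})$ appears without the minus sign of \cref{lemma:adr_primal_dual}(iii)). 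Parts (ii)--(iv) are handled exactly as in the paper's proof of \cref{lemma:adr_primal_dual}.
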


\cref{lemma:adr_primal_dual} and \cref{lemma:adr_primal_dual_switched} imply that any convergence result for the aDR algorithm applied to the dual problem \eqref{eq:D} can be directly translated into a corresponding convergence result for the primal problem \eqref{eq:P} (and vice versa) after an appropriate transformation of the parameters via \eqref{eq:dual_parameters}. As an application, we establish the convergence of aDR algorithm for comonotone inclusion operators using the existing theory for monotone inclusions.

\begin{theorem}[aDR for comonotone inclusion]
    \label{thm:aDR_comonotonecase}
    Suppose that $A$ and $B$ are maximal $\alpha$-comonotone and maximal $\beta$-comonotone, respectively, with $\alpha+\beta\geq 0$ and $\zer (A+B) \neq \emptyset$. Let $(\gamma,\delta,\lambda,\mu)\in \Re^2_{++} \times (1,+\infty)^2$ such that 
    \begin{equation}
        (\lambda-1)(\mu-1)=1\quad \text{and} \quad \delta = \gamma (\lambda-1) ,
        \label{eq:parameters_basic_requirement_comonotone}
    \end{equation}
    and 
    \begin{equation}
        \begin{cases}
            \delta = \gamma + 2\alpha, &\text{if}~\alpha+\beta = 0, \\
            (\gamma+\delta)^2 < 4(\gamma+\alpha)(\delta+\beta) & \text{if}~\alpha+\beta>0.
        \end{cases}
        \label{eq:parameters_additional_reqs_comonotone}
    \end{equation}
    Set $\kappa \in (0,\kappa^*)$ where
    \begin{equation}
        \label{eq:kappastar_comonotone}
        \kappa^* \coloneqq \begin{cases}
            1 & \text{if}~\alpha+\beta = 0, \\
            \frac{4(\gamma+\alpha)(\delta+\beta) - (\gamma+\delta)^2}{2(\gamma+\delta)(\alpha+\beta)} & \text{if}~\alpha+\beta > 0.
        \end{cases}
    \end{equation}
    Let $\{(x^k,y^k,z^k)\}$ be generated by \eqref{eq:adr_stepbystep} for an arbitrary initial point $x^0 \in \H$. Then the following hold:
        \begin{enumerate}[(i)]
            \item There exists $\bar{x} \in \Fix (T_{A,B})$ with $J_{\gamma A}(\bar{x}) \in \zer (A+B)$ such that $x^k\toweak \bar{x}$;
            \item $\norm{x^k-x^{k+1}} = o(1/\sqrt{k})$ and $\norm{y^k-y^{k+1}} = o(1/\sqrt{k})$ as $k\to \infty$; 
            \item The shadow sequences $\{y^k \} $ and $\{z^k\}$ converge weakly to $J_{\gamma A}(\bar{x}) $; and
            \item If $\alpha + \beta >0$, $  0<\kappa <1$ and
            either
            \begin{enumerate}[leftmargin=2.5em]
                \item $\gamma + 2 \alpha > 0 $ and $\kappa^*\geq 1$; or
                \item $\gamma=\delta>0$, $\lambda=\mu=2$, and $\kappa^*>\kappa$,\footnote{The condition that $\kappa^* > \kappa$ is equivalent to $1+\frac{1}{\gamma}\frac{\alpha\beta}{\alpha+\beta}>\kappa$.}
            \end{enumerate}
            then the sequences $\{x^k-y^k \} $ and $\{x^k-z^k\}$ converge strongly to $\gamma \yosida{\gamma }A(\bar{x}) = \bar{x}-J_{\gamma A}(\bar{x}) $ and $\yosida{\gamma}A(\bar{x}) + \zer (A+B) =  \Fix (T_{A,B})$. 
        \end{enumerate}
\end{theorem}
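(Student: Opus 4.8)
The plan is to reduce the entire statement to the already-established monotone theory of \cref{lemma:aDR_monotonecase} by passing to the Attouch--Th\'era dual \eqref{eq:D} and transporting the conclusions back through the primal--dual dictionary of \cref{lemma:adr_primal_dual}.

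\textbf{Step 1: the dual data is monotone, solvable, and the resolvent hypotheses hold.} Since $A$ is maximal $\alpha$-comonotone and $B$ is maximal $\beta$-comonotone, \cref{lemma:maximal_sigma_equivalent} gives that $A^{-1}$ is maximal $\alpha$-monotone and $B^{-1}$ is maximal $\beta$-monotone; reflecting a graph through the origin preserves the modulus, so $A'=-A^{-1}\circ(-\Id)$ and $B'=B^{-1}$ are maximal $\alpha$- and maximal $\beta$-monotone with $\alpha+\beta\geq 0$. If $x\in\zer(A+B)$, choosing $u\in A(x)$ with $-u\in B(x)$ yields $-x\in A'(-u)$ and $x\in B'(-u)$, hence $-u\in\zer(A'+B')\neq\emptyset$. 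From \eqref{eq:parameters_additional_reqs_comonotone} one also deduces $\gamma+\alpha>0$ and $\delta+\beta>0$: when $\alpha+\beta>0$ the strict inequality forces $(\gamma+\alpha)(\delta+\beta)>0$, and the two factors cannot both be negative, since then $\gamma+\delta+\alpha+\beta<0$ would contradict $\gamma,\delta>0$ and $\alpha+\beta\geq 0$; when $\alpha+\beta=0$, $\delta=\gamma+2\alpha>0$ gives $\gamma+\alpha=\delta+\beta=(\gamma+\delta)/2>0$. Thus \cref{lemma:maximal_single-valued-resolvent-comonotone} makes $J_{\gamma A}$ and $J_{\delta B}$ single-valued with full domain, so the standing hypothesis of \cref{lemma:adr_primal_dual} is met.

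\textbf{Step 2: the parameter change \eqref{eq:dual_parameters} is exactly the right one.} With $(\gamma',\delta',\lambda',\mu')$ defined by \eqref{eq:dual_parameters}, \cref{lemma:adr_primal_dual}(i) gives $(\lambda'-1)(\mu'-1)=1$ and $\delta'=\gamma'(\lambda'-1)$, and $\lambda',\mu'>1$ follows at once from $\lambda,\mu>1$. A direct substitution shows the comonotone conditions \eqref{eq:parameters_additional_reqs_comonotone} become precisely the monotone conditions of \cref{lemma:aDR_monotonecase} for the data $(\alpha,\beta,\gamma',\delta')$: for $\alpha+\beta=0$, $\delta=\gamma+2\alpha\Leftrightarrow\delta'(1+2\gamma'\alpha)=\gamma'$; for $\alpha+\beta>0$, $(\gamma+\delta)^2<4(\gamma+\alpha)(\delta+\beta)\Leftrightarrow(\gamma'+\delta')^2<4\gamma'\delta'(1+\gamma'\alpha)(1+\delta'\beta)$; and the threshold \eqref{eq:kappastar_comonotone} coincides with the $\kappa^*$ of \cref{lemma:aDR_monotonecase} for the primed data, so the same $\kappa\in(0,\kappa^*)$ is admissible. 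Likewise the two alternatives in part (iv) translate verbatim into those of \cref{lemma:aDR_monotonecase}(iv) for the primed data ($\gamma+2\alpha>0\Leftrightarrow 1+2\gamma'\alpha>0$, and $\gamma=\delta,\lambda=\mu=2\Leftrightarrow\gamma'=\delta',\lambda'=\mu'=2$). Hence \cref{lemma:aDR_monotonecase} applies to the iteration \eqref{eq:adr_stepbystep_dual} run with $A',B'$ and parameters $(\gamma',\delta',\lambda',\mu')$.

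\textbf{Step 3: run the dual algorithm, pull back, and identify the limits.} Let $\{(u^k,v^k,w^k)\}$ be generated by \eqref{eq:adr_stepbystep_dual} from $u^0=-x^0/\gamma$. \cref{lemma:aDR_monotonecase} furnishes $\bar u\in\Fix(T_{A',B'})$ with $J_{\gamma'A'}(\bar u)\in\zer(A'+B')$, $u^k\toweak\bar u$, $\|u^k-u^{k+1}\|=o(1/\sqrt k)$, $\|v^k-v^{k+1}\|=o(1/\sqrt k)$, and $v^k,w^k\toweak J_{\gamma'A'}(\bar u)$. By \eqref{eq:primaldual} together with $\gamma'=1/\gamma$, $\delta'=1/\delta$ one has $x^k=-\gamma u^k$, $y^k=\gamma(v^k-u^k)$, $x^k-y^k=-\gamma v^k$, and $z^k-y^k=\frac{1}{\kappa\mu}(x^{k+1}-x^k)$. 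Setting $\bar x:=-\gamma\bar u$, we get $x^k\toweak\bar x$, and \cref{lemma:adr_primal_dual}(iii)--(iv) give $\bar x\in\Fix(T_{A,B})$ with $J_{\gamma A}(\bar x)\in\zer(A+B)$, which is (i); (ii) follows from the triangle inequality applied to $y^k=\gamma(v^k-u^k)$. For (iii), since $J_{\gamma A}$ need not be weakly continuous, we pass to the limit in $y^k=\gamma(v^k-u^k)\toweak\gamma(J_{\gamma'A'}(\bar u)-\bar u)$ and identify this limit as $J_{\gamma A}(\bar x)$ via the resolvent identity $J_{\gamma'A'}(u)=u+\gamma'J_{\gamma A}(-u/\gamma')$ (a consequence of \eqref{eq:AB_dual} and \eqref{eq:resolvent_inverse}); then $z^k-y^k\to 0$ by (ii) gives $z^k\toweak J_{\gamma A}(\bar x)$. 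Finally, under the hypotheses of (iv), \cref{lemma:aDR_monotonecase}(iv) for the primed data gives $v^k,w^k\to J_{\gamma'A'}(\bar u)$ strongly and $\zer(A'+B')=\{J_{\gamma'A'}(\bar u)\}$; transporting this via $x^k-y^k=-\gamma v^k$, $z^k-y^k\to 0$, the same resolvent identity, and \cref{lemma:adr_primal_dual}(iv)--(v) yields the strong limit $\bar x-J_{\gamma A}(\bar x)=\gamma\yosida{\gamma}A(\bar x)$ for $\{x^k-y^k\}$ and $\{x^k-z^k\}$ and the stated description of $\Fix(T_{A,B})$. The only genuinely technical points are the parameter bookkeeping of Step 2 --- checking that \eqref{eq:dual_parameters} sends the comonotone constraints, the constant $\kappa^*$, and the case-(iv) alternatives \emph{exactly} to their monotone counterparts --- and, in (iii)--(iv), obtaining the weak and strong convergence of the shadow sequence on the dual side (where the relevant map is the resolvent of a monotone operator) rather than through an illegitimate appeal to weak continuity of $J_{\gamma A}$, and then transporting it back along the affine identities \eqref{eq:primaldual}.
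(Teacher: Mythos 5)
Your proposal is correct and follows essentially the same route as the paper's own proof: pass to the Attouch--Th\'era dual, verify that the parameter map \eqref{eq:dual_parameters} turns the comonotone conditions \eqref{eq:parameters_additional_reqs_comonotone}--\eqref{eq:kappastar_comonotone} into the monotone hypotheses of \cref{lemma:aDR_monotonecase}, and transport the conclusions back through \cref{lemma:adr_primal_dual} and the resolvent identity \eqref{eq:resolvent_Aprime}. Your only deviations are cosmetic improvements — checking $\zer(A'+B')\neq\emptyset$ directly rather than via \cref{lemma:adr_primal_dual}(iv)--(v), and explicitly verifying $\gamma+\alpha>0$, $\delta+\beta>0$ so that the single-valuedness/full-domain hypothesis of \cref{lemma:adr_primal_dual} holds — which the paper leaves implicit.
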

\begin{proof}
    Let $A'$ and $B'$ be given by \eqref{eq:AB_dual}, and let $(\gamma',\delta',\lambda',\mu')\in \Re^4_{++}$ be given by \eqref{eq:dual_parameters}. We use \cref{lemma:aDR_monotonecase} to establish the convergence of \eqref{eq:aDR_algorithm_dual} for the inclusion problem \eqref{eq:D}. To this end, we first prove that the following conditions hold:
        \begin{enumerate}[(C1)]
            \item $A'$ and $B'$ are maximal $\alpha$-monotone and maximal $\beta$-monotone with $\alpha+\beta \geq 0$;
            \item $\zer (A'+B') \neq \emptyset$;
            \item $(\gamma',\delta',\lambda',\mu')\in \Re^2_{++}\times (1,+\infty)^2$ satisfies
        \begin{equation}
        (\lambda'-1)(\mu'-1)=1\quad \text{and} \quad \delta' = \gamma' (\lambda'-1) ,
        \label{eq:parameters_basic_requirement_dual}
    \end{equation}
    and 
    \begin{equation}
        \begin{cases}
            \delta' ( 1 + 2\gamma'\alpha)=\gamma', &\text{if}~\alpha+\beta = 0, \\
            (\gamma'+\delta')^2 < 4\gamma'\delta'(1+\gamma'\alpha)(1+\delta'\beta) & \text{if}~\alpha+\beta>0.
        \end{cases}
        \label{eq:parameters_additional_reqs_dual}
    \end{equation}
    \item $\kappa \in (0,\kappa^*)$ where
    \begin{equation}
        \label{eq:kappastar_dual}
        \kappa^* =\begin{cases}
            1 & \text{if}~\alpha+\beta = 0, \\
            \frac{4\gamma'\delta'(1+\gamma'\alpha)(1+\delta'\beta) - (\gamma'+\delta')^2}{2\gamma'\delta'(\gamma'+\delta')(\alpha+\beta)} & \text{if}~\alpha+\beta > 0.
        \end{cases}
    \end{equation}
            \end{enumerate}
Part (C1) directly follows from the definition of comonotonicity and the definition of the operators $A'$ and $B'$ in \eqref{eq:AB_dual}; see also \cref{lemma:maximal_sigma_equivalent}. To prove (C2), we observe from the second equation in \cref{lemma:adr_primal_dual}(iv) that $\zer (A+B)\neq \emptyset$ implies that $\Fix (T_{A,B})\neq \emptyset$. Consequently, from the first equality in \cref{lemma:adr_primal_dual}(v), we see that $\zer(A'+B')\neq \emptyset$. Meanwhile, equation \eqref{eq:parameters_basic_requirement_dual} holds by \cref{lemma:adr_primal_dual}(i). Since $\gamma = \frac{1}{\gamma'}$ and $\delta = \frac{1}{\delta'}$ by \eqref{eq:dual_parameters}, we derive from \eqref{eq:parameters_additional_reqs_comonotone} and \eqref{eq:kappastar_comonotone} that \eqref{eq:parameters_additional_reqs_dual} and \eqref{eq:kappastar_dual} hold, respectively. Hence, parts (C3) and (C4) hold.

Now we invoke \cref{lemma:aDR_monotonecase}, from where we know that for any $\{(u^k,v^k,w^k)\}$ generated by \eqref{eq:adr_stepbystep_dual} with initial point $u^0=-\frac{x^0}{\gamma}$, the following hold:
\begin{enumerate}[(i')]
  \item There exists $\bar{u} \in \Fix (T_{A',B'})$ with $J_{\gamma' A'}(\bar{u}) \in \zer (A'+B')$ such that $u^k\toweak \bar{u}$;
    \item $\norm{u^k-u^{k+1}} = o(1/\sqrt{k})$ and $\norm{v^k-v^{k+1}}= o(1/\sqrt{k})$ as $k\to \infty$; 
    \item The shadow sequences $\{v^k \} $ and $\{w^k\}$ converge weakly to $J_{\gamma' A'}(\bar{u}) $; and
    \item If $\alpha + \beta >0$, $\kappa\in (0,1)$ and either
            \begin{enumerate}[(a'),leftmargin=3.5em]
                \item $1 + 2 \gamma' \alpha > 0 $ and $\kappa^*\geq 1$; or
                \item $\gamma'=\delta'>0$,  $\lambda'=\mu'=2$, and $\kappa^* > \kappa$,
            \end{enumerate}
            then the shadow sequences $\{v^k \} $ and $\{w^k\}$ converge strongly to $J_{\gamma' A'}(\bar{u}) $ and $\zer (A'+B') = \{ J_{\gamma' A'}(\bar{u}) \}$. 
\end{enumerate}
Using \cref{lemma:adr_primal_dual}, we have from \eqref{eq:primaldual_x} and part (i') that $x^k\toweak \bar{x}$, where $\bar{x}\coloneqq -\gamma \bar{u}\in \gamma -\Fix (T_{A',B'}) = \Fix (T_{A,B})$ and the last equality holds by \cref{lemma:adr_primal_dual}(iii). Note that by \eqref{eq:resolvent_Aprime}, we have 
\begin{equation}
J_{\gamma' A'}(\bar{u}) = \bar{u} + \gamma' J_{\gamma A}\left(\bar{x}\right),
\label{eq:shadowA'}
\end{equation}
and therefore $J_{\gamma A}(\bar{x}) = -\yosida{\gamma'}A'(\bar{u})$. Since $\bar{u}\in \Fix (T_{A',B'})$, it follows from the second equality in \cref{lemma:adr_primal_dual}(v) that $J_{\gamma A}(\bar{x})\in \zer (A+B)$. This proves part (i) of the theorem. On the other hand, part (ii) immediately follows from (ii'),  \eqref{eq:primaldual_x}, \eqref{eq:primaldual_y}, and the triangle inequality. Using  \eqref{eq:shadowA'}, and the limits $u^k\toweak \bar{u}$ and $v^k \toweak J_{\gamma' A'}(\bar{u})$ from (i') and (iii'), respectively, we see from \eqref{eq:primaldual_y} that  $y^k\toweak J_{\gamma A}(\bar{x})$. Similarly, we get $z^k \toweak J_{\gamma A}(\bar{x})$ from \eqref{eq:primaldual_z} and noting \eqref{eq:parameters_basic_requirement_dual}. This completes the proof of part (iii).  Finally, note that the conditions (a') and (b') are equivalent to \eqref{eq:parameters_additional_reqs_comonotone} by using \eqref{eq:dual_parameters}. By (iv'), \eqref{eq:primaldual_y}, \eqref{eq:primaldual_z} and \eqref{eq:shadowA'}, we obtain the limit $x^k-y^k \to \bar{x}-J_{\gamma A}(\bar{x})$ and $x^k-z^k \to \bar{x}-J_{\gamma A}(\bar{x})$. Meanwhile, using $\zer (A'+B') = \{ J_{\gamma' A'}(\bar{u}) \}$ from (iv') together with the second equality in \cref{lemma:adr_primal_dual}(v) and \eqref{eq:shadowA'}, it is not difficult to derive  $\yosida{\gamma}A(\bar{x}) + \zer (A+B) =  \Fix (T_{A,B})$, thus completing the proof of (iv). With these, the proof of the theorem is finished.  
\smartqedmark \end{proof} 

\begin{remark}[Comparison with existing convergence results for comonotone inclusion]
Under conditions \eqref{eq:parameters_basic_requirement_comonotone}, \eqref{eq:parameters_additional_reqs_comonotone} and \eqref{eq:kappastar_comonotone}, the claims in \cref{thm:aDR_comonotonecase}(i) and (ii) coincide with those in \cite[Theorem 5.4]{Bartz2021Conical}. The weak convergence of the shadow sequence $\{y^k\}$, on the other hand, was already proved in \cite[Theorem 4.2]{Bartz2020Demiclosedness}, but under the \textit{additional condition} that 
\ifdefined\submit 
$\gamma + \delta \leq \min \{ 2(\gamma + \alpha), 2(\delta+\beta)\} .$
\else 
    \[\gamma + \delta \leq \min \{ 2(\gamma + \alpha), 2(\delta+\beta)\} .\]
\fi 
 Hence, \cref{thm:aDR_comonotonecase}(iii) is a significant improvement over the existing result. Finally, the strong convergence result in \cref{thm:aDR_comonotonecase}(iv) is \emph{new to the literature.}
\end{remark}

\begin{remark}[On the strong convergence of the shadow sequence]
   In \cite[Remark 4.2]{Bartz2020Demiclosedness}, it was conjectured that the shadow sequence $\{y^k\}$ achieves strong convergence if $\alpha + \beta >0$.  However, this does not hold in general. Indeed, by \cref{thm:aDR_comonotonecase}(iv), the strong convergence of $\{y^k\}$ would imply the strong convergence of $\{x^k\}$. Yet,only weak convergence of $\{x^k\}$ is guaranteed as proved in \cref{thm:aDR_comonotonecase}(i). 
\end{remark}

\section{Douglas--Rachford for multioperator comonotone inclusion}\label{sec:dr_comonotone}
In this section, we provide the convergence analysis for the adaptive DR algorithm for solving the two-operator reformulation \eqref{eq:alcantara_takeda} of the multioperator inclusion problem \eqref{eq:inclusion}. The algorithm is given by 
\begin{equation}
    \x^{k+1} \in T_{{\F},{\G}}(\x^k) , \quad \forall k\in \mathbb{N},
    \tag{\textbf{aDR}}
    \label{eq:aDR_algorithm_bold}
\end{equation}
where $    T_{{\F},{\G}} \coloneqq (1-\kappa) \bfId + \kappa J_{\delta {\G}}^{\mu}J_{\gamma {\F}}^{\lambda},$
and the parameters $(\gamma,\delta,\lambda,\mu)\in \Re^2_{++} \times (1,+\infty)^2$ satisfy \eqref{eq:parameters_basic_requirement}.
As mentioned in \cref{sec:adaptiveDR}, this can be written as
\begin{subequations} \label{eq:adr_stepbystep_bold}
    \begin{align*}
        \y^k & \in  J_{\gamma \F}(\x^k) \\ 
        \z^k & \in  J_{\delta  \G}((1-\lambda)\x^k + \lambda \y^k) \\ 
        \x^{k+1} & = \x^k + \kappa \mu (\z^k - \y^k). 
    \end{align*}
\end{subequations}
We recall from \cite[Propositions 3.5 and 3.6]{AlcantaraTakeda2025} the following resolvent formulas:
\begin{align}
    J_{\gamma \F} (\x) & = J_{\gamma A_1}(x_1) \times \cdots \times J_{\gamma A_{m-1}}(x_{m-1}), \label{eq:Fresolvent}\\ 
    J_{\delta \G} (\x) & = \left\lbrace (u,\dots,u)\in \H^{m-1} : u\in J_{\frac{\delta}{m-1} A_m} \left( \frac{1}{m-1}\sum_{i=1}^{m-1} x_i\right) \right\rbrace .\label{eq:Gresolvent}
\end{align}

Using these formulas, the algorithm can also be expressed 
as in \cref{alg:dr}.
\begin{algorithm}
    Choose $(x^0_1,\dots,x^0_{m-1})\in \H^{m-1}$, parameters $(\gamma,\delta,\lambda,\mu)\in \Re^2_{++}\times (1,+\infty)^2$ that satisfy \eqref{eq:parameters_basic_requirement}, and $\kappa\in (0,1)$. \\
    For $k=1,2,\dots ,$
    \begin{equation*}
        \begin{array}{rll}
            \ds y_i^{k}&  \in J_{\gamma A_i}(x_i^k), & (i=1,\dots,m-1) \\
		\ds 	\ds z^{k}&  \ds  \in J_{\frac{\delta}{m-1} A_m}\left( \sum_{i=1}^{m-1}  ((1-\lambda)x_i^k+\lambda y_i^k)\right) \\
		\ds 	x_i^{k+1} & = x_i^k + \kappa \mu ( y^{k} - z_i^{k} ) & (i=1,\dots,m-1) .
        \end{array}
		\end{equation*}
	\caption{Adaptive Douglas--Rachford for $m$-operator inclusion problem \eqref{eq:inclusion}.}
	\label{alg:dr}
\end{algorithm}

\subsection{Convergence via naive  application of theory for comonotone operators}\label{sec:directapplication}
We now establish the convergence of \eqref{eq:aDR_algorithm_bold} by establishing that $\F$ and $\G$ are maximal comonotone operators and then directly applying \cref{thm:aDR_comonotonecase}. We first list the formulas for the inverse mappings of $\F$ and $\G$, and derive their comonotonicity moduli under the assumption that $A_i$ is $\sigma_i$-comonotone for each $i$. 

\begin{lemma}[Inverse formulas]
\label{lemma:inverseformula}
    Let $\F$ and $\G$ be given by \eqref{eq:F} and \eqref{eq:G}, respectively. Then for all $\u =(u_1,\dots,u_{m-1})\in \H^{m-1}$, the following hold:
    \begin{enumerate}[(i)]
        \item $\F^{-1} (\u) = A_1^{-1}(u_1) \times \cdots \times A_{m-1}^{-1}(u_{m-1})$.
        \item $\ds \G^{-1}(\u) = \left\lbrace (a,\dots,a)\in \H^{m-1} : a \in A_m^{-1} \left( \sum_{i=1}^{m-1} u_i\right)\right\rbrace$.
    \end{enumerate}
\end{lemma}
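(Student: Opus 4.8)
The plan is to verify both inverse formulas directly from the definitions of $\F$ and $\G$ in \eqref{eq:F}--\eqref{eq:G}, using only the characterization $\gra(A^{-1}) = \{(y,x) : (x,y)\in\gra(A)\}$ together with the product structure. For part (i), since $\F$ acts coordinatewise, this is essentially immediate: I would observe that $(\x,\u)\in\gra(\F)$ iff $(x_i,u_i)\in\gra(A_i)$ for each $i\in\{1,\dots,m-1\}$, hence $(\u,\x)\in\gra(\F^{-1})$ iff $(u_i,x_i)\in\gra(A_i^{-1})$ for each $i$, which is exactly the claimed Cartesian product. I would write this in one or two lines.

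For part (ii), the main point is to unwind the definition $\G = \K + N_{\D_{m-1}}$. First I would note that $\dom(\G) \subseteq \D_{m-1}$, because both $\K$ and $N_{\D_{m-1}}$ are empty off the diagonal; so any element of $\gra(\G)$ has the form $((x_1,\dots,x_{m-1}), \v)$ with $x_1=\cdots=x_{m-1}=:a$. For such a diagonal point, $\K(\x) = \{\frac{1}{m-1}(v,\dots,v) : v\in A_m(a)\}$ and $N_{\D_{m-1}}(\x) = \D_{m-1}^\perp = \{\w = (w_1,\dots,w_{m-1}) : \sum_i w_i = 0\}$. Hence $\u\in\G(\x)$ iff there exist $v\in A_m(a)$ and $\w$ with $\sum_i w_i = 0$ such that $u_i = \frac{v}{m-1} + w_i$ for all $i$. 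Summing over $i$ gives $\sum_{i=1}^{m-1} u_i = v \in A_m(a)$, i.e.\ $a\in A_m^{-1}(\sum_i u_i)$; conversely, given $a\in A_m^{-1}(\sum_i u_i)$, setting $v := \sum_i u_i$ and $w_i := u_i - \frac{v}{m-1}$ recovers $\sum_i w_i = v - v = 0$ and $u_i = \frac{v}{m-1}+w_i$, so $\u\in\G((a,\dots,a))$. This shows $((a,\dots,a),\u)\in\gra(\G)$ iff $a\in A_m^{-1}(\sum_i u_i)$, which upon inverting (i.e.\ reading $(\u, (a,\dots,a))\in\gra(\G^{-1})$) gives precisely the stated formula $\G^{-1}(\u) = \{(a,\dots,a) : a\in A_m^{-1}(\sum_{i=1}^{m-1} u_i)\}$.

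I expect no real obstacle here — this is a bookkeeping lemma. The only place requiring a moment's care is the direction of part (ii): making sure the decomposition $u_i = \frac{v}{m-1}+w_i$ with $\sum w_i = 0$ is genuinely equivalent to $\sum u_i \in A_m(a)$ (so that one does not accidentally impose extra constraints on the individual $u_i$), and keeping straight that the $\frac{1}{m-1}$ scaling in $\K$ exactly cancels against the $m-1$ summands when adding up. Both checks are routine. An alternative, slightly slicker route for (ii) is to use the resolvent formula \eqref{eq:Gresolvent} already recalled from \cite{AlcantaraTakeda2025}: from $J_{\delta\G} = (\bfId+\delta\G)^{-1}$ and \eqref{eq:Gresolvent} one can back out $\G^{-1}$ via the identity \eqref{eq:resolvent_inverse}, but the direct graph computation is cleaner and self-contained, so I would present that.
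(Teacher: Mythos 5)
Your proposal is correct and follows the same route as the paper, which proves the lemma in one line by appealing directly to the definition of the inverse and the identity $N_{\D_{m-1}}(\x)=\D_{m-1}^\perp$ for $\x\in\D_{m-1}$; your argument simply spells out the bookkeeping (coordinatewise inversion for $\F$, and the cancellation of the $\tfrac{1}{m-1}$ factor against the zero-sum normal-cone component for $\G$) that the paper leaves implicit.
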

\begin{proof}
This follows directly from the definition of the inverse and the identity $N_{\D_{m-1}} (\x) = \D_{m-1}^\perp$ for $\x\in \D_{m-1}$.
\smartqedmark \end{proof}

\begin{proposition}
    \label{prop:F_monotone}
    Suppose that $A_i:\H\toset \H$ is $\sigma_i$-comonotone for $i=1,\dots, m-1$. Then $\F$ given by \eqref{eq:F} is $\alpha$-comonotone, where $\alpha\coloneqq \ds \left( \min_{i=1,\dots,m-1}\sigma_i\right)$. Furthermore, if each $A_i$ is maximal $\sigma_i$-comonotone, then $\F$ is maximal $\alpha$-comonotone. 
\end{proposition}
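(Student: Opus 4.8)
The plan is to prove the two assertions separately: the comonotonicity bound for $\F$ is a direct coordinatewise estimate, whereas maximality is most cleanly obtained by invoking the resolvent characterization of maximal comonotonicity (\cref{lemma:maximal_single-valued-resolvent-comonotone}) rather than arguing from the definition.

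First I would fix arbitrary $(\x,\u),(\y,\v)\in\gra(\F)$ with $\x=(x_1,\dots,x_{m-1})$, $\u=(u_1,\dots,u_{m-1})$, and likewise for $\y,\v$. By the definition of $\F$ in \eqref{eq:F}, $u_i\in A_i(x_i)$ and $v_i\in A_i(y_i)$, so $\sigma_i$-comonotonicity of $A_i$ yields $\inner{x_i-y_i}{u_i-v_i}\ge\sigma_i\norm{u_i-v_i}^2$ for each $i$. Since $\sigma_i\ge\alpha$ and $\norm{u_i-v_i}^2\ge 0$ (an inequality that holds irrespective of the sign of $\alpha$), summing over $i$ gives
\[
\inner{\x-\y}{\u-\v}=\sum_{i=1}^{m-1}\inner{x_i-y_i}{u_i-v_i}\ge\sum_{i=1}^{m-1}\sigma_i\norm{u_i-v_i}^2\ge\alpha\sum_{i=1}^{m-1}\norm{u_i-v_i}^2=\alpha\norm{\u-\v}^2,
\]
which is exactly $\alpha$-comonotonicity of $\F$.

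For the maximality claim I would use \cref{lemma:maximal_single-valued-resolvent-comonotone}(ii): for a $\sigma$-comonotone operator $A$ and any $\gamma>0$ with $\gamma+\sigma>0$, $A$ is maximal $\sigma$-comonotone if and only if $\dom(J_{\gamma A})=\H$. Pick $\gamma>0$ with $\gamma+\alpha>0$ (e.g.\ $\gamma>\max\{0,-\alpha\}$). Then $\gamma+\sigma_i\ge\gamma+\alpha>0$ for all $i$, so each $A_i$ being maximal $\sigma_i$-comonotone gives $\dom(J_{\gamma A_i})=\H$; combined with the product resolvent formula \eqref{eq:Fresolvent}, this yields $\dom(J_{\gamma\F})=\dom(J_{\gamma A_1})\times\cdots\times\dom(J_{\gamma A_{m-1}})=\H^{m-1}$. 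Since $\F$ is $\alpha$-comonotone by the first part and $\gamma+\alpha>0$, applying \cref{lemma:maximal_single-valued-resolvent-comonotone}(ii) to $\F$ on the Hilbert space $\H^{m-1}$ then gives that $\F$ is maximal $\alpha$-comonotone.

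The step I expect to require care is precisely maximality: one cannot simply transfer it ``coordinatewise'' from the $A_i$, because the comonotonicity inequality couples all blocks through $\norm{\u-\v}^2=\sum_i\norm{u_i-v_i}^2$, so testing a candidate pair $(\x,\u)$ against single-coordinate perturbations in $\gra(\F)$ presupposes that the remaining coordinates of $\u$ already lie in the corresponding $A_i(x_i)$. Routing through the resolvent avoids this circularity, since $J_{\gamma\F}$ genuinely factors as a product and full domain is preserved by products. An equivalent route would be \cref{lemma:maximal_sigma_equivalent}, writing $\F^{-1}-\alpha\bfId$ as the product of the maximal monotone operators $A_i^{-1}-\sigma_i\Id$ perturbed by the full-domain monotone maps $(\sigma_i-\alpha)\Id$, but that still rests on the same product-stability fact, so the resolvent argument is the most economical.
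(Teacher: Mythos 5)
Your proof is correct, but it follows a genuinely different route from the paper's. The paper handles both claims by passing to inverses: since each $A_i^{-1}$ is ($\sigma_i$-)monotone by \cref{lemma:maximal_sigma_equivalent}, and $\F^{-1}$ is the product $A_1^{-1}\times\cdots\times A_{m-1}^{-1}$ by \cref{lemma:inverseformula}(i), it invokes an external result (Proposition 4.1 of \cite{AlcantaraTakeda2025}) asserting that a product of (maximal) $\sigma_i$-monotone operators is (maximal) $\alpha$-monotone, then translates back to comonotonicity via \cref{lemma:maximal_sigma_equivalent}; this is essentially the ``alternative route'' you sketch at the end. You instead prove $\alpha$-comonotonicity by the direct coordinatewise estimate (which is fine, including the observation that $\sigma_i\ge\alpha$ and $\norm{u_i-v_i}^2\ge 0$ make the bound sign-independent), and you obtain maximality from the resolvent characterization in \cref{lemma:maximal_single-valued-resolvent-comonotone}(ii), choosing $\gamma>\max\{0,-\alpha\}$ so that $\gamma+\sigma_i\ge\gamma+\alpha>0$, using the product structure of $J_{\gamma\F}$ from \eqref{eq:Fresolvent} to get full domain. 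Both arguments are sound; the paper's is shorter because it outsources the product-stability fact to the cited proposition and mirrors the subsequent treatment of $\G$, while yours is self-contained within the lemmas stated in this paper and makes explicit why maximality cannot naively be transferred blockwise — exactly the point where a careless direct argument would fail. This is also the same mechanism the paper itself uses later when proving maximality of $\G$ in \cref{prop:G_monotone} (full domain of the resolvent via \cref{lemma:maximal_single-valued-resolvent-comonotone}), so your approach is fully consistent with the paper's toolkit.
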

\begin{proof}
If $A_i$ is $\sigma_i$-comonotone, then $A_i^{-1}$ is $\sigma_i$-monotone  by \cref{lemma:maximal_sigma_equivalent}. By \cref{lemma:inverseformula}(i)  and \cite[Proposition 4.1]{AlcantaraTakeda2025}, $\F^{-1}$ is $ \alpha$-monotone. Hence, $\F$ is $\alpha$-comonotone by \cref{lemma:maximal_sigma_equivalent}. The arguments for the second part are the same.

\smartqedmark \end{proof}

\begin{proposition}
\label{prop:G_monotone}
    Suppose that $A_m$ is $\sigma_m$-comonotone with $\sigma_m\leq 0$. Then $\G$ is $(m-1)\sigma_m$-comonotone. Furthermore, if $A_m$ is maximal $\sigma_m$-comonotone with $\sigma_m\leq 0$, then $\G$ is maximal $(m-1)\sigma_m$-comonotone. 
\end{proposition}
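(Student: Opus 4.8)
The plan is to establish the comonotonicity claim by arguing directly on $\gra(\G)$, using the orthogonal decomposition $\H^{m-1}=\D_{m-1}\oplus\D_{m-1}^\perp$ to separate the $\K$-part from the normal-cone part, and then to deduce maximality from the resolvent-domain criterion in \cref{lemma:maximal_single-valued-resolvent-comonotone}(ii) together with the resolvent formula \eqref{eq:Gresolvent}.

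For the comonotonicity part, I would fix $(\x,\u),(\x',\u')\in\gra(\G)$. Since $\dom(\K)=\D_{m-1}$, necessarily $\x=(x,\dots,x)$ and $\x'=(x',\dots,x')$ for some $x,x'\in\H$, and there exist $v\in A_m(x)$, $v'\in A_m(x')$ and $\n,\n'\in\D_{m-1}^\perp$ with $\u=\frac{1}{m-1}(v,\dots,v)+\n$ and $\u'=\frac{1}{m-1}(v',\dots,v')+\n'$. Because $\x-\x'\in\D_{m-1}$ is orthogonal to $\n-\n'\in\D_{m-1}^\perp$, the normal-cone part drops out and $\inner{\x-\x'}{\u-\u'}=\inner{x-x'}{v-v'}$; similarly, $\frac{1}{m-1}(v-v',\dots,v-v')\in\D_{m-1}$ is orthogonal to $\n-\n'$, so $\norm{\u-\u'}^2=\frac{1}{m-1}\norm{v-v'}^2+\norm{\n-\n'}^2\ge\frac{1}{m-1}\norm{v-v'}^2$, i.e.\ $\norm{v-v'}^2\le(m-1)\norm{\u-\u'}^2$. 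Now $\sigma_m$-comonotonicity of $A_m$ gives $\inner{x-x'}{v-v'}\ge\sigma_m\norm{v-v'}^2$, and since $\sigma_m\le0$, multiplying $\norm{v-v'}^2\le(m-1)\norm{\u-\u'}^2$ by $\sigma_m$ reverses the inequality to $\sigma_m\norm{v-v'}^2\ge(m-1)\sigma_m\norm{\u-\u'}^2$. Combining, $\inner{\x-\x'}{\u-\u'}\ge(m-1)\sigma_m\norm{\u-\u'}^2$, so $\G$ is $(m-1)\sigma_m$-comonotone.

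For maximality, assume in addition that $A_m$ is maximal $\sigma_m$-comonotone. Choose any $\delta>0$ with $\delta+(m-1)\sigma_m>0$ (possible since $\sigma_m\le0$); then $\frac{\delta}{m-1}+\sigma_m>0$, so \cref{lemma:maximal_single-valued-resolvent-comonotone}(ii) yields $\dom(J_{\frac{\delta}{m-1}A_m})=\H$. By \eqref{eq:Gresolvent}, $\x\in\dom(J_{\delta\G})$ exactly when $\frac{1}{m-1}\sum_{i=1}^{m-1}x_i\in\dom(J_{\frac{\delta}{m-1}A_m})$, hence $\dom(J_{\delta\G})=\H^{m-1}$. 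Since $\G$ has just been shown to be $(m-1)\sigma_m$-comonotone and $\delta+(m-1)\sigma_m>0$, the reverse implication of \cref{lemma:maximal_single-valued-resolvent-comonotone}(ii) applied to $\G$ gives that $\G$ is maximal $(m-1)\sigma_m$-comonotone.

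The calculation itself is routine; the one point that genuinely matters is the sign hypothesis $\sigma_m\le0$, which is precisely what allows the bound $\norm{v-v'}^2\le(m-1)\norm{\u-\u'}^2$ (the only one available, since the slack term $\norm{\n-\n'}^2$ is unbounded) to be converted into the needed lower bound on $\inner{\x-\x'}{\u-\u'}$; for $\sigma_m>0$ the argument collapses, which matches the blanket assumption $\sigma_m\le0$. A minor item to verify in passing is that \eqref{eq:Gresolvent} is valid without presupposing maximality of $A_m$ — it is, being a purely algebraic identity, and $\dom(J_{\delta\G})$ can alternatively be computed directly by solving $\x-\y\in\delta\G(\y)$.
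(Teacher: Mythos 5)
Your proof is correct, and the maximality half is essentially the paper's argument verbatim: pick $\delta>0$ with $\delta+(m-1)\sigma_m>0$, use \cref{lemma:maximal_single-valued-resolvent-comonotone}(ii) on $A_m$ together with the resolvent formula \eqref{eq:Gresolvent} to get $\dom(J_{\delta\G})=\H^{m-1}$, then apply the same lemma in the reverse direction to $\G$. The comonotonicity half, however, takes a genuinely different (and equally valid) route. The paper passes to the inverse: by \cref{lemma:maximal_sigma_equivalent} it suffices to show $\G^{-1}$ is $(m-1)\sigma_m$-monotone, and there the key estimate is the Cauchy--Schwarz-type bound $\norm{\sum_{i=1}^{m-1}(u_i-v_i)}^2\le(m-1)\norm{\u-\v}^2$ combined with $\sigma_m\le 0$; this dual viewpoint is natural because the same inequality \eqref{eq:weirdmonotonicityofGinverse} is recycled later in the refined analysis (\cref{lemma:properties_reflected_resolvents}). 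You instead argue directly on $\gra(\G)$, exploiting the orthogonal decomposition $\H^{m-1}=\D_{m-1}\oplus\D_{m-1}^{\perp}$ so that the normal-cone component drops out of the inner product and the Pythagorean identity yields $\norm{v-v'}^2\le(m-1)\norm{\u-\u'}^2$, again converted by $\sigma_m\le0$. The two estimates are dual images of each other and the sign hypothesis plays the identical role in both; your primal argument is arguably more transparent about why the factor $m-1$ and the condition $\sigma_m\le0$ appear, while the paper's inverse-based computation has the side benefit of producing an inequality reused elsewhere. Your closing remark that \eqref{eq:Gresolvent} is an algebraic identity not requiring maximality is fine, and in any case immaterial since $A_m$ is assumed maximal in the part where the formula is invoked.
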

\begin{proof}
    By \cref{lemma:maximal_sigma_equivalent}, it suffices to show that $\G^{-1}$ is $(m-1)\sigma_m$-monotone. Let $(\u,\a),(\v,\b)\in \gra (\G^{-1})$. By \cref{lemma:inverseformula}, $\a = (a,\dots,a)$ and $\b = (b,\dots,b)$ where $a \in A_m^{-1} \left( \sum_{i=1}^{m-1} u_i\right)$ and $b \in A_m^{-1} \left( \sum_{i=1}^{m-1} v_i\right)$. Thus,
    \ifdefined\submit
    \begin{equation}
        \inner{\u - \v}{\a -\b} =  \inner{\sum_{i=1}^{m-1} u_i-\sum_{i=1}^{m-1}v_i}{a-b}  \geq \sigma_m \norm{\sum_{i=1}^{m-1}(u_i-v_i)}^2,
        \label{eq:weirdmonotonicityofGinverse}
    \end{equation}
    \else 
    \begin{equation}
        \inner{\u - \v}{\a -\b} = \sum_{i=1}^{m-1} \inner{u_i-v_i}{a-b} =  \inner{\sum_{i=1}^{m-1} u_i-\sum_{i=1}^{m-1}v_i}{a-b}  \geq \sigma_m \norm{\sum_{i=1}^{m-1}(u_i-v_i)}^2,
        \label{eq:weirdmonotonicityofGinverse}
    \end{equation}
    \fi 
    where the last inequality holds by $\sigma_m$-monotonicity of $A_m^{-1}$, due to \cref{lemma:maximal_sigma_equivalent}. The claim now follows by noting that 
    \[\norm{\sum_{i=1}^{m-1}(u_i-v_i)}^2 \leq (m-1) \sum_{i=1}^{m-1}\norm{(u_i-v_i)}^2 = (m-1)\norm{\u - \v}^2\]
    and $\sigma_m\leq0$. To prove that $\G$ is maximal $(m-1)\sigma_m$-comonotone when $A_m$ is maximal $\sigma_m$-comonotone, let $\gamma>0$ such that $\gamma + (m-1)\sigma_m>0$. By \cref{lemma:maximal_single-valued-resolvent-comonotone}(ii), it is enough to prove that $J_{\gamma \G}$ has full domain. To this end, we recall from \cite[Proposition 3.6]{AlcantaraTakeda2025} that 
    \begin{equation}
        J_{\gamma \G}(\x) = \left\lbrace \u = (u,\dots,u) \in \H^{m-1} : u\in J_{\frac{\gamma}{m-1}A_m}\left( \frac{1}{m-1}\sum_{i=1}^{m-1} x_i\right) \right\rbrace. 
        \label{eq:G_resolvent}
    \end{equation}
   Since $A_m$ is maximal $\sigma_m$-comonotone and $\frac{\gamma}{m-1}+\sigma_m>0$, \cref{lemma:maximal_single-valued-resolvent-comonotone}(ii) implies that $J_{\frac{\gamma}{m-1}A_m}$ has full domain, and so does $J_{\gamma \G}$ by noting \eqref{eq:G_resolvent}.

\smartqedmark \end{proof}

\begin{theorem}
\label{thm:adaptiveDR}
   Suppose that $A_i$ is maximal $\sigma_i$-comonotone for $i=1,\dots,m$, $\zer (A_1+\cdots + A_m) \neq \emptyset$, $\sigma_m\leq 0$ and $\sigma_i + (m-1)\sigma_m\geq 0$ for all $i=1,\dots,m-1$. Suppose $(\gamma,\delta,\lambda,\mu)\in \Re^4_{++}$ together with $(\alpha,\beta)\coloneqq (\min_{1\leq i \leq m-1}\sigma_i,(m-1)\sigma_m)$ satisfies \eqref{eq:parameters_basic_requirement_comonotone} and \eqref{eq:parameters_additional_reqs_comonotone}. Set
 $\kappa < \kappa^*$ with $\kappa^*$ given by \eqref{eq:kappastar_comonotone}. If $\{ (\x^k,\y^k,\z^k)\}$ is generated by \eqref{eq:adr_stepbystep_bold} from an arbitrary initial point $\x^0\in \H^{m-1}$, then the following hold:
 \begin{enumerate}[(i)]
     \item There exists $\bar{\x}\in \Fix (T_{\F,\G})$ with $J_{\gamma \F}(\bar{\x})\in \zer (\F+\G)$ such that $\x^k\toweak \bar{\x}$; 
     \item $\norm{\x^k-\x^{k+1}}=o(1/\sqrt{k})$ and $\norm{\y^k-\y^{k+1}}=o(1/\sqrt{k})$ as $k\to\infty$;
     \item The shadow sequences $\{\y^k\}$ and $\{\z^k\}$ converge weakly to $J_{\gamma \F}(\bar{\x})\in \zer (\F+\G)$; and 
     \item If $\sigma_i+(m-1)\sigma_m>0$ for all $i=1,\dots,m-1$, $  0<\kappa <1$,
     and either
        \begin{enumerate}[leftmargin=2.5em]
            \item $  \kappa^*\geq 1$; or
            \item $\gamma=\delta>0$, $\lambda=\mu=2$, and $\kappa^*>\kappa$,
        \end{enumerate}
    then the sequences $\{\x^k-\y^k \} $ and $\{\x^k-\z^k\}$ converge strongly to $\gamma \yosida{\gamma }\F(\bar{\x}) = \bar{\x}-J_{\gamma \F}(\bar{\x}) $ and $\yosida{\gamma}\F(\bar{\x}) + \zer (\F+\G) =  \Fix (T_{\F,\G})$. 
 \end{enumerate}
\end{theorem}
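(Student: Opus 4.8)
The plan is to \textbf{reduce the statement to \cref{thm:aDR_comonotonecase}} applied to the two-operator reformulation \eqref{eq:alcantara_takeda}, i.e.\ with $A\coloneqq\F$ and $B\coloneqq\G$, and then to read off the conclusions in the product space $\H^{m-1}$. First I would invoke \cref{prop:F_monotone} to get that $\F$ is maximal $\alpha$-comonotone with $\alpha=\min_{1\le i\le m-1}\sigma_i$, and \cref{prop:G_monotone} (which applies because $\sigma_m\le 0$ is assumed) to get that $\G$ is maximal $\beta$-comonotone with $\beta=(m-1)\sigma_m$. Thus the moduli of $A$ and $B$ are precisely the pair $(\alpha,\beta)$ figuring in the hypotheses of the theorem.

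Next I would check the remaining hypotheses of \cref{thm:aDR_comonotonecase}. For the sign condition: if the minimum defining $\alpha$ is attained at some index $i_0\in\{1,\dots,m-1\}$, then $\alpha+\beta=\sigma_{i_0}+(m-1)\sigma_m\ge 0$; conversely this shows $\alpha+\beta\ge 0$ is \emph{equivalent} to $\sigma_i+(m-1)\sigma_m\ge 0$ for all $i$, so the hypothesis holds. Nonemptiness of $\zer(\F+\G)$ follows from $\zer(A_1+\cdots+A_m)\ne\emptyset$ via the zero correspondence \eqref{eq:zeros_equivalence}. The parameter requirements \eqref{eq:parameters_basic_requirement_comonotone}, \eqref{eq:parameters_additional_reqs_comonotone}, and $\kappa<\kappa^*$ (with $\kappa^*$ as in \eqref{eq:kappastar_comonotone}) are assumed verbatim; since $\gamma,\delta>0$, the relation $\delta=\gamma(\lambda-1)$ forces $\lambda>1$ and then $(\lambda-1)(\mu-1)=1$ forces $\mu>1$, so the restriction $(\gamma,\delta,\lambda,\mu)\in\Re^2_{++}\times(1,+\infty)^2$ is met (and, as used implicitly in \cref{thm:aDR_comonotonecase}, these same conditions also give $\gamma+\alpha>0$ and $\delta+\beta>0$, so $J_{\gamma\F}$ and $J_{\delta\G}$ are single-valued with full domain by \cref{lemma:maximal_single-valued-resolvent-comonotone}). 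With all hypotheses in place, items (i)--(iii) of \cref{thm:aDR_comonotonecase} for $(\F,\G)$ are exactly items (i)--(iii) here, since \eqref{eq:adr_stepbystep_bold} is \eqref{eq:adr_stepbystep} written for the operator pair $(\F,\G)$ and $J_{\gamma\F}(\bar{\x})\in\zer(\F+\G)$ by (i).

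For item (iv), the standing assumption $\sigma_i+(m-1)\sigma_m>0$ for all $i$ is, by the same argument as above, equivalent to $\alpha+\beta>0$, and case (b) here ($\gamma=\delta$, $\lambda=\mu=2$, $\kappa^*>\kappa$) matches \cref{thm:aDR_comonotonecase}(iv)(b) verbatim. The one point that needs an argument is case (a), since \cref{thm:aDR_comonotonecase}(iv)(a) additionally demands $\gamma+2\alpha>0$: I claim this follows from $\kappa^*\ge 1$ together with $\alpha+\beta>0$. Setting $p\coloneqq\gamma+\alpha$, $q\coloneqq\delta+\beta$, and $s\coloneqq\alpha+\beta>0$, one has $\gamma+\delta=p+q-s$, and a direct manipulation of \eqref{eq:kappastar_comonotone} shows $\kappa^*\ge 1$ is equivalent to $(p-q)^2\le s^2$, hence $p-q\ge -s$, i.e.\ $\gamma+2\alpha\ge\delta>0$. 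Therefore \cref{thm:aDR_comonotonecase}(iv) applies and gives the strong convergence of $\{\x^k-\y^k\}$ and $\{\x^k-\z^k\}$ to $\gamma\,\yosida{\gamma}\F(\bar{\x})=\bar{\x}-J_{\gamma\F}(\bar{\x})$, together with $\yosida{\gamma}\F(\bar{\x})+\zer(\F+\G)=\Fix(T_{\F,\G})$. In short, the proof is essentially bookkeeping --- matching comonotonicity moduli, matching parameters, and transporting conclusions through the resolvent formulas \eqref{eq:Fresolvent}--\eqref{eq:Gresolvent} --- and the only mildly delicate step is the algebraic observation that $\kappa^*\ge 1$ already encodes $\gamma+2\alpha>0$, which is exactly what lets condition (iv)(a) be stated here with the single inequality $\kappa^*\ge 1$.
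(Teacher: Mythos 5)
Your proposal is correct and follows essentially the same route as the paper, whose proof of this theorem is a one-line reduction: note $\zer(\F+\G)\neq\emptyset$ via \eqref{eq:zeros_equivalence} and apply \cref{thm:aDR_comonotonecase} together with \cref{prop:F_monotone,prop:G_monotone}. Your extra algebraic check that $\kappa^*\geq 1$ (with $\alpha+\beta>0$ and $\gamma,\delta>0$) already forces $\gamma+2\alpha\geq\delta>0$ is a genuine and correct addition that justifies why part (iv)(a) here can omit the condition $\gamma+2\alpha>0$ appearing in \cref{thm:aDR_comonotonecase}(iv)(a), a point the paper's proof leaves implicit.
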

\begin{proof}
    Note that $\zer(\F+\G)\neq \emptyset$ due to \eqref{eq:zeros_equivalence}. Then the result is immediate by applying \cref{thm:aDR_comonotonecase}, \cref{prop:F_monotone} and \cref{prop:G_monotone}.
\smartqedmark \end{proof}

\subsection{A refined convergence analysis}\label{sec:refinedanalysis}

In this section, we derive stronger convergence results for the adaptive DR algorithm \eqref{eq:aDR_algorithm_bold}, specifically when $\sigma_m<0$. To this end, we consider the Attouch--Th\'{e}ra dual of \eqref{eq:alcantara_takeda}, and invoke the results established in \cref{sec:attouchthera}. In particular, we consider the dual problem of \eqref{eq:alcantara_takeda} given by 
\begin{equation*}
    \text{Find~} \u \in \H^{m-1} ~\text{such that } 0\in \F'(\u) + \G'(\u).
    \tag{\textbf{D}}
    \label{eq:D_bold}
\end{equation*}
where $    {\F}'= -\F^{-1}\circ (-\bfId) $ and  ${\G}'= \G^{-1}.$
The adaptive DR algorithm for solving \eqref{eq:D_bold} with parameters $(\kappa, \gamma',\delta',\lambda',\mu')$ is given by 
\begin{equation}
    \u^{k+1} \in T_{{\F}',{\G}'}(\u^k) , \quad \forall k\in \mathbb{N},
    \tag{\textbf{aDR'}}
    \label{eq:aDR_algorithm_dual_bold}
\end{equation}
where
\begin{equation}
    T_{{\F}',{\G}'} = (1-\kappa) \bfId + \kappa J_{\delta' {\G}'}^{\mu'}J_{\gamma' {\F}'}^{\lambda'}.
    \label{eq:Tf'g'}
\end{equation}
This can be equivalently written  as
\begin{subequations} \label{eq:adr_stepbystep_dual_bold}
    \begin{align}
        \v^k & \in  J_{\gamma' \F'}(\u^k)\label{eq:adr_stepbystep_dual_v_bold}\\
        \w^k & \in  J_{\delta'  \G'}((1-\lambda')\u^k + \lambda' \v^k) \label{eq:adr_stepbystep_dual_w_bold}\\
        \u^{k+1} & = \u^k + \kappa \mu' (\w^k - \v^k). \label{eq:adr_stepbystep_dual_u_bold}
    \end{align}
\end{subequations}
Similar to \eqref{eq:T_AB_singlevalued}, we may write $T_{\F',\G'}$ as
    \begin{equation}
        T_{\F',\G'} = \bfId - \kappa\mu (J_{\delta' \G'}J_{\gamma \F'}^{\lambda'}-J_{\gamma' \F'}). \label{eq:T_FG'_singlevalued}
    \end{equation}

We first establish some properties of $\F'$ and $\G'$.
    

\begin{lemma}[Properties of relaxed resolvents]
    Let $A_i:\H\to \H$ be $\sigma_i$-comonotone for each $i=1,\dots,m$. For any $\lambda'>1$, the following hold:
    \begin{enumerate}[(i)]
    \item For any $({\x},\u'), ({\y},\v')\in \gra (J_{\gamma' \F^{-1}}^{\lambda'})$,
    \begin{equation*}
        \norm{{\u}' - {\v}'}^2 \leq (\lambda'-1)^2\norm{\x-\y}^2 -\lambda'  \sum_{i=1}^{m-1}(2(\lambda-1)(1+\gamma'\sigma_i)-\lambda) \norm{u_i - v_i}^2,
    \end{equation*}
    where $\u=(u_1,\dots,u_{m-1})\in J_{\gamma' \F^{-1}}(\x)$ and $\v = (v_1,\dots,v_{m-1}) \in J_{\gamma' \F^{-1}}(\y)$ are such that ${\u}'=\lambda' \u + (1-\lambda')\x$ and ${\v}'=\lambda' \v + (1-\lambda')\y$. 

    \item For any $({\x},\u'), ({\y},\v')\in \gra (J_{\gamma' \G ^{-1}}^{\lambda'})$,
    \ifdefined\submit
    \begin{align*}
            \norm{\u' - \v '}^2 & \leq (\lambda'-1)^2 \norm{\x-\y}^2 +\lambda' (2-\lambda') \norm{\u-\v}^2 \\
            & \quad -2\lambda'(\lambda' - 1)\gamma' \sigma_m \norm{\sum_{i=1}^{n-1} (u_i - v_i)}^2,
    \end{align*}
    \else 
        \[\norm{\u' - \v '}^2 \leq (\lambda'-1)^2 \norm{\x-\y}^2 +\lambda' (2-\lambda') \norm{\u-\v}^2 -2\lambda'(\lambda' - 1)\gamma' \sigma_m \norm{\sum_{i=1}^{n-1} (u_i - v_i)}^2, \]
    \fi 
     where $\u=(u_1,\dots,u_{m-1})\in J_{\gamma' \G^{-1}}(\x)$ and $\v = (v_1,\dots,v_{m-1}) \in J_{\gamma' \G^{-1}}(\y)$ are such that ${\u}'=\lambda' \u + (1-\lambda')\x$ and ${\v}'=\lambda' \v + (1-\lambda')\y$.
\end{enumerate}
\label{lemma:properties_reflected_resolvents}
\end{lemma}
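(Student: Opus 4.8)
The plan is to work from the resolvent/Yosida identity together with the comonotonicity estimates, treating each component separately for part (i) and treating the ``diagonal'' structure of $\G$ carefully for part (ii). Fix $(\x,\u'),(\y,\v')\in\gra(J_{\gamma'\F^{-1}}^{\lambda'})$ as in the statement, so that $\u=J_{\gamma'\F^{-1}}(\x)$, $\v=J_{\gamma'\F^{-1}}(\y)$, $\u'=\lambda'\u+(1-\lambda')\x$ and $\v'=\lambda'\v+(1-\lambda')\y$. Since $\F^{-1}=A_1^{-1}\times\cdots\times A_{m-1}^{-1}$ by \cref{lemma:inverseformula}(i), the resolvent splits coordinatewise (cf.\ \eqref{eq:Fresolvent} applied to $\F^{-1}$), so $u_i=J_{\gamma'A_i^{-1}}(x_i)$ and $v_i=J_{\gamma'A_i^{-1}}(y_i)$. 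Because $A_i$ is $\sigma_i$-comonotone, $A_i^{-1}$ is $\sigma_i$-monotone by \cref{lemma:maximal_sigma_equivalent}. The key per-coordinate estimate is the standard one for relaxed resolvents of $\sigma$-monotone operators: writing $p_i\coloneqq\lambda'u_i+(1-\lambda')x_i$ and $q_i\coloneqq\lambda'v_i+(1-\lambda')y_i$, one has $x_i-u_i\in\gamma'A_i^{-1}(u_i)$, hence $\inner{u_i-v_i}{(x_i-u_i)-(y_i-v_i)}\ge\gamma'\sigma_i\|u_i-v_i\|^2$. Expanding $\|p_i-q_i\|^2$ via the identity $p_i-q_i=(x_i-y_i)-\lambda'((x_i-u_i)-(y_i-v_i))$ and inserting this inequality yields
\[
\|p_i-q_i\|^2\le(\lambda'-1)^2\|x_i-y_i\|^2-\lambda'\bigl(2(\lambda'-1)(1+\gamma'\sigma_i)-\lambda'\bigr)\|u_i-v_i\|^2,
\]
after collecting the cross term $-2\lambda'\inner{x_i-y_i}{(x_i-u_i)-(y_i-v_i)}$ and using $x_i-y_i-(u_i-v_i)=(x_i-u_i)-(y_i-v_i)$ once more; I would keep track of the algebra using \eqref{eq:identity_squarednorm}. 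Summing over $i=1,\dots,m-1$ and noting $\sum_i\|p_i-q_i\|^2=\|\u'-\v'\|^2$, $\sum_i\|x_i-y_i\|^2=\|\x-\y\|^2$ gives exactly the claimed bound — though I note the statement has $\lambda$ where I would expect $\lambda'$ inside the coefficient, so part of the task is to check which relaxation parameter is intended (under \eqref{eq:parameters_basic_requirement_dual}-type relations $\lambda$ and $\lambda'$ are linked, so this may just be a typo to reconcile).

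For part (ii), the structure of $\G^{-1}$ is different: by \cref{lemma:inverseformula}(ii), $\G^{-1}(\u)=\{(a,\dots,a):a\in A_m^{-1}(\sum_i u_i)\}$, and its resolvent is the diagonal map \eqref{eq:G_resolvent}. I would set $\u=J_{\gamma'\G^{-1}}(\x)$, so $\u=(u,\dots,u)$ with $u=J_{\frac{\gamma'}{m-1}A_m}\!\bigl(\frac{1}{m-1}\sum_i x_i\bigr)$, and similarly $\v=(v,\dots,v)$. The governing inclusion is $\frac{1}{m-1}\sum_i x_i-u\in\frac{\gamma'}{m-1}A_m^{-1}(\cdots)$; more directly, using $\G^{-1}$ is $(m-1)\sigma_m$-monotone (which is what \eqref{eq:weirdmonotonicityofGinverse} in the proof of \cref{prop:G_monotone} establishes), we get for $(\x-\u)/\gamma'\in\G^{-1}(\u)$ and the analogous pair that $\inner{\u-\v}{(\x-\u)-(\y-\v)}\ge\gamma'\sigma_m\|\sum_i(u_i-v_i)\|^2$ — wait, more carefully: the $(m-1)\sigma_m$ factor already accounts for one power, and since $\u-\v=(u-v,\dots,u-v)$ we have $\|\sum_i(u_i-v_i)\|^2=(m-1)^2\|u-v\|^2=(m-1)\|\u-\v\|^2$; I would run the same expansion of $\|\u'-\v'\|^2$ as in part (i) but now, crucially, \emph{without} splitting into coordinates, keeping the full-vector inner product, so that the ``monotonicity bonus'' term stays as $-2\lambda'(\lambda'-1)\gamma'\sigma_m\|\sum_i(u_i-v_i)\|^2$ and the remaining quadratic in $\u-\v$ collapses to $\lambda'(2-\lambda')\|\u-\v\|^2$ exactly because the extra $\gamma'$-free part of the coefficient has no $\sigma$-weight. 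The identity $p-q=(\x-\y)-\lambda'\bigl((\x-\u)-(\y-\v)\bigr)$ with $p=\u'$, $q=\v'$ expanded through \eqref{eq:identity_squarednorm} delivers the three listed terms.

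The main obstacle I anticipate is bookkeeping the quadratic expansion so that the coefficient of $\|u_i-v_i\|^2$ (resp.\ $\|\u-\v\|^2$ and $\|\sum_i(u_i-v_i)\|^2$) comes out precisely as stated, since there are several equivalent ways to regroup the cross terms and it is easy to land on $(\lambda'-1)^2$ versus $\lambda'(\lambda'-1)$-type coefficients or to mis-split the monotone part between the plain norm and the aggregated norm in part (ii); the cleanest route is to first write $\|\u'-\v'\|^2$ as $\|(\x-\y)-\lambda'\,\bm\rho\|^2$ where $\bm\rho\coloneqq(\x-\u)-(\y-\v)$, expand to $\|\x-\y\|^2-2\lambda'\inner{\x-\y}{\bm\rho}+\lambda'^2\|\bm\rho\|^2$, then substitute $\x-\y=\bm\rho+(\u-\v)$ into the cross term to obtain $-2\lambda'\|\bm\rho\|^2-2\lambda'\inner{\u-\v}{\bm\rho}$, and finally bound $\inner{\u-\v}{\bm\rho}$ below by the appropriate comonotonicity constant while leaving $\|\bm\rho\|^2\ge0$ to be absorbed — this isolates cleanly which term each constant attaches to. A secondary point is confirming the $\lambda$ versus $\lambda'$ discrepancy in part (i); I would simply carry the computation symbolically with the relaxation parameter appearing in $J^{\lambda'}$ and report the honest coefficient.
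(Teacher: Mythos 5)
Your argument is essentially the paper's own: for (i) you use the resolvent inclusion $x_i-u_i\in\gamma' A_i^{-1}(u_i)$ together with the $\sigma_i$-monotonicity of $A_i^{-1}$ (equivalently, $\sigma_i$-comonotonicity of $A_i$), for (ii) the aggregated estimate \eqref{eq:weirdmonotonicityofGinverse} applied to $(\x-\u)/\gamma'\in\G^{-1}(\u)$, and you feed these into the quadratic expansion of $\norm{\u'-\v'}^2$; your per-coordinate bookkeeping in (i) is equivalent to the paper's global computation, and your reading of the $\lambda$ in the statement as a typo for $\lambda'$ is correct (the form invoked in the proof of \cref{lemma:composition_of_relaxed_resolvents} confirms it).

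Two caveats. First, the ``cleanest route'' you advocate at the end does not produce the stated coefficients: expanding $\norm{(\x-\y)-\lambda'\bm{\rho}}^2$ and substituting $\x-\y=\bm{\rho}+(\u-\v)$ only in the cross term leaves $\norm{\x-\y}^2+\lambda'(\lambda'-2)\norm{\bm{\rho}}^2-2\lambda'\inner{\u-\v}{\bm{\rho}}$, and ``absorbing'' $\norm{\bm{\rho}}^2\geq 0$ is legitimate only when $\lambda'\leq 2$ and in any case yields coefficient $1$ rather than $(\lambda'-1)^2$ on $\norm{\x-\y}^2$; you must eliminate $\bm{\rho}$ entirely, i.e.\ expand $\norm{\u'-\v'}^2=(\lambda')^2\norm{\u-\v}^2-2\lambda'(\lambda'-1)\inner{\u-\v}{\x-\y}+(\lambda'-1)^2\norm{\x-\y}^2$ and bound $\inner{\u-\v}{\x-\y}\geq\norm{\u-\v}^2+\gamma'\sum_{i}\sigma_i\norm{u_i-v_i}^2$ (resp.\ $\geq\norm{\u-\v}^2+\gamma'\sigma_m\norm{\sum_i(u_i-v_i)}^2$), which is what the paper does and what your main computation reduces to after your ``second substitution''. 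Second, your description of $J_{\gamma'\G^{-1}}(\x)$ as the diagonal point $(u,\dots,u)$ with $u=J_{\frac{\gamma'}{m-1}A_m}\bigl(\frac{1}{m-1}\sum_i x_i\bigr)$ confuses it with $J_{\delta\G}$ in \eqref{eq:Gresolvent}: for $\G^{-1}$ it is $\x-\u=\gamma'(a,\dots,a)$ that is diagonal, not $\u$ itself, so $\u-\v$ need not have equal components. Fortunately your estimate only uses $(\x-\u)/\gamma'\in\G^{-1}(\u)$ and \eqref{eq:weirdmonotonicityofGinverse}, so the inequality in (ii) stands; simply delete the diagonal claim and the ensuing identity $\norm{\sum_i(u_i-v_i)}^2=(m-1)\norm{\u-\v}^2$, which is neither needed nor valid here.
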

\begin{proof}
    First, note that given $(\x,\u)$ and $(\y,\v)$ and defining  ${\u}'\coloneqq \lambda' \u + (1-\lambda')\x$ and ${\v}'\coloneqq \lambda' \v + (1-\lambda')\y$, we have 
    \begin{align}
        \norm{\u'-\v'}^2 = (\lambda')^2 \norm{\u - \v}^2  - 2\lambda' (\lambda'-1) \inner{\u-\v}{\x-\y} + (\lambda'-1)^2\norm{\x - \y}^2.
        \label{eq:u'-v'}
    \end{align}
    To prove (i), suppose $(\x,\u),(\y,\v)\in \gra (J_{\gamma' \F^{-1}})$. Let $\a\in \F^{-1}(\u)$ and $\b\in \F^{-1}(\v)$ such that $\x = \u + \gamma' \a$ and $\y = \v + \gamma' \b$. Then 
    \begin{align}
        \inner{\u-\v}{\x-\y}  & = \inner{\u - \v }{\u - \v + \gamma' (\a - \b )} \notag \\
        & = \norm{\u-\v}^2 + \gamma' \inner{\u-\v}{\a-\b} \label{eq:innprod_2nd} \\
        & = \norm{\u-\v}^2 + \gamma' \sum_{i=1}^{m-1} \inner{u_i-v_i}{a_i-b_i} \notag \\
        & \geq \norm{\u-\v}^2 + \gamma' \sum_{i=1}^{m-1}\sigma_i\norm{u_i-v_i}^2 ,\label{eq:innprod_lastline}
    \end{align}
where the last inequality holds by noting that $(a_i,u_i)\in \gra (A_i)$ (\cref{lemma:inverseformula}(i)) and $\sigma_i$-comonotonicity of $A_i$. Combining \eqref{eq:innprod_lastline} with \eqref{eq:u'-v'} proves part (i). The proof of (ii) is similar. For $(\x,\u),(\y,\v)\in \gra (J_{\gamma' \G^{-1}})$, we let $\a\in \G^{-1}(\u)$ and $\b\in \G^{-1}(\v)$ such that $\x = \u + \gamma' \a$ and $\y = \v + \gamma' \b$. From \eqref{eq:innprod_2nd} and \eqref{eq:weirdmonotonicityofGinverse}, we have 
    \begin{equation*}
        \inner{\u-\v}{\x-\y} \geq \norm{\u-\v}^2 + \gamma' \sigma_m \norm{\sum_{i=1}^{m-1} (u_i-v_i)}^2.
    \end{equation*}
The claim holds by combining this with \eqref{eq:u'-v'}.
\smartqedmark \end{proof}

\begin{lemma}
\label{lemma:composition_of_relaxed_resolvents}
   Let $\u,\bar{\u}\in \H^{m-1}$ and let $(\gamma',\delta',\lambda',\mu')\in \Re^4_{++}$ satisfy \eqref{eq:dual_parameters_identity}. Suppose that $J_{\gamma'\F^{-1}}$ and $J_{\delta' \G^{-1}}$ are single-valued with full domain, and denote $\v =(v_1,\dots,v_{m-1})\coloneqq J_{\gamma' \F'}(\u)$ and $ \bar{\v} =(\bar{v}_1,\dots,\bar{v}_{m-1}) \coloneqq J_{\gamma' \F'}(\bar{\u})$. Further, denote $\s  \coloneqq   J_{\gamma' \F'}^{\lambda'}(\u)$ and $ \bar{\s} \coloneqq  J_{\gamma' \F'}^{\lambda'}(\bar{\u})$, and let $\w = (w_1,\dots,w_{m-1})\coloneqq J_{\delta' {\G^{-1}}} \left( \s \right)$, and $\bar{\w}= (\bar{w}_1,\dots,\bar{w}_{m-1})\coloneqq J_{\delta' {\G^{-1}}} \left( \s \right)$. Then 
    \begin{align}
       &  \norm{J_{\delta' {\G}'}^{\mu'}J_{\gamma' {\F}'}^{\lambda'}(\u) - J_{\delta' {\G}'}^{\mu'}J_{\gamma' {\F}'}^{\lambda'}(\bar{\u})}^2  \notag \\
        & \leq\norm{\u-\bar{\u}}^2 - \sum_{i=1}^{m-1} \mu' (2+2\gamma'\sigma_i-\mu')\norm{v_i-\bar{v}_i}^2 \notag \\
        & \quad  +\mu'(2-\mu')\norm{\w -\bar{\w}}^2 -2\mu'\gamma'\sigma_m \norm{\sum_{i=1}^{m-1}(w_i-\bar{w}_i)}^2 . \label{eq:composition_ineq}
    \end{align}
Consequently, denoting $\R \coloneqq \bfId - T_{\F',\G'}$, 
  \begin{align}
       \norm{T_{\F',\G'}(\u)-T_{\F',\G'}(\bar{\u})}^2&  \leq \norm{\u-\bar{\u}}^2 - \frac{1-\kappa}{\kappa} \norm{\R(\u) - \R(\bar{\u})}^2 \notag \\
        & - \kappa \sum_{i=1}^{m-1} \mu' (2+2\gamma'\sigma_i-\mu')\norm{v_i-\bar{v}_i}^2 \notag \\
        &  +\kappa \mu'(2-\mu')\norm{\w -\bar{\w}}^2 -2\kappa \mu'\gamma'\sigma_m \norm{\sum_{i=1}^{m-1}(w_i-\bar{w}_i)}^2 . \label{eq:Tnonexpansive}
    \end{align}
\end{lemma}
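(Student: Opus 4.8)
The plan is to obtain \eqref{eq:composition_ineq} by chaining the two estimates of \cref{lemma:properties_reflected_resolvents} through the composition $J_{\delta'{\G}'}^{\mu'}J_{\gamma'{\F}'}^{\lambda'}$, and then to deduce \eqref{eq:Tnonexpansive} from \eqref{eq:composition_ineq} via the elementary identity \eqref{eq:identity_squarednorm}. Before doing so I would record the consequences of \eqref{eq:dual_parameters_identity} that are used throughout: since $\delta'=\gamma'(\lambda'-1)>0$ with $\gamma'>0$ we get $\lambda'>1$, hence $\mu'-1=1/(\lambda'-1)>0$ and $\mu'>1$; moreover $\mu'=\lambda'/(\lambda'-1)$ and $(\mu'-1)\delta'=\gamma'$. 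Also, $\G'=\G^{-1}$ together with the single-valued/full-domain hypothesis on $J_{\delta'\G^{-1}}$ makes $J_{\delta'\G'}^{\mu'}$ well defined, and since $\F'=-\F^{-1}\circ(-\bfId)$, the identity \eqref{eq:resolvent_inverse} gives $J_{\gamma'\F'}(\u)=-J_{\gamma'\F^{-1}}(-\u)$, hence $J_{\gamma'\F'}^{\lambda'}(\u)=-J_{\gamma'\F^{-1}}^{\lambda'}(-\u)$, so $J_{\gamma'\F'}^{\lambda'}$ is single-valued with full domain and $\s,\bar{\s},\w,\bar{\w}$ and the composed values are well defined.

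For the inner step, applying \cref{lemma:properties_reflected_resolvents}(i) at the points $-\u,-\bar{\u}$ — all quantities entering through squared norms, so the sign flip above is immaterial, and the resolvent value associated with $J_{\gamma'\F'}^{\lambda'}(\u)$ being $-\v$ — yields
\[
\norm{\s-\bar{\s}}^2 \leq (\lambda'-1)^2\norm{\u-\bar{\u}}^2 - \lambda'\sum_{i=1}^{m-1}\bigl(2(\lambda'-1)(1+\gamma'\sigma_i)-\lambda'\bigr)\norm{v_i-\bar{v}_i}^2 .
\]
For the outer step, applying \cref{lemma:properties_reflected_resolvents}(ii) with $(\gamma',\lambda')$ there replaced by $(\delta',\mu')$ (its proof uses no relation between those parameters) at the points $\s,\bar{\s}$, and noting $J_{\delta'\G'}^{\mu'}(\s)=J_{\delta'\G'}^{\mu'}J_{\gamma'\F'}^{\lambda'}(\u)$, gives
\[
\norm{J_{\delta'\G'}^{\mu'}J_{\gamma'\F'}^{\lambda'}(\u)-J_{\delta'\G'}^{\mu'}J_{\gamma'\F'}^{\lambda'}(\bar{\u})}^2 \leq (\mu'-1)^2\norm{\s-\bar{\s}}^2 + \mu'(2-\mu')\norm{\w-\bar{\w}}^2 - 2\mu'(\mu'-1)\delta'\sigma_m\norm{\sum_{i=1}^{m-1}(w_i-\bar{w}_i)}^2 .
\]
Substituting the first inequality into the second and simplifying with \eqref{eq:dual_parameters_identity} yields \eqref{eq:composition_ineq}: the factor $(\mu'-1)^2(\lambda'-1)^2$ collapses to $1$; from $\mu'=\lambda'/(\lambda'-1)$ one checks $(\mu'-1)^2\lambda'\bigl(2(\lambda'-1)(1+\gamma'\sigma_i)-\lambda'\bigr)=\mu'(2+2\gamma'\sigma_i-\mu')$; and $2\mu'(\mu'-1)\delta'=2\mu'\gamma'$.

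Finally, for \eqref{eq:Tnonexpansive} I would write $T_{\F',\G'}=(1-\kappa)\bfId+\kappa Q$ with $Q\coloneqq J_{\delta'\G'}^{\mu'}J_{\gamma'\F'}^{\lambda'}$ (this is \eqref{eq:Tf'g'}), so that $T_{\F',\G'}(\u)-T_{\F',\G'}(\bar{\u})=(1-\kappa)(\u-\bar{\u})+\kappa(Q(\u)-Q(\bar{\u}))$; applying \eqref{eq:identity_squarednorm} with $\alpha=1-\kappa$, $\beta=\kappa$, $\alpha+\beta=1$ gives
\[
\norm{T_{\F',\G'}(\u)-T_{\F',\G'}(\bar{\u})}^2 = (1-\kappa)\norm{\u-\bar{\u}}^2 + \kappa\norm{Q(\u)-Q(\bar{\u})}^2 - \kappa(1-\kappa)\norm{(\u-\bar{\u})-(Q(\u)-Q(\bar{\u}))}^2 .
\]
Since $\R=\bfId-T_{\F',\G'}$ gives $\R(\u)=\kappa(\u-Q(\u))$, the last term equals $\tfrac{1-\kappa}{\kappa}\norm{\R(\u)-\R(\bar{\u})}^2$; inserting \eqref{eq:composition_ineq} to bound $\norm{Q(\u)-Q(\bar{\u})}^2$ and combining the two $\norm{\u-\bar{\u}}^2$ terms produces \eqref{eq:Tnonexpansive}. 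The computations are elementary; the only point that demands care is the bookkeeping of \eqref{eq:dual_parameters_identity}, in particular the identities $\mu'=\lambda'/(\lambda'-1)$ and $(\mu'-1)\delta'=\gamma'$ that bring the coefficients into the stated clean form, together with correctly tracking the sign flip coming from $\F'=-\F^{-1}\circ(-\bfId)$ (harmless, since every relevant quantity enters through a squared norm).
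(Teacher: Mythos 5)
Your proposal is correct and follows essentially the same route as the paper: the inner estimate from \cref{lemma:properties_reflected_resolvents}(i) applied at $-\u,-\bar{\u}$ (the sign flip being absorbed by the squared norms), the outer estimate from part (ii) with parameters $(\delta',\mu')$, the coefficient simplifications via \eqref{eq:dual_parameters_identity}, and finally \eqref{eq:identity_squarednorm} together with $\R=\kappa(\bfId-J_{\delta'\G'}^{\mu'}J_{\gamma'\F'}^{\lambda'})$ to pass from \eqref{eq:composition_ineq} to \eqref{eq:Tnonexpansive}. No gaps.
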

\begin{proof}
Since $\G'=\G^{-1}$, we have 
\begin{align}
    \norm{J_{\delta' {\G}'}^{\mu'}J_{\gamma' {\F}'}^{\lambda'}(\u) - J_{\delta' {\G}'}^{\mu'}J_{\gamma' {\F}'}^{\lambda'}(\bar{\u})}^2 &  = \norm{J_{\delta' {\G^{-1}}}^{\mu'} \left( \s \right) - J_{\delta' {\G^{-1}}}^{\mu'} \left(\bar{\s}\right) }^2 .\label{eq:JFG_diff}
\end{align}
By \cref{lemma:properties_reflected_resolvents}(ii), we have
    \begin{align}
        & \norm{J_{\delta' {\G^{-1}}}^{\mu'} \left( \s \right) - J_{\delta' {\G^{-1}}}^{\mu'} \left(\bar{\s}\right) }^2\notag \\
        & \leq (\mu' -1 )^2 \norm{\s - \bar{\s}}^2  +\mu'(2-\mu')\norm{\w -\bar{\w}}^2 -2\mu'(\mu'-1)\delta'\sigma_m \norm{\sum_{i=1}^{m-1}(w_i-\bar{w}_i)}^2 \notag \\
        & = (\mu' -1 )^2 \norm{\s - \bar{\s}}^2  +\mu'(2-\mu')\norm{\w -\bar{\w}}^2 -2\mu'\gamma'\sigma_m \norm{\sum_{i=1}^{m-1}(w_i-\bar{w}_i)}^2 ,\label{eq:JdeltamuG_diff}
    \end{align}
where the last line follows from \eqref{eq:dual_parameters_identity}. Meanwhile, note that since $\F' = -\F \circ (-\bfId)$, then $J_{\gamma' \F' } (\u) = -J_{\gamma' \F^{-1}}(-\u)$ for any $\u \in \H^{m-1}$, and therefore $ J_{\gamma' \F'}^{\lambda'} (\x) = -J_{\gamma' \F^{-1}}^{\lambda'}(-\x)\quad \forall \x \in \H^{m-1}. $
Meanwhile, we have from \cref{lemma:inverseformula}(i) and \eqref{eq:Fresolvent} that $J_{\gamma' \F^{-1}} (\x) = J_{\gamma' A_1^{-1}}(x_1) \times \cdots \times J_{\gamma' A_{m-1}^{-1}}(x_{m-1})$. By \cref{lemma:properties_reflected_resolvents}(i), we have 
\ifdefined\submit
    \begin{align*}
        & \norm{\s-\bar{\s}}^2 = \norm{ J_{\gamma' {\F}^{-1}}^{\lambda'}(-\u) -  J_{\gamma' {\F}^{-1}}^{\lambda'}(-\bar{\u})}^2 \notag  \\ 
        & \leq (\lambda'-1)^2\norm{\u-\bar{\u}}^2 \notag \\
        & \quad - \lambda' \sum_{i=1}^{m-1} (2(\lambda'-1)(1+\gamma'\sigma_i)-\lambda')
        \norm{J_{\gamma' A_i^{-1}}(-u_i) - J_{\gamma' A_i^{-1}}(-\bar{u}_i)}^2\notag \\
       & =  (\lambda'-1)^2\norm{\u-\bar{\u}}^2 - \lambda' \sum_{i=1}^{m-1} (2(\lambda'-1)(1+\gamma'\sigma_i)-\lambda')\norm{v_i-\bar{v}_i}^2.
    \end{align*}
\else
    \begin{align*}
        \norm{\s-\bar{\s}}^2 & = \norm{ J_{\gamma' {\F}^{-1}}^{\lambda'}(-\u) -  J_{\gamma' {\F}^{-1}}^{\lambda'}(-\bar{\u})}^2 \notag  \\ 
        & \leq (\lambda'-1)^2\norm{\u-\bar{\u}}^2 - \lambda' \sum_{i=1}^{m-1} (2(\lambda'-1)(1+\gamma'\sigma_i)-\lambda')
        \norm{J_{\gamma' A_i^{-1}}(-u_i) - J_{\gamma' A_i^{-1}}(-\bar{u}_i)}^2\notag \\
       & =  (\lambda'-1)^2\norm{\u-\bar{\u}}^2 - \lambda' \sum_{i=1}^{m-1} (2(\lambda'-1)(1+\gamma'\sigma_i)-\lambda')\norm{v_i-\bar{v}_i}^2.
    \end{align*}
\fi 
Multiplying both sides by $(\mu'-1)^2$ and using \eqref{eq:dual_parameters_identity}, we obtain
    \begin{align}
       &  (\mu'-1)^2\norm{\s-\bar{\s}}^2 \notag  \\ 
        & \leq  \norm{\u-\bar{\u}}^2 - \lambda'(\mu'-1)^2 \sum_{i=1}^{m-1} (2(\lambda'-1)(1+\gamma'\sigma_i)-\lambda')\norm{v_i-\bar{v}_i}^2 \notag \\
        &  = \norm{\u-\bar{\u}}^2 - \sum_{i=1}^{m-1} \mu' (2+2\gamma'\sigma_i-\mu')\norm{v_i-\bar{v}_i}^2 \label{eq:JgammaF_diff} ,
    \end{align}
where the last line follows by writing 
\[\lambda'(\mu'-1)^2 (2(\lambda'-1)(1+\gamma'\sigma_i)-\lambda') = (\lambda'(\mu'-1)) \cdot (2(\mu'-1)(\lambda-1)(1+\gamma'\sigma_i) - \lambda'(\mu'-1).\]
Combining \eqref{eq:JgammaF_diff} with \eqref{eq:JdeltamuG_diff} and \eqref{eq:JFG_diff} gives \eqref{eq:composition_ineq}.  To prove \eqref{eq:Tnonexpansive}, note that \begin{align}
        \norm{T_{\F',\G'}(\u) - T_{\F',\G'}(\bar{\u})}^2 & = (1-\kappa) \norm{\u-\bar{\u}}^2 + \kappa \norm{J_{\delta' {\G}'}^{\mu'} J_{\gamma' {\F}'}^{\lambda'}(\u) - J_{\delta' {\G}'}^{\mu'} J_{\gamma' {\F}'}^{\lambda'}(\bar{\u})}^2 \notag \\
        &- (1-\kappa)\kappa \norm{\left(\u - J_{\delta' {\G}'}^{\mu'} J_{\gamma' {\F}'}^{\lambda'}(\u) \right) - \left( \bar{\u}- J_{\delta' {\G}'}^{\mu'} J_{\gamma' {\F}'}^{\lambda'}(\bar{\u}\right)}^2 \notag \\ 
         & = (1-\kappa) \norm{\u-\bar{\u}}^2 + \kappa \norm{J_{\delta' {\G}'}^{\mu'} J_{\gamma' {\F}'}^{\lambda'}(\u) - J_{\delta' {\G}'}^{\mu'} J_{\gamma' {\F}'}^{\lambda'}(\bar{\u})}^2 \notag \\
        & \quad - \frac{1-\kappa}{\kappa} \norm{(\bfId - T_{\F',\G'})(\u) - (\bfId - T_{\F',\G'})(\bar{\u})}^2 , \notag 
    \end{align}
    where the first equality holds by \eqref{eq:identity_squarednorm}, and the second holds by using the identity $\bfId - T_{\F',\G'} = \kappa (\bfId - J_{\delta' {\G}'}^{\mu'}J_{\gamma' {\F}'}^{\lambda'})$ which can be derived from \eqref{eq:Tf'g'}. The above inequality together with \eqref{eq:composition_ineq} gives \eqref{eq:Tnonexpansive}. 
\smartqedmark \end{proof}

\begin{lemma}
\label{lemma:Tnonexpansive2}
   Suppose that the hypotheses of \cref{lemma:composition_of_relaxed_resolvents} hold. Let $\theta_1,\dots,\theta_{m-1}$ be nonzero real numbers such that $\sum_{i=1}^{m-1} \frac{1}{\theta_i}=1$, and for $i=1,\dots,m-1$,
   \ifdefined\submit
   \begin{equation}
    \begin{array}{lll}
         &  \alpha_i \coloneqq 2\gamma'\sigma_i  + 2-\mu'  ,    & \qquad \beta_i \coloneqq 2\gamma'\sigma_m\theta_i-2+\mu' , \\
         &  \nu_i  \coloneqq \sigma_i + \sigma_m\theta_i , &   \qquad  \omega_i  \coloneqq  1-\kappa + \frac{\alpha_i\beta_i}{2\gamma'\mu'\nu_i}.
    \end{array}
    \label{eq:coefficients_and_parameters}
   \end{equation}
   \else 
   \begin{equation}
   \begin{array}{rcl}
       \alpha_i &\coloneqq& 2\gamma'\sigma_i  + 2-\mu'  \\
       \beta_i & \coloneqq &2\gamma'\sigma_m\theta_i-2+\mu' \\
       \nu_i & \coloneqq &\sigma_i + \sigma_m\theta_i \\ 
       \omega_i & \coloneqq&  1-\kappa + \frac{\alpha_i\beta_i}{2\gamma'\mu'\nu_i}.
   \end{array}
   \label{eq:coefficients_and_parameters}
   \end{equation}
   \fi 
    Then 
   \begin{align}
       \norm{T_{\F',\G'}(\u)-T_{\F',\G'}(\bar{\u})}^2&  \leq \norm{\u-\bar{\u}}^2 - \frac{1}{\kappa}\sum_{i=1}^{m-1} \omega_i \norm{R_i(\u)-R_i(\bar{\u}}^2 \notag \\
        & - \frac{\mu\kappa}{2\gamma'}\sum_{i=1}^{m-1} \frac{1}{\nu_i}\norm{\alpha_i(v_i-\bar{v}_i) + \beta_i(w_i-\bar{w}_i)}^2 \notag \\
        & +2\gamma'\sigma_m \sum_{1\leq i<j<m-1} \frac{1}{\theta_i\theta_j}\norm{\theta_i (w_i-\bar{w}_i) - \theta_j (w_j-\bar{w}_j)}^2 . \notag 
    \end{align}
    
\end{lemma}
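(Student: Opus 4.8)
The plan is to start from the estimate \eqref{eq:Tnonexpansive} of \cref{lemma:composition_of_relaxed_resolvents} and regroup its per-index quadratic terms into the three families appearing in the conclusion: the $\norm{R_i(\u)-R_i(\bar{\u})}^2$ terms, the $\norm{\alpha_i(v_i-\bar{v}_i)+\beta_i(w_i-\bar{w}_i)}^2$ terms, and the cross-differences $\norm{\theta_i(w_i-\bar{w}_i)-\theta_j(w_j-\bar{w}_j)}^2$. Throughout I abbreviate $p_i\coloneqq v_i-\bar{v}_i$ and $q_i\coloneqq w_i-\bar{w}_i$.

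First I would apply \cref{lemma:squaredsum_to_sumsquared} with the weights $\theta_1,\dots,\theta_{m-1}$ (which satisfy $\sum_i 1/\theta_i=1$) to the term $\norm{\sum_i q_i}^2$ in \eqref{eq:Tnonexpansive}. This immediately produces the cross-difference sum $\sum_{i<j}\frac{1}{\theta_i\theta_j}\norm{\theta_i q_i-\theta_j q_j}^2$ (the last term of the conclusion, up to the overall factor carried along from \eqref{eq:Tnonexpansive}) together with a diagonal part $\sum_i\theta_i\norm{q_i}^2$. Merging that diagonal part with the term $\kappa\mu'(2-\mu')\norm{\w-\bar{\w}}^2=\kappa\mu'(2-\mu')\sum_i\norm{q_i}^2$ already present, the coefficient of $\norm{q_i}^2$ becomes $\kappa\mu'\big((2-\mu')-2\gamma'\sigma_m\theta_i\big)=-\kappa\mu'\beta_i$ by the definition of $\beta_i$; similarly the coefficient of $\norm{p_i}^2$ is $-\kappa\mu'(2+2\gamma'\sigma_i-\mu')=-\kappa\mu'\alpha_i$. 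Thus \eqref{eq:Tnonexpansive} rewrites as $\norm{\u-\bar{\u}}^2-\frac{1-\kappa}{\kappa}\sum_i\norm{R_i(\u)-R_i(\bar{\u})}^2-\kappa\mu'\sum_i\big(\alpha_i\norm{p_i}^2+\beta_i\norm{q_i}^2\big)$ plus the cross-difference term.

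Next, for each $i$ I would split the pair $\alpha_i\norm{p_i}^2+\beta_i\norm{q_i}^2$ using identity \eqref{eq:identity_squarednorm2} with $(\alpha,\beta)=(\alpha_i,\beta_i)$. The key algebraic observation is $\alpha_i+\beta_i=2\gamma'(\sigma_i+\sigma_m\theta_i)=2\gamma'\nu_i$, which is nonzero precisely under the (implicit) standing hypothesis $\nu_i\neq 0$, so \eqref{eq:identity_squarednorm2} gives
\[
\alpha_i\norm{p_i}^2+\beta_i\norm{q_i}^2=\frac{\alpha_i\beta_i}{2\gamma'\nu_i}\norm{p_i-q_i}^2+\frac{1}{2\gamma'\nu_i}\norm{\alpha_i p_i+\beta_i q_i}^2 .
\]
The second summand contributes exactly the middle term $-\frac{\kappa\mu'}{2\gamma'}\sum_i\frac{1}{\nu_i}\norm{\alpha_i(v_i-\bar{v}_i)+\beta_i(w_i-\bar{w}_i)}^2$ of the conclusion. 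To handle the first summand, note that from the step-by-step form \eqref{eq:adr_stepbystep_dual_bold} (equivalently \eqref{eq:T_FG'_singlevalued}) together with the parameter relations \eqref{eq:dual_parameters_identity} one obtains $\R(\u)=\bfId(\u)-T_{\F',\G'}(\u)=\kappa\mu'(\v-\w)$, hence $R_i(\u)-R_i(\bar{\u})=\kappa\mu'(p_i-q_i)$ and $\norm{p_i-q_i}^2=(\kappa\mu')^{-2}\norm{R_i(\u)-R_i(\bar{\u})}^2$. Combining the coefficient $-\frac{1-\kappa}{\kappa}$ already carried by $\norm{R_i(\u)-R_i(\bar{\u})}^2$ with the new contribution $-\kappa\mu'\frac{\alpha_i\beta_i}{2\gamma'\nu_i}(\kappa\mu')^{-2}=-\frac{1}{\kappa}\cdot\frac{\alpha_i\beta_i}{2\gamma'\mu'\nu_i}$ yields precisely $-\frac{\omega_i}{\kappa}\norm{R_i(\u)-R_i(\bar{\u})}^2$, with $\omega_i$ as defined. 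Substituting these pieces back gives the asserted inequality.

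The bulk of the argument is bookkeeping; the two points that warrant care are the merging of the $\norm{q_i}^2$-contributions in the first step (so that the coefficient becomes exactly $-\kappa\mu'\beta_i$ with no sign slip), and the identity $\R=\kappa\mu'(\v-\w)$, which rests on the relation $(1-\lambda')(1-\mu')=1$ contained in \eqref{eq:dual_parameters_identity}. Everything else is a direct application of \cref{lemma:squaredsum_to_sumsquared} and \eqref{eq:identity_squarednorm2} followed by substitution.
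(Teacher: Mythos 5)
Your proposal is correct and follows essentially the same route as the paper's proof: apply \cref{lemma:squaredsum_to_sumsquared} with the weights $\theta_i$ to $\norm{\sum_{i}(w_i-\bar{w}_i)}^2$, merge the diagonal part with the $\norm{\w-\bar{\w}}^2$ term to obtain the coefficients $-\kappa\mu'\beta_i$, split $\alpha_i\norm{v_i-\bar{v}_i}^2+\beta_i\norm{w_i-\bar{w}_i}^2$ via \eqref{eq:identity_squarednorm2} using $\alpha_i+\beta_i=2\gamma'\nu_i$, and convert $\norm{(v_i-w_i)-(\bar{v}_i-\bar{w}_i)}^2$ into $\norm{R_i(\u)-R_i(\bar{\u})}^2$ through $\R=\kappa\mu'(\v-\w)$ (the sign being immaterial since only squared norms enter), exactly as in \eqref{eq:w_part}--\eqref{eq:v+w} of the paper. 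Your retention of the factor $\kappa\mu'$ on the cross-difference term is in fact what the computation yields; since that term is nonpositive ($\sigma_m\le 0$) and is discarded in the subsequent analysis, this discrepancy with the displayed statement is harmless.
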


\begin{proof}  
Using \cref{lemma:squaredsum_to_sumsquared}, we obtain the following
\begin{align}
   &  2\gamma'\sigma_m \norm{\sum_{i=1}^{m-1}(w_i-\bar{w}_i)}^2 - (2-\mu')\norm{\w -\bar{\w}}^2\notag \\
   & = \sum_{i=1}^{m-1} \beta_i\norm{w_i-\bar{w}_i}^2 -2\gamma'\sigma_m \sum_{1\leq i<j<m-1} \frac{1}{\theta_i\theta_j}\norm{\theta_i (w_i-\bar{w}_i) - \theta_j (w_j-\bar{w}_j)}^2 .\label{eq:w_part}
\end{align}
Meanwhile, from \eqref{eq:identity_squarednorm2}, we have 
\begin{align}
    & \alpha_i\norm{v_i-\bar{v}_i}^2 +  \beta_i\norm{w_i-\bar{w}_i}^2 \notag \\
    & = \frac{\alpha_i\beta_i}{2\gamma' (\sigma_i+\sigma_m\theta_i)}\norm{(v_i-w_i) - (\bar{v}_i-\bar{w}_i)}^2 + \frac{1}{2\gamma'(\sigma_i+\sigma_m\theta_i)}\norm{\alpha_i(v_i-\bar{v}_i) + \beta_i(w_i-\bar{w}_i)}^2.\notag 
\end{align}
Hence, noting from \eqref{eq:T_FG'_singlevalued} that $R(\u) =( \bfId - T_{\F',\G'})(\u) = \kappa \mu (\w-\v)$ and $\R(\bar{\u}) = (\bfId - T_{\F',\G'})(\bar{\u}) = \kappa \mu (\bar{\w}-\bar{\v})$, we have 
\begin{align}
    & \alpha_i\norm{v_i-\bar{v}_i}^2 +  \beta_i\norm{w_i-\bar{w}_i}^2 \notag \\
    & = \frac{\alpha_i\beta_i}{2\gamma' (\sigma_i+\sigma_m\theta_i)\kappa^2\mu^2}\norm{R_i(\u) - R_i(\bar{\u})}^2 + \frac{1}{2\gamma'(\sigma_i+\sigma_m\theta_i)}\norm{\alpha_i(v_i-\bar{v}_i) + \beta_i(w_i-\bar{w}_i)}^2.\label{eq:v+w}
\end{align}
Finally, the result follows by combining \eqref{eq:Tnonexpansive}, \eqref{eq:w_part} and \eqref{eq:v+w}.
\smartqedmark \end{proof}

\begin{lemma}
\label{lemma:Theta}
    Let $\sigma_1,\dots,\sigma_m\in \Re$ such that $\sigma_i>0$ for $i=1,\dots,m-1$, $\sigma_m<0$ and $\sum_{i=1}^{m} \frac{1}{\sigma_i}<0$. Then the set
        \begin{equation}
        \Theta\coloneqq \left\lbrace \theta = (\theta_1,\dots,\theta_{m-1}) \in \Re^{m-1} : \forall i, ~\theta_i>0, ~ \sigma_i+\sigma_m\theta_i>0,~\text{and}~\sum_{i=1}^{m-1}\frac{1}{\theta_i}= 1\right\rbrace    
        \label{eq:Theta}
    \end{equation}
    is nonempty. Moreover, given $\theta\in \Theta$ and $\gamma'>0$, the following hold:
    \begin{enumerate}[(i)]
        \item The closed interval
        \begin{equation}
             \I \coloneqq [2-2\gamma'\sigma_m\bar{\theta},  2\gamma' \bar{\sigma} + 2 ] , \quad \text{where}~\bar{\theta}\coloneqq \max _{1\leq i\leq m-1}\theta_i, ~\underline{\sigma}\coloneqq \min_{1\leq i\leq m-1}\sigma_i 
             \label{eq:mu_validinterval}
        \end{equation}
    is nonempty and contained in $(2,+\infty)$. 
    \item Given $\mu'\in \I$ where $\I$ is given in \eqref{eq:mu_validinterval}, let $\lambda' \in (1,+\infty)$ and $\delta'>0$ be given by 
        \begin{equation}
             \lambda' \coloneqq  \frac{\mu'}{\mu'-1} \quad \text{and} \quad \delta' \coloneqq  \gamma'(\lambda'-1).
             \label{eq:lambda_delta}
        \end{equation}
        Then $(\gamma',\delta',\lambda',\mu')\in \Re^2_{++} \times (1,+\infty)^2$ satisfies \eqref{eq:dual_parameters_identity}.
    \item Let $(\gamma',\delta',\lambda',\mu')\in \Re^2_{++} \times (1,+\infty)^2$ be as in (ii) above. Then $J_{\gamma' \F^{-1}}$ and $J_{\delta' \G^{-1}}$ are single-valued with full domain.   
    \end{enumerate}
    
\end{lemma}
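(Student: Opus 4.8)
The plan is to verify the four assertions by direct computation, using only the constraints encoded in $\Theta$ together with the resolvent facts recalled earlier. \textbf{Nonemptiness of $\Theta$.} Since $\sigma_m<0$, the constraint $\sigma_i+\sigma_m\theta_i>0$ is the same as $\tfrac{1}{\theta_i}>\tfrac{-\sigma_m}{\sigma_i}>0$, so a vector $\theta$ with positive entries and $\sum_{i=1}^{m-1}\tfrac{1}{\theta_i}=1$ lies in $\Theta$ precisely when $\tfrac{1}{\theta_i}\in\bigl(\tfrac{-\sigma_m}{\sigma_i},+\infty\bigr)$ for every $i$. Multiplying the hypothesis $\sum_{i=1}^m\tfrac{1}{\sigma_i}<0$ by $-\sigma_m>0$ gives $\sum_{i=1}^{m-1}\tfrac{-\sigma_m}{\sigma_i}<1$, hence $c\coloneqq\tfrac{1}{m-1}\bigl(1-\sum_{i=1}^{m-1}\tfrac{-\sigma_m}{\sigma_i}\bigr)>0$, and I would exhibit the explicit point $\tfrac{1}{\theta_i}\coloneqq\tfrac{-\sigma_m}{\sigma_i}+c$, which satisfies all requirements; this shows $\Theta\neq\emptyset$.

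\textbf{Claims (i) and (ii).} Writing the endpoints of $\I$ as $2+2\gamma'(-\sigma_m)\bar\theta$ and $2+2\gamma'\bar\sigma$ (with $\bar\sigma\coloneqq\max_{1\le i\le m-1}\sigma_i$), both are $>2$ because $\gamma',-\sigma_m,\bar\theta,\bar\sigma>0$, so $\I\subseteq(2,+\infty)$. For nonemptiness, pick an index $p$ with $\theta_p=\bar\theta$; the $\Theta$-constraint at $p$ reads $(-\sigma_m)\bar\theta<\sigma_p\le\bar\sigma$, so the left endpoint is strictly below the right one and $\I$ is nonempty (indeed with nonempty interior). For (ii), with $\mu'\in\I\subseteq(2,+\infty)$ and $\lambda'\coloneqq\mu'/(\mu'-1)$ one computes $\lambda'-1=1/(\mu'-1)$, whence $\lambda'\in(1,2)$, $(\lambda'-1)(\mu'-1)=1$, and $\delta'\coloneqq\gamma'(\lambda'-1)=\gamma'/(\mu'-1)>0$; together with $\gamma'>0$ this places $(\gamma',\delta',\lambda',\mu')$ in $\Re^2_{++}\times(1,+\infty)^2$ and verifies \eqref{eq:dual_parameters_identity}.

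\textbf{Claim (iii).} By \cref{lemma:inverseformula}(i) and \eqref{eq:Fresolvent}, $J_{\gamma'\F^{-1}}$ factors coordinatewise into the $J_{\gamma' A_i^{-1}}$; each $A_i^{-1}$ is maximal $\sigma_i$-monotone by \cref{lemma:maximal_sigma_equivalent} and $1+\gamma'\sigma_i>0$ since $\sigma_i>0$, so \cref{lemma:maximal_single-valued-resolvent-monotone} gives single-valuedness with full domain. For $J_{\delta'\G^{-1}}$, $\G^{-1}$ is maximal $(m-1)\sigma_m$-monotone by \cref{prop:G_monotone} and \cref{lemma:maximal_sigma_equivalent}, so by \cref{lemma:maximal_single-valued-resolvent-monotone} it suffices to check $1+\delta'(m-1)\sigma_m>0$. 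This is where the only real wrinkle lies: $\sum_{i=1}^{m-1}\tfrac{1}{\theta_i}=1$ forces $\bar\theta\ge m-1$ (not all reciprocals can exceed $\tfrac{1}{m-1}$), so $\mu'\ge2+2\gamma'(-\sigma_m)\bar\theta\ge2+2\gamma'(-\sigma_m)(m-1)$ yields $\mu'-1>\gamma'(m-1)(-\sigma_m)$ and therefore $\delta'=\gamma'/(\mu'-1)<\bigl((m-1)(-\sigma_m)\bigr)^{-1}$, i.e. $1+\delta'(m-1)\sigma_m>0$, as needed. I expect this last step, together with the index-$p$ inequality for (i) and the explicit construction in the nonemptiness part, to be the only pieces requiring genuine thought; the rest is routine bookkeeping with the definitions in \eqref{eq:Theta}, \eqref{eq:mu_validinterval} and \eqref{eq:lambda_delta}.
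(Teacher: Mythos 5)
Your proposal is essentially correct for the statement as literally printed, and in two places it is more self-contained than the paper's own argument. For the nonemptiness of $\Theta$ you give the explicit point $1/\theta_i=-\sigma_m/\sigma_i+c$ with $c=\frac{1}{m-1}\bigl(1-\sum_{i=1}^{m-1}(-\sigma_m)/\sigma_i\bigr)>0$, whereas the paper imports nonemptiness of an auxiliary set $X$ from \cite[Proposition 4.12(iii)]{AlcantaraTakeda2025} and then sets $\theta_i=1/\delta_i$; your construction is a clean replacement. Part (ii) coincides with the paper. In part (iii), the treatment of $J_{\gamma'\F^{-1}}$ is the same in substance, and for $J_{\delta'\G^{-1}}$ you reach the required inequality $1+\delta'(m-1)\sigma_m>0$ by a genuinely different route: $\bar\theta\ge m-1$ from $\sum_i 1/\theta_i=1$, hence $\mu'-1>\gamma'(m-1)(-\sigma_m)$. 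The paper instead shows $1+\delta'\sigma_m\theta_i>0$ for each $i$ (using only the left endpoint of $\I$), divides by $\theta_i$ and sums. Both arguments are valid and of comparable length.

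The one point needing attention is (i). You resolve the undefined symbol $\bar\sigma$ as $\max_{1\le i\le m-1}\sigma_i$, and under that reading your index-$p$ argument is correct — indeed it supplies exactly the observation (the interval for the maximizing index is contained in $\I$) that the paper's one-line justification omits. However, the ``where'' clause of \eqref{eq:mu_validinterval} defines $\underline{\sigma}=\min_i\sigma_i$, and the way $\I$ is used downstream — in the proof of \cref{lemma:improvedconvergence} one needs $\alpha_i=2\gamma'\sigma_i+2-\mu'\ge 0$ for \emph{every} $i$, cf.\ \eqref{eq:coefficients_and_parameters} — shows that the intended right endpoint is $2+2\gamma'\underline{\sigma}$, i.e.\ $\I$ is the intersection of the per-coordinate intervals $[2-2\gamma'\sigma_m\theta_i,\,2+2\gamma'\sigma_i]$. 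Under that intended reading, nonemptiness of $\I$ for an \emph{arbitrary} $\theta\in\Theta$ is false: with $m=3$, $(\sigma_1,\sigma_2,\sigma_3)=(10,1,-2/5)$ and $\theta=(20,20/19)\in\Theta$, one gets $2-2\gamma'\sigma_3\bar\theta=2+16\gamma'>2+2\gamma'=2+2\gamma'\underline{\sigma}$. Your argument only controls $\sigma_p$ for the index attaining $\bar\theta$, so it cannot be repaired in that setting — but neither can the paper's (``each interval nonempty, hence $\I\neq\emptyset$'' is a non sequitur for the intersection); the claim itself requires either a restriction on $\theta$ or importing nonemptiness from the hypothesis $\min_i\kappa_i^*\ge 1$ as in \cref{thm:adaptiveDR_improved}. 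So your proof is fine relative to the printed statement, but be aware that part (i) as you (and the paper) prove it is not the version that the subsequent results actually rely on.
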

\begin{proof}
Let 
\ifdefined\submit
    $X\coloneqq \left\lbrace  (\delta_1,\dots,\delta_{m-1}) \in \Re^{m-1} : -\frac{1}{\sigma_i} - \frac{1}{\sigma_m}\delta_i >0 ~\text{and}~\sum_{i=1}^{m-1}\delta_i = 1\right\rbrace  .$
\else
      $$X\coloneqq \left\lbrace \delta = (\delta_1,\dots,\delta_{m-1}) \in \Re^{m-1} : -\frac{1}{\sigma_i} - \frac{1}{\sigma_m}\delta_i >0 ~\text{and}~\sum_{i=1}^{m-1}\delta_i = 1\right\rbrace  .$$
\fi
    Since $\sum_{i=1}^{m-1} \frac{1}{\sigma_i}<0$, we have from  \cite[Proposition 4.12(iii)]{AlcantaraTakeda2025} that $X$ is nonempty. Moreover, noting that $\sigma_m<0$ and $\sigma_i>0$ for all $i=1,\dots,m-1$, it follows that $X\subseteq (0,+\infty)^{m-1}$. Take any $\delta\in X$ and set $\theta_i = \frac{1}{\delta_i}>0$ for each $i=1,\dots, m-1$. Then  $\theta =(\theta_1,\dots,\theta_{m-1})\in (0,+\infty)^{m-1}$ belongs to $\Theta$, and so $\Theta\neq \emptyset$.
    
    Let $\theta\in \Theta$ and $\gamma'>0$. Since $\theta=(\theta_1,\dots,\theta_{m-1})\in \Theta$, then $[2-2\gamma'\sigma_m\theta_i,2\gamma'\sigma_i+2]$ is nonempty for any $i=1,\dots,m-1$. Hence, $\I\neq \emptyset$. In addition, $2-2\gamma'\sigma_m\theta_i>2$ for any $i$ since $\sigma_m<0$ and $\theta_i>0$. This proves item (i). On the other hand, note that by (i), $\mu'>1$ for any $\mu'\in \I$. Hence, $\lambda'$ given in \eqref{eq:lambda_delta} is greater than 1. From the definition of $\lambda'$ and $\delta'$, we immediately get that \eqref{eq:dual_parameters_identity} is satisfied, thus proving (ii). As for part (iii), note that the maximal monotonicity of $\F^{-1}$ immediately implies that $J_{\gamma'\F^{-1}}$ is single-valued with full domain \cref{lemma:maximalmonotone_properties}. We now prove that the same holds true for $J_{\delta'\G^{-1}}$. Indeed, for all $i=1,\dots,m-1$, we have
\ifdefined\submit
    \begin{align*}
        1+\delta' \sigma_m \theta_i & \overset{\eqref{eq:lambda_delta}}{=} 1+\frac{\gamma'\sigma_m \theta_i }{\mu'-1}  \geq 1+\frac{\gamma'\sigma_m \theta_i }{1-2\gamma'\sigma_m\theta_i}  = \frac{1-\gamma'\sigma_m\theta_i}{1-2\gamma'\sigma_m\theta_i} ,
    \end{align*} 
\else 
    \begin{align*}
        1+\delta' \sigma_m \theta_i & \overset{\eqref{eq:lambda_delta}}{=} 1+\frac{\gamma'\sigma_m \theta_i }{\mu'-1} \\
        & \geq 1+\frac{\gamma'\sigma_m \theta_i }{1-2\gamma'\sigma_m\theta_i} \\
        & = \frac{1-\gamma'\sigma_m\theta_i}{1-2\gamma'\sigma_m\theta_i} ,
    \end{align*} 
\fi 
where the inequality holds since $\sigma_m<0$ and $\mu'\in \I$. From this, we have $1+\delta'\sigma_m\theta_i>0$ for any $i$ and therefore $\frac{1}{\theta_i} + \delta'\sigma_m >0$. Summing from $i=1$ to $i=m-1$ yields $1+\delta' \sigma_m (m-1)>0$. Finally, since $\G^{-1}$ is maximal $(m{-}1)\sigma_m$-monotone by \cref{prop:G_monotone,lemma:maximal_sigma_equivalent}, the claim follows from \cref{lemma:maximal_single-valued-resolvent-monotone}. 
\smartqedmark \end{proof}

We are now ready to establish the convergence of the aDR algorithm \eqref{eq:adr_stepbystep_dual_bold}. 

\begin{lemma}
\label{lemma:improvedconvergence}
    Suppose that $A_i$ is maximal $\sigma_i$-comonotone for $i=1,\dots,m$ such that $\zer (A_1+\cdots + A_m)\neq 0$, $\sigma_i>0$ for $i=1,\dots,m-1$, $\sigma_m<0$, and  $\ds \sum_{i=1}^{m} \frac{1}{\sigma_i}<0$. Let $\theta\in \Theta$, with $\Theta$ given in \eqref{eq:Theta} and  $\kappa\in (0,1)$. Suppose that either one of the following holds:
    \begin{enumerate}[(a)]
        \item $\gamma'>0$, $\mu'\in \I$ where $\I$ is given in \eqref{eq:mu_validinterval}, and $(\lambda',\delta')$ are given in \eqref{eq:lambda_delta}; or
        \item $\gamma'=\delta'>0$,  $\mu'=\lambda'=2$, and $1-\kappa + \gamma' \frac{\sigma_i\sigma_m\theta_i}{\sigma_i+\sigma_m\theta_i}>0$ for all $i=1,\dots,m-1$.
    \end{enumerate}
    Let $\{ (\u^k,\v^k,\w^k)\}$ be a sequence generated by \eqref{eq:adr_stepbystep_dual_bold}. Then the following hold:
    \begin{enumerate}[(i)]
        \item There exists $\bar{\u}\in\Fix (T_{\F',\G'})$ with $J_{\gamma'\F'}(\bar{\u})\in \zer (\F'+\G')$ such that $\u^k\toweak \bar{\u}$;
        \item $\norm{\u^k-\u^{k+1}}=o(1/\sqrt{k})$ and $\norm{\v^k-\v^{k+1}}=o(1/\sqrt{k})$as $k\to\infty$;
        \item The shadow sequences $\{\v^k\}$ and $\{\w^k\}$ converge strongly to $J_{\gamma'\F'}(\bar{\u})$ and $ \zer (\F'+\G') = \{J_{\gamma'\F'}(\bar{\u}) \} $. 
    \end{enumerate}
\end{lemma}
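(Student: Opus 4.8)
The plan is to \emph{assemble} the claim from the single estimate in \cref{lemma:Tnonexpansive2}, treating $T_{\F',\G'}$ as a nonexpansive operator whose displacement $\R=\bfId-T_{\F',\G'}$ is demiclosed at $0$, and then to translate Fej\'er monotonicity and summability of the residuals into weak convergence of $\{\u^k\}$ and strong convergence of the shadows. The first task is to put the hypotheses of \cref{lemma:composition_of_relaxed_resolvents} and \cref{lemma:Tnonexpansive2} in force. In both cases (a) and (b) the quadruple $(\gamma',\delta',\lambda',\mu')$ satisfies \eqref{eq:dual_parameters_identity} (for (a) this is \cref{lemma:Theta}(ii); for (b) it is immediate since $(2-1)(2-1)=1$ and $\delta'=\gamma'(2-1)$), and $J_{\gamma'\F^{-1}},J_{\delta'\G^{-1}}$ are single-valued with full domain (\cref{lemma:Theta}(iii) in case (a); in case (b), $\F^{-1}$ is maximal monotone, so $J_{\gamma'\F^{-1}}$ is fine, and for $J_{\delta'\G^{-1}}$ one argues exactly as in the last paragraph of the proof of \cref{lemma:Theta} that the assumption $\omega_i>0$ forces $1+\delta'(m-1)\sigma_m>0$). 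I would also record at the outset that $\Fix(T_{\F',\G'})\neq\emptyset$: since $\F'=-\F^{-1}\circ(-\bfId)$ and $\G'=\G^{-1}$, a short computation gives $\zer(\F'+\G')\neq\emptyset\iff\zer(\F+\G)\neq\emptyset$, and the right-hand side holds by \eqref{eq:zeros_equivalence}; then \cite[Lemma 4.1(iii)]{DaoPhan2019}, applicable because the parameters obey \eqref{eq:dual_parameters_identity}, yields $\zer(\F'+\G')=J_{\gamma'\F'}(\Fix(T_{\F',\G'}))$, hence $\Fix(T_{\F',\G'})\neq\emptyset$.

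The core step is to verify that every ``correction term'' on the right-hand side of the inequality in \cref{lemma:Tnonexpansive2} is $\leq 0$. Because $\theta\in\Theta$ we have $\nu_i=\sigma_i+\sigma_m\theta_i>0$, and since $\sigma_m<0$ the $\theta$-cross sum and the sum $\sum_i\tfrac1{\nu_i}\norm{\alpha_i(v_i-\bar v_i)+\beta_i(w_i-\bar w_i)}^2$ contribute with the right sign; it remains to see $\omega_i>0$. In case (b) this is exactly the stated hypothesis, since there $\alpha_i=2\gamma'\sigma_i$, $\beta_i=2\gamma'\sigma_m\theta_i$, so $\tfrac{\alpha_i\beta_i}{2\gamma'\mu'\nu_i}=\gamma'\tfrac{\sigma_i\sigma_m\theta_i}{\sigma_i+\sigma_m\theta_i}$. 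In case (a), $\mu'\in\I=[\,2-2\gamma'\sigma_m\bar\theta,\ 2\gamma'\underline\sigma+2\,]$ from \eqref{eq:mu_validinterval} gives, for each $i$, $\mu'\geq 2-2\gamma'\sigma_m\theta_i$ and $\mu'\leq 2\gamma'\sigma_i+2$ (using $\bar\theta\geq\theta_i$, $\underline\sigma\leq\sigma_i$, $\sigma_m<0$), i.e. $\beta_i\geq0$ and $\alpha_i\geq0$, whence $\tfrac{\alpha_i\beta_i}{2\gamma'\mu'\nu_i}\geq0$ and $\omega_i\geq 1-\kappa>0$. Consequently \cref{lemma:Tnonexpansive2} shows $T_{\F',\G'}$ is nonexpansive, and applying it with $\bar\u$ any element of $\Fix(T_{\F',\G'})$ — so that $R_i(\bar\u)=0$ and $\bar v_i=\bar w_i$, where $\bar v_i,\bar w_i$ are the $i$-th components of $J_{\gamma'\F'}(\bar\u)$ and $J_{\delta'\G'}J_{\gamma'\F'}^{\lambda'}(\bar\u)$ — gives $\norm{\u^{k+1}-\bar\u}^2\leq\norm{\u^k-\bar\u}^2-\tfrac1\kappa\sum_i\omega_i\norm{R_i(\u^k)}^2-\tfrac{\mu'\kappa}{2\gamma'}\sum_i\tfrac1{\nu_i}\norm{\alpha_i(v_i^k-\bar v_i)+\beta_i(w_i^k-\bar w_i)}^2$ minus a nonnegative multiple of the $\theta$-cross term. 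Telescoping then yields that $\{\u^k\}$ is Fej\'er monotone with respect to $\Fix(T_{\F',\G'})$ and that $\sum_k\norm{\R(\u^k)}^2<\infty$ and $\sum_k\sum_i\norm{\alpha_i(v_i^k-\bar v_i)+\beta_i(w_i^k-\bar w_i)}^2<\infty$.

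From here all three conclusions follow by routine arguments. For (ii): nonexpansiveness of $T_{\F',\G'}$ makes $\{\norm{\R(\u^k)}\}$ nonincreasing, so square-summability gives $\norm{\R(\u^k)}=o(1/\sqrt k)$, i.e. $\norm{\u^k-\u^{k+1}}=o(1/\sqrt k)$; since $\F'$ is maximal $\alpha$-monotone with $\alpha=\min_i\sigma_i>0$, $J_{\gamma'\F'}$ is a contraction and $\norm{\v^k-\v^{k+1}}\leq(1+\gamma'\alpha)^{-1}\norm{\u^k-\u^{k+1}}=o(1/\sqrt k)$. For (i): $\R(\u^k)\to0$ together with demiclosedness of $\bfId-T_{\F',\G'}$ at $0$ (Browder's principle for the nonexpansive $T_{\F',\G'}$) forces every weak cluster point of the Fej\'er-monotone sequence $\{\u^k\}$ into $\Fix(T_{\F',\G'})$, hence $\u^k\toweak\bar\u\in\Fix(T_{\F',\G'})$, and $J_{\gamma'\F'}(\bar\u)\in\zer(\F'+\G')$ by the correspondence recorded in the first paragraph. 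For (iii): from the second summable series, $\alpha_i(v_i^k-\bar v_i)+\beta_i(w_i^k-\bar w_i)\to0$; also $R_i(\u^k)\to0$ forces $v_i^k-w_i^k\to0$ (recall $\R(\u^k)$ is a nonzero multiple of $\v^k-\w^k$) and $\bar v_i=\bar w_i$, so writing $w_i^k-\bar w_i=(w_i^k-v_i^k)+(v_i^k-\bar v_i)$ yields $(\alpha_i+\beta_i)(v_i^k-\bar v_i)\to0$, and since $\alpha_i+\beta_i=2\gamma'\nu_i>0$ we get $v_i^k\to\bar v_i$ strongly, hence $\v^k\to J_{\gamma'\F'}(\bar\u)$ and $\w^k\to J_{\gamma'\F'}(\bar\u)$ strongly; finally $\zer(\F'+\G')$ is a singleton because $\F'$ is strongly monotone, so $\zer(\F'+\G')=\{J_{\gamma'\F'}(\bar\u)\}$.

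I expect the main obstacle to be twofold. First, confirming that the parameter windows in (a) and (b) genuinely make $\omega_i$ and the remaining correction coefficients have the right sign: the case-(a) argument hinges on the $\max/\min$ bookkeeping built into $\I$ in \eqref{eq:mu_validinterval}, while case (b) additionally requires deducing the single-valuedness and full-domain property of $J_{\delta'\G^{-1}}$ from $\omega_i>0$ rather than assuming it. Second, extracting \emph{strong} (not just weak) convergence of the shadow sequence: the point is that the term $\sum_i\tfrac1{\nu_i}\norm{\alpha_i(v_i^k-\bar v_i)+\beta_i(w_i^k-\bar w_i)}^2$ retained from \cref{lemma:Tnonexpansive2}, combined with $v_i^k-w_i^k\to0$, isolates $v_i^k-\bar v_i$ precisely because $\alpha_i+\beta_i=2\gamma'\nu_i>0$; everything else is bookkeeping with Fej\'er monotonicity and standard demiclosedness.
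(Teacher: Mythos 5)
Your overall route is the same as the paper's: check that $\theta\in\Theta$ makes $\nu_i>0$, that the parameter windows in (a) and (b) force $\alpha_i,\beta_i$ (resp.\ the hypothesis in (b)) to give $\omega_i>0$, that the resolvents are single-valued with full domain (via \cref{lemma:Theta}, and in case (b) via the identity $1+\gamma'\sigma_m\theta_i=\omega_i+\kappa+\gamma'\sigma_m^2\theta_i^2/\nu_i>0$), and then feed \cref{lemma:Tnonexpansive2} into the standard Fej\'er-monotonicity/demiclosedness/strong-shadow machinery; the paper simply compresses this last stage into a citation of \cite[Theorem 4.2]{DaoPhan2019}, which you instead spell out. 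Your verification of the sign conditions, the nonemptiness of $\Fix(T_{\F',\G'})$, the $o(1/\sqrt{k})$ rates, and the extraction of $v_i^k\to\bar v_i$ from $\alpha_i+\beta_i=2\gamma'\nu_i>0$ are all sound.

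The one step that does not hold as stated is the final claim in (iii) that ``$\zer(\F'+\G')$ is a singleton because $\F'$ is strongly monotone.'' Strong monotonicity of $\F'$ with modulus $\underline{\sigma}=\min_i\sigma_i$ only yields uniqueness when combined with the monotonicity modulus of $\G'$, which is $(m-1)\sigma_m<0$; under the present hypotheses ($\sum_i 1/\sigma_i<0$, $\theta\in\Theta$) the naive sum $\underline{\sigma}+(m-1)\sigma_m$ can be negative (e.g.\ $m=3$, $\sigma=(10,1,-0.9)$), so this reasoning is insufficient. The conclusion is nevertheless true, and you need it, because it is exactly what identifies the strong limit of $\{\v^k\}$ (obtained from the Fej\'er inequality written at an \emph{arbitrary} fixed point) with $J_{\gamma'\F'}(\bar{\u})$ for the weak limit $\bar{\u}$ of (i). Two repairs are available with the tools already in play: either note that for zeros $\u,\u^*$ of $\F'+\G'$ the componentwise estimates give $0\geq\sum_{i=1}^{m-1}\sigma_i\norm{u_i-u_i^*}^2+\sigma_m\norm{\sum_{i=1}^{m-1}(u_i-u_i^*)}^2\geq\sum_{i=1}^{m-1}\nu_i\norm{u_i-u_i^*}^2$, using \eqref{eq:weirdmonotonicityofGinverse}, \cref{lemma:squaredsum_to_sumsquared} with $\theta\in\Theta$, and $\sigma_m<0$, which forces $\u=\u^*$; or run your strong-convergence argument with two different fixed points $\bar{\u}_1,\bar{\u}_2$ and use uniqueness of limits to conclude that $J_{\gamma'\F'}$ is constant on $\Fix(T_{\F',\G'})$, hence $\zer(\F'+\G')=J_{\gamma'\F'}(\Fix(T_{\F',\G'}))$ is a singleton. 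With either fix your argument is complete.
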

\begin{proof}
 Let $\alpha_i,\beta_i,\nu_i$ and $\omega_i$ be given as in \eqref{eq:coefficients_and_parameters}. Since $\theta\in \Theta$, note that $\nu_i>0$ and $\theta_i\theta_j>0$ for any $i,j$. Under condition (a), $\alpha_i\geq 0$ and $\beta_i\geq 0$, and therefore $\omega_i \geq 1-\kappa >0$. Moreover, by \cref{lemma:Theta}(iii), the resolvents $J_{\gamma' \F^{-1}} $ and $J_{\delta'\G^{-1}}$ are single-valued with full domain. On the other hand, suppose that condition (b) holds. Then we have $\omega_i = 1-\kappa +\gamma' \frac{\sigma_i\sigma_m\theta_i}{\sigma_i+\sigma_m\theta_i}>0$. Moreover, 
 \[ 1+\gamma' \sigma_m\theta_i = \left( 1+\gamma' \frac{\sigma_i\sigma_m\theta_i}{\sigma_i+\sigma_m\theta_i} \right) + \gamma'\sigma_m\theta_i \left( 1- \frac{\sigma_i}{\sigma_i+\sigma_m\theta_i}\right)  = \omega_i + \kappa + \gamma'\frac{\sigma_m^2\theta_i^2}{\nu_i}>0.\]
 As in the proof of \cref{lemma:Theta}(iii), we conclude that $J_{\gamma' \F^{-1}} $ and $J_{\delta'\G^{-1}}$ are single-valued with full domain. Hence, we can now invoke \cref{lemma:Tnonexpansive2} and obtain 
  \begin{align}
       \norm{T_{\F',\G'}(\u)-T_{\F',\G'}(\bar{\u})}^2&  \leq \norm{\u-\bar{\u}}^2 - \frac{1}{\kappa}\sum_{i=1}^{m-1} \omega_i \norm{R_i(\u)-R_i(\bar{\u}}^2 \notag \\
        & - \frac{\mu\kappa}{2\gamma'}\sum_{i=1}^{m-1} \frac{1}{\nu_i}\norm{\alpha_i(v_i-\bar{v}_i) + \beta_i(w_i-\bar{w}_i)}^2 ,\notag 
        \label{eq:Tnonexpansive_simplified2}
    \end{align}
with $\omega_i,\nu_i>0$. The remaining arguments follow \cite[Theorem 4.2]{DaoPhan2019}. 
\smartqedmark \end{proof}

The main result, which is an improvement of \cref{thm:adaptiveDR}, is now a direct consequence of the above theorem and the duality established in \cref{sec:attouchthera} when $\sigma_m<0$. 
\begin{theorem}
\label{thm:adaptiveDR_improved}
      Suppose that $A_i$ is maximal $\sigma_i$-comonotone for $i=1,\dots,m$ such that $\zer (A_1+\cdots + A_m)\neq 0$, $\sigma_i>0$ for $i=1,\dots,m-1$, $\sigma_m<0$, and  $\ds \sum_{i=1}^{m} \frac{1}{\sigma_i}<0$. Let $(\gamma,\delta,\lambda,\mu)\in \Re^4_{++} $ satisfy \eqref{eq:parameters_basic_requirement_comonotone} and  $\theta\in \Theta$, where $\Theta$ is given in \eqref{eq:Theta}. 
      For $i=1,\dots,m-1$, define
     \begin{equation}
         \kappa_i^* \coloneqq \frac{4(\gamma+\sigma_i)(\delta+\sigma_m\theta_i)-(\gamma+\delta)^2}{2(\gamma+\delta)(\sigma_i+\sigma_m\theta_i)}.
         \label{eq:kappa_i}
     \end{equation}
      Suppose that $\kappa \in (0,1)$ and either one of the following holds
    \begin{enumerate}[(a)]
        \item $\ds \min_{1\leq i\leq m-1} \kappa_i^* \geq 1$; or
        \item $\gamma=\delta>0$,  $\mu=\lambda=2$, and $\ds \min_{1\leq i\leq m-1} \kappa_i^* >\kappa $.
    \end{enumerate}
    If $\{ (\x^k,\y^k,\z^k)\}$ is generated by \eqref{eq:adr_stepbystep_bold} given an arbitrary initial point $\x^0\in \H^{m-1}$, then the following hold:
    \begin{enumerate}[(i)]
        \item There exists $\bar{\x}\in\Fix (T_{\F,\G})$ with $J_{\gamma\F}(\bar{\x})\in \zer (\F+\G)$ such that $\x^k\toweak \bar{\x}$;
        \item $\norm{\x^k-\x^{k+1}}=o(1/\sqrt{k})$ and $\norm{\y^k-\y^{k+1}}=o(1/\sqrt{k})$ as $k\to\infty$;
        
        \item The shadow sequences $\{\y^k\}$ and $\{\z^k\}$ converge weakly to $J_{\gamma\F}(\bar{\x})\in \zer (\F+\G)$; 
        
        \item The sequences $\{ \x^k-\y^k\}$ and $\{\x^k-\z^k\}$ converge strongly to $\gamma \yosida{\gamma}\F(\bar{x}) = \bar{\x}- J_{\gamma \F}(\bar{\x})$ and $\yosida{\gamma}\F(\bar{\x}) + \zer (\F+\G) = \Fix (T_{\F,\G})$.
    \end{enumerate}
\end{theorem}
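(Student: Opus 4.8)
The plan is to follow the template of the proof of \cref{thm:aDR_comonotonecase}: pass to the Attouch--Th\'era dual \eqref{eq:D_bold}, apply \cref{lemma:improvedconvergence} there, and transfer the conclusions back to the primal via \cref{lemma:adr_primal_dual} used with $(A,B)$ replaced by $(\F,\G)$. Concretely, I introduce $\F' = -\F^{-1}\circ(-\bfId)$, $\G' = \G^{-1}$ together with the dual parameters $(\gamma',\delta',\lambda',\mu')$ defined by \eqref{eq:dual_parameters}; since $(\gamma,\delta,\lambda,\mu)$ satisfies \eqref{eq:parameters_basic_requirement_comonotone}, \cref{lemma:adr_primal_dual}(i) gives that these dual parameters satisfy \eqref{eq:dual_parameters_identity}. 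By \cref{prop:F_monotone,prop:G_monotone}, $\F$ is maximal $\underline{\sigma}$-comonotone with $\underline{\sigma}\coloneqq\min_i\sigma_i>0$ and $\G$ is maximal $(m-1)\sigma_m$-comonotone, so $J_{\gamma\F}$ is single-valued with full domain by \cref{lemma:maximal_single-valued-resolvent-comonotone}; the same will hold for $J_{\delta\G}$ once $\delta+(m-1)\sigma_m>0$ is confirmed. Then \cref{lemma:adr_primal_dual} applies, so the dual iteration \eqref{eq:adr_stepbystep_dual_bold} started at $\u^0=-\x^0/\gamma$ is tied to the primal iteration \eqref{eq:adr_stepbystep_bold} by the relations of \cref{lemma:adr_primal_dual}(ii)--(v).

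The heart of the argument is checking the parameter hypotheses (a)/(b) of \cref{lemma:improvedconvergence} for the dual data; all the other hypotheses (each $A_i$ maximal $\sigma_i$-comonotone, nonempty zero set, the sign conditions, $\sum_i 1/\sigma_i<0$, $\theta\in\Theta$, $\kappa\in(0,1)$) carry over verbatim. From \eqref{eq:parameters_basic_requirement_comonotone} and \eqref{eq:dual_parameters} one computes $\gamma'=1/\gamma$, $\delta'=1/\delta$, $\mu'=1+\delta/\gamma$, $\lambda'=\mu'/(\mu'-1)$, and with $\alpha_i,\beta_i,\nu_i$ as in \eqref{eq:coefficients_and_parameters} a short computation gives $\gamma\alpha_i=\gamma-\delta+2\sigma_i$, $\gamma\beta_i=\delta-\gamma+2\sigma_m\theta_i$, and
\[
\kappa_i^*-1=\frac{(\gamma-\delta+2\sigma_i)(\delta-\gamma+2\sigma_m\theta_i)}{2(\gamma+\delta)(\sigma_i+\sigma_m\theta_i)}=\frac{\gamma^2\,\alpha_i\beta_i}{2(\gamma+\delta)\,\nu_i}.
\]
Since $\theta\in\Theta$ forces $\nu_i=\sigma_i+\sigma_m\theta_i>0$, the numbers $\gamma\alpha_i$ and $\gamma\beta_i$ cannot both be negative (their sum is $2\nu_i>0$), hence $\alpha_i\beta_i\ge0$ is equivalent to $\alpha_i\ge0$ and $\beta_i\ge0$; therefore $\min_i\kappa_i^*\ge1$ holds if and only if $2-2\gamma'\sigma_m\theta_i\le\mu'\le2+2\gamma'\sigma_i$ for every $i$, i.e. $\mu'\in\I$ --- which is exactly case (a) of \cref{lemma:improvedconvergence}. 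In case (b), $\gamma=\delta$ and $\mu=\lambda=2$ give $\gamma'=\delta'$ and $\mu'=\lambda'=2$, the displayed identity collapses to $\kappa_i^*=1+\gamma'\frac{\sigma_i\sigma_m\theta_i}{\sigma_i+\sigma_m\theta_i}$, and $\min_i\kappa_i^*>\kappa$ becomes precisely $1-\kappa+\gamma'\frac{\sigma_i\sigma_m\theta_i}{\sigma_i+\sigma_m\theta_i}>0$ for all $i$, which is case (b) of \cref{lemma:improvedconvergence}. In either case the arguments inside the proof of \cref{lemma:improvedconvergence} (via \cref{lemma:Theta}) yield $1+\delta'\sigma_m\theta_i>0$ for each $i$; summing over $i$ and dividing by $\delta'$ gives $\delta+(m-1)\sigma_m>0$, completing the single-valuedness check for $J_{\delta\G}$.

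With the hypotheses of \cref{lemma:improvedconvergence} verified, I apply it to obtain $\bar{\u}\in\Fix(T_{\F',\G'})$ with $J_{\gamma'\F'}(\bar{\u})\in\zer(\F'+\G')$ such that $\u^k\toweak\bar{\u}$, the $o(1/\sqrt{k})$ rates for $\{\u^k\}$ and $\{\v^k\}$, and the \emph{strong} convergence of $\{\v^k\}$ and $\{\w^k\}$ to $J_{\gamma'\F'}(\bar{\u})$ with $\zer(\F'+\G')=\{J_{\gamma'\F'}(\bar{\u})\}$. Translating through \cref{lemma:adr_primal_dual}: from $\x^k=-\u^k/\gamma'$ we get $\x^k\toweak\bar{\x}\coloneqq-\bar{\u}/\gamma'\in\Fix(T_{\F,\G})$ with $J_{\gamma\F}(\bar{\x})\in\zer(\F+\G)$, giving (i); (ii) follows from the dual rates, the triangle inequality, and the formulas for $\x^k,\y^k$; for (iii), the formulas for $\y^k,\z^k$ together with the resolvent identity $J_{\gamma'\F'}(\u)=\u+\gamma'J_{\gamma\F}(-\u/\gamma')$ write $\y^k$ and $\z^k$ as affine combinations of $\u^k$ (weakly convergent) and $\v^k,\w^k$ (strongly convergent), so $\y^k,\z^k\toweak J_{\gamma\F}(\bar{\x})$; for (iv) the same relations show $\x^k-\y^k=-\v^k/\gamma'$ and $\x^k-\z^k=(\w^k-\lambda'\v^k)/\delta'$, which converge strongly, and the limits together with the identity $\yosida{\gamma}\F(\bar{\x})+\zer(\F+\G)=\Fix(T_{\F,\G})$ are obtained exactly as in the proof of \cref{thm:aDR_comonotonecase}(iv). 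The only genuinely delicate step is the parameter matching in the second paragraph --- in particular the use of $\theta\in\Theta$ to upgrade the single product inequality $\alpha_i\beta_i\ge0$ coming from $\kappa_i^*\ge1$ to the two one-sided inequalities characterizing $\mu'\in\I$; everything else is bookkeeping resting on \cref{lemma:adr_primal_dual} and \cref{lemma:improvedconvergence}.
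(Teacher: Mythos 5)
Your proposal is correct and follows essentially the same route as the paper: translate the parameters to the Attouch--Th\'era dual via \eqref{eq:dual_parameters}, verify that $\min_i\kappa_i^*\geq 1$ (resp.\ condition (b)) is equivalent to condition (a) (resp.\ (b)) of \cref{lemma:improvedconvergence} --- the paper states this equivalence as ``straightforward to verify,'' whereas you supply the identity $\kappa_i^*-1=\gamma^2\alpha_i\beta_i/\bigl(2(\gamma+\delta)\nu_i\bigr)$ and the sign argument explicitly --- and then transfer the dual conclusions back through \cref{lemma:adr_primal_dual}, exactly as in the proof of \cref{thm:aDR_comonotonecase}. Your additional checks (single-valuedness of $J_{\delta\G}$ via $\delta+(m-1)\sigma_m>0$, and the explicit primal--dual formulas for parts (ii)--(iv)) are consistent with what the paper leaves implicit, so there is no gap.
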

\begin{proof}
    Set $(\gamma', \delta', \lambda', \mu') \in \Re_{++}^4$ as in \eqref{eq:dual_parameters}. With this choice of parameters, it is straightforward to verify that the condition $\kappa_i^* \geq 1$ is equivalent to
\ifdefined\submit 
$2 - 2\gamma' \sigma_m \theta_i \leq \mu' \leq 2 + 2\gamma' \sigma_i.$
\else 
\[
2 - 2\gamma' \sigma_m \theta_i \leq \mu' \leq 2 + 2\gamma' \sigma_i.
\]
\fi
In turn, $\min_{1\leq i\leq m-1}\kappa_i^*\geq 1$ implies that  $\mu' \in \I$, where $\I\subseteq(2,+\infty)$ is given in \eqref{eq:mu_validinterval}. Hence, the condition in \cref{lemma:improvedconvergence}(a) is satisfied. Similarly, condition (b) implies that \cref{lemma:improvedconvergence}(b) holds. 
The remainder of the proof follows by applying \cref{lemma:improvedconvergence}(i)--(iii) together with \cref{lemma:adr_primal_dual}; see also the proof of \cref{thm:aDR_comonotonecase}.\smartqedmark \end{proof}

\begin{remark}
Recall that in \cref{thm:adaptiveDR}, the requirement when $\sigma_m<0$ is that $\sigma_i+\sigma_m(m-1)>0$ for all $i=1,\dots,m-1$. This readily implies that $\sigma_i>0$ for $i=1,\dots,m-1$ and that $\sum_{i=1}^{m-1}\frac{1}{\sigma_i}<0$. Hence, the condition in \cref{thm:adaptiveDR_improved} is indeed weaker than \cref{thm:adaptiveDR}. Moreover, the results in \cref{thm:adaptiveDR} for the case $\sigma_m<0$ can also be derived from the \cref{thm:adaptiveDR_improved} by setting $\theta_i\equiv m-1$ for all $i$.
\end{remark}

For clarity, we summarize in one theorem the results in \cref{thm:adaptiveDR,thm:adaptiveDR_improved}.

\begin{theorem}
\label{thm:adaptiveDR_combined}
   Suppose that $A_i$ is maximal $\sigma_i$-comonotone for $i=1,\dots,m$, and $\zer (A_1+\cdots + A_m) \neq \emptyset$. Denote $\underline{\sigma} \coloneqq  \min_{1\leq i\leq m-1}\sigma_i$. Let $(\gamma,\delta,\lambda,\mu)\in \Re^4_{++}$ be parameters that satisfy \eqref{eq:parameters_basic_requirement_comonotone} and suppose that one of the following conditions hold:
   \begin{enumerate}[(C1)]
       \item $\underline{\sigma} =- (m-1)\sigma_m\geq 0$  and $\delta = \gamma + 2\underline{\sigma}$; 
       \item $\underline{\sigma} >- (m-1)\sigma_m\geq  0$ and $(\gamma + \delta)^2 < 4\left(\gamma + \ds \underline{\sigma} \right)(\delta+(m-1)\sigma_m)$; 
       \item $\sigma_i>0$ for $i=1,\dots,m-1$, $\sigma_m<0$ and $\sum_{i=1}^m \frac{1}{\sigma_i}<0$. In addition, $\kappa \in (0,1)$ and either 
       \begin{enumerate}[leftmargin=2.5em]
            \item $\min_{1\leq i\leq m-1} \kappa_i^*\geq 1$; or
            \item $\gamma=\delta>0$, $\lambda=\mu=2$, and $\min_{1\leq i\leq m-1} \kappa_i^*>\kappa $,  
        \end{enumerate}
        where $\kappa_i^*$ is given by \eqref{eq:kappa_i} for some $\theta=(\theta_1,\dots,\theta_{m-1})\in \Theta$, where $\Theta$ is defined in \eqref{eq:Theta}.
   \end{enumerate}
   Assume $\kappa^*>0$, where

           \begin{equation}
            \kappa^* \coloneqq \begin{cases}
                1 & \text{if}~(C1)~\text{or}~(C3)~\text{holds}, \\
                \frac{4(\gamma+\underline{\sigma})(\delta+(m-1)\sigma_m) -(\gamma+\delta)^2}{2(\gamma+\delta)(\underline{\sigma}+(m-1)\sigma_m)} & \text{if}~(C2)~\text{holds},
            \end{cases}
            \label{eq:kappastar_3cases}
        \end{equation}
    and let $\kappa \in (0,\kappa^*)$.
    If $\{ (\x^k,\y^k,\z^k)\}$ is generated by \eqref{eq:adr_stepbystep_bold} given an arbitrary initial point $\x^0\in \H^{m-1}$, then the following hold:
 \begin{enumerate}[(i)]
     \item There exists $\bar{\x}\in \Fix (T_{\F,\G})$ with $J_{\gamma \F}(\bar{\x})\in \zer (\F+\G)$ such that $\x^k\toweak \bar{\x}$; 
     \item $\norm{\x^k-\x^{k+1}}=o(1/\sqrt{k})$ and $\norm{\y^k-\y^{k+1}}=o(1/\sqrt{k})$ as $k\to\infty$;
     \item The shadow sequences $\{\y^k\}$ and $\{\z^k\}$ converge weakly to $J_{\gamma \F}(\bar{\x})\in \zer (\F+\G)$.
     \item If (C2) holds with $\sigma_m=0$ or if (C3) holds, then  $\{\x^k-\y^k \} $ and $\{\x^k-\z^k\}$ converge strongly to $\gamma \yosida{\gamma }\F(\bar{\x}) = \bar{\x}-J_{\gamma \F}(\bar{\x}) $, and $\yosida{\gamma}\F(\bar{\x}) + \zer (\F+\G) =  \Fix (T_{\F,\G})$.
 \end{enumerate}

\end{theorem}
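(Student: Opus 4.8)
The plan is to read this statement as a consolidation of \cref{thm:adaptiveDR} and \cref{thm:adaptiveDR_improved}: conditions (C1)--(C2) place us in the regime of the former (with $(\alpha,\beta)$ taken to be $(\underline{\sigma},(m-1)\sigma_m)$), while (C3) is verbatim the hypothesis of the latter. Consequently no new analysis is needed, and the work reduces to checking that in each case the structural hypotheses, the parameter conditions, and the value of $\kappa^*$ line up with those of the theorem being invoked. At the outset I would record that $\zer(\F+\G)\neq\emptyset$, which is immediate from \eqref{eq:zeros_equivalence} together with the standing assumption $\zer(A_1+\cdots+A_m)\neq\emptyset$.

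Suppose first that (C1) or (C2) holds and set $\alpha\coloneqq\underline{\sigma}$, $\beta\coloneqq(m-1)\sigma_m$. Since $-(m-1)\sigma_m\geq 0$ in both cases, $\sigma_m\leq 0$; and since $\sigma_i\geq\underline{\sigma}$ and $\underline{\sigma}+(m-1)\sigma_m=\alpha+\beta\geq 0$, we get $\sigma_i+(m-1)\sigma_m\geq 0$ for $i=1,\dots,m-1$, so the structural hypotheses of \cref{thm:adaptiveDR} hold. I would then observe that, for this $(\alpha,\beta)$, the requirement \eqref{eq:parameters_additional_reqs_comonotone} and the threshold \eqref{eq:kappastar_comonotone} reduce precisely to (C1)--(C2) and to \eqref{eq:kappastar_3cases}: (C1) is exactly the subcase $\alpha+\beta=0$, where \eqref{eq:parameters_additional_reqs_comonotone} becomes $\delta=\gamma+2\alpha$ and $\kappa^*=1$; and (C2) is exactly the subcase $\alpha+\beta>0$, where \eqref{eq:parameters_additional_reqs_comonotone} becomes $(\gamma+\delta)^2<4(\gamma+\alpha)(\delta+\beta)$ and $\kappa^*$ is the displayed ratio. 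Hence \cref{thm:adaptiveDR} applies and its parts (i)--(iii) give (i)--(iii) here. For (iv): if (C2) holds with $\sigma_m=0$ then $\beta=0$, so $\alpha+\beta=\underline{\sigma}>0$ and $\sigma_i+(m-1)\sigma_m=\sigma_i\geq\underline{\sigma}>0$ for every $i\leq m-1$, which puts the strong-convergence clause of \cref{thm:adaptiveDR}(iv) in force and yields the limits asserted in (iv).

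Suppose finally that (C3) holds. Its hypotheses---$\sigma_i>0$ for $i\leq m-1$, $\sigma_m<0$, $\sum_{i=1}^m\tfrac{1}{\sigma_i}<0$, $\kappa\in(0,1)$, and one of the two conditions on $\min_{1\leq i\leq m-1}\kappa_i^{*}$---coincide with those of \cref{thm:adaptiveDR_improved} (and the value $\kappa^*=1$ is consistent with $\kappa\in(0,1)$), so all of (i)--(iv) follow at once from that theorem. The entire argument is therefore bookkeeping rather than analysis; the one point that repays care is verifying, case by case, that \eqref{eq:parameters_additional_reqs_comonotone} and \eqref{eq:kappastar_comonotone} collapse to the stated forms (C1)/(C2) and \eqref{eq:kappastar_3cases}, and that the auxiliary conditions on $\kappa$ and $\kappa^*$ in the strong-convergence parts of \cref{thm:adaptiveDR} and \cref{thm:adaptiveDR_improved} are satisfied in the settings covered by (iv). No genuine analytic obstacle is expected beyond these routine checks.
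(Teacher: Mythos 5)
Your proposal is correct and coincides with the paper's own treatment: the paper gives no separate proof of this statement, introducing it explicitly as a summary of \cref{thm:adaptiveDR,thm:adaptiveDR_improved}, which is exactly your argument — cases (C1)--(C2) are \cref{thm:adaptiveDR} with $(\alpha,\beta)=(\underline{\sigma},(m-1)\sigma_m)$, case (C3) is \cref{thm:adaptiveDR_improved} verbatim, and the rest is the bookkeeping you describe (including noting $\zer(\F+\G)\neq\emptyset$ via \eqref{eq:zeros_equivalence}).
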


\begin{remark}[Switching $\F$ and $\G$]\label{remark:switched_ADR}
The aDR algorithm 
\begin{equation}
    \x^{k+1} \in T_{{\G},{\F}}(\x^k) \coloneqq ( (1-\kappa) \bfId + \kappa J_{\gamma {\F}}^{\lambda} J_{\delta {\G}}^{\mu})(\x^k), \quad \forall k\in \mathbb{N},
    \label{eq:aDR_algorithm_bold_switched}
\end{equation}
where $\F$ and $\G$ are switched, can also be expressed as
\begin{subequations} \label{eq:adr_stepbystep_bold_switched}
    \begin{align}
        \y^k & \in  J_{\delta \G}(\x^k)\\
        \z^k & \in  J_{\gamma   \F}((1-\mu)\x^k + \mu \y^k) \\ 
        \x^{k+1} & = \x^k + \kappa \lambda (\z^k - \y^k).
    \end{align}
\end{subequations}
In general, the order of the operators may affect the convergence behavior of the algorithm. However, we note that for \eqref{eq:aDR_algorithm_bold_switched}, we still obtain the convergence results established in \cref{thm:adaptiveDR_combined} under the same hypotheses on the moduli $\sigma_1,\dots,\sigma_{m-1}$. In particular, the statement of \cref{thm:adaptiveDR_combined} holds for the switched algorithm \eqref{eq:adr_stepbystep_bold_switched} by simply replacing $T_{\F,\G}$, $J_{\gamma \F}$, and $\gamma \yosida{\gamma}\F$ with $T_{\G,\F}$, $J_{\delta \G}$ and $\delta \yosida{\delta}\G$, respectively. While the proof is not entirely straightforward, it essentially follows similar arguments to those in the preceding section and is therefore omitted for brevity. 
\end{remark}

\section{Multiblock ADMM}\label{sec:admm}
We now revisit the multiblock optimization problem 
\begin{equation}
    \min_{u_i \in \mathcal{H}_i} \, f_1(u_1) + \cdots + f_m(u_m) \quad \text{s.t.} \quad \sum_{i=1}^m L_i u_i = b,
    \label{eq:multiblock_again}
\end{equation}
where $f_i : \mathcal{H}_i \to (-\infty, +\infty]$ is a proper function,  $L_i : \mathcal{H}_i \to \mathcal{H}$ is a bounded linear operator, and $b\in \H$. To motivate the operator inclusion reformulation of \eqref{eq:multiblock_again}, we recall the following definition; see also \cite[page 16]{MalitskyTam2023}.
\begin{definition}
\label{defn:kuhntucker}
    We say that  $(u_1,\dots,u_m,y)\in \H_1\times \cdots \times \H_m \times \H$ is a \emph{Karush-Kuhn-Tucker (KKT) point for \eqref{eq:multiblock_again}} if  $ 0\in \widehat{\partial} f_i (u_i) + L_i^* y$ for all $i=1,\dots,m$ and $\sum_{i=1}^mL_iu_i = b$.
\end{definition}
Hence, if $(u_1,\dots,u_m,y)\in \H_1\times \cdots \times \H_m \times \H$ is a KKT point for \eqref{eq:multiblock_again}, then $u_i\in (\widehat{\partial} f_i)^{-1}(-L_i^*y)$ for all $i$. Since $\sum_{i=1}^mL_iu_i = b$, it is immediate to see that $x$ is a solution of the multioperator inclusion problem \eqref{eq:inclusion} with 
$A_i:\H\toset \H$  given by 
\begin{equation}
    A_i \coloneqq 
    \begin{cases}
    -L_i \circ (\widehat{\partial} f_i) ^{-1} \circ \left(-L^*_i\right) & \text{if}~i =1,\dots,m-1, \\   
    -L_m \circ (\widehat{\partial} f_m) ^{-1} \circ \left(-L_m^*\right) + b & \text{if}~i = m,
    \end{cases}
    \label{eq:Ai_for_multiblock}
\end{equation}
which is similar to that of in \cite{Bartz2022AdaptiveADMM} when $m=2$. Conversely, a solution of \eqref{eq:inclusion} with $A_i$'s given by \eqref{eq:Ai_for_multiblock} can be shown to correspond to a KKT point for \eqref{eq:multiblock_again}. 
\begin{proposition}
\label{prop:kuhntucker_equivalent_multiblock}
    The inclusion problem \eqref{eq:inclusion} with $A_i$'s given by \eqref{eq:Ai_for_multiblock} has a solution if and only if a KKT point for \eqref{eq:multiblock_again} exists.
\end{proposition}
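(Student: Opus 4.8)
The plan is to prove both directions by tracking the correspondence between a KKT point and a zero of $\sum_{i=1}^m A_i$. The forward direction is essentially done in the paragraph preceding the statement: if $(u_1,\dots,u_m,y)$ is a KKT point, then $u_i \in (\widehat{\partial} f_i)^{-1}(-L_i^* y)$ for each $i$, hence $-L_i u_i \in A_i(y)$ for $i=1,\dots,m-1$ and $-L_m u_m + b \in A_m(y)$; summing and using the feasibility constraint $\sum_{i=1}^m L_i u_i = b$ gives $0 \in \sum_{i=1}^m A_i(y)$, so $y$ solves \eqref{eq:inclusion}. I would restate this cleanly as the ``only if'' direction.

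For the converse, suppose $x \in \H$ satisfies $0 \in A_1(x) + \dots + A_m(x)$ with the $A_i$ from \eqref{eq:Ai_for_multiblock}. Unwinding the definition of each $A_i$: for $i=1,\dots,m-1$ there exists $u_i \in (\widehat{\partial} f_i)^{-1}(-L_i^* x)$ with $-L_i u_i \in A_i(x)$, and for $i=m$ there exists $u_m \in (\widehat{\partial} f_m)^{-1}(-L_m^* x)$ with $-L_m u_m + b \in A_m(x)$. The inclusion $0 \in \sum_i A_i(x)$ means we may choose these selections so that $\sum_{i=1}^{m-1}(-L_i u_i) + (-L_m u_m + b) = 0$, i.e. $\sum_{i=1}^m L_i u_i = b$. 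From $u_i \in (\widehat{\partial} f_i)^{-1}(-L_i^* x)$ we get $-L_i^* x \in \widehat{\partial} f_i(u_i)$, i.e. $0 \in \widehat{\partial} f_i(u_i) + L_i^* x$. Setting $y \coloneqq x$, the tuple $(u_1,\dots,u_m,y)$ satisfies both requirements of \cref{defn:kuhntucker}, so it is a KKT point for \eqref{eq:multiblock_again}.

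The only genuinely delicate point is the bookkeeping in the converse direction: the hypothesis $0 \in A_1(x)+\dots+A_m(x)$ asserts the existence of elements $\xi_i \in A_i(x)$ with $\sum_i \xi_i = 0$, and one must observe that each such $\xi_i$ is \emph{exactly} of the form $-L_i u_i$ (resp. $-L_m u_m + b$) for an appropriate $u_i$ lying in the relevant inverse subdifferential — this is immediate from the composition structure in \eqref{eq:Ai_for_multiblock}, but it is worth spelling out that $\ran(-L_i \circ (\widehat{\partial}f_i)^{-1} \circ (-L_i^*)) $ consists precisely of such points. Everything else is a direct translation using the definition of the inverse operator and of $\widehat{\partial} f_i$. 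No convexity or maximality is needed here; the statement is purely an algebraic equivalence of solvability, so I would keep the proof short and avoid invoking any monotonicity machinery.
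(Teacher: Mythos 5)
Your proof is correct and follows essentially the same route the paper takes: the forward direction is exactly the definitional argument sketched in the paragraph preceding the proposition, and the converse is the straightforward unwinding of \eqref{eq:Ai_for_multiblock} (each summand in $0\in\sum_i A_i(x)$ has the form $-L_iu_i$, resp.\ $-L_mu_m+b$, with $u_i\in(\widehat{\partial}f_i)^{-1}(-L_i^*x)$), yielding a KKT point with $y\coloneqq x$. Your remark that no convexity or maximality is needed is accurate; the equivalence is purely algebraic.
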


\subsection{Derivation of a multiblock ADMM}\label{subsec:derivation_multiblock}
To apply the Douglas--Rachford algorithm for \eqref{eq:inclusion} with $A_i$'s given in \eqref{eq:Ai_for_multiblock}, we first provide a formula for obtaining elements of the resolvent of each $A_i$. 

\begin{proposition}
\label{prop:resolvents_Ai_multiblock}
    Let $L$ be a bounded linear operator from $\H'$ to $\H$, $f$ be a proper closed function, and $A \coloneqq (-L) \circ (\widehat{\partial} f)^{-1} \circ \left(-L^*\right)$. For any $x\in \H$ and $\gamma>0$,  
    \[ x+\gamma L\zer \left( \widehat{\partial} f (\cdot) + \gamma L^* \circ \left(L (\cdot) + \frac{x}{\gamma}\right)\right) \subseteq J_{\gamma A}(x).\]
    In particular, suppose $u$ satisfies 
    \begin{equation}
         u\in \SfA{f}{L}{x}  \coloneqq \argmin_{w\in \H'} f(w) + \frac{\gamma}{2}\norm{Lw+\frac{x}{\gamma}}^2.
         \label{eq:prox-L}
    \end{equation}
    Then $ x+  \gamma Lu \in J_{ \gamma A}(x)$. Consequently, if $B\coloneqq A + b$, where $b\in \H$, we have $J_{ \gamma B} (x)=J_{ \gamma A}(x-\gamma b) \ni x-\gamma b + \gamma Lu$, where $ u\in \SfA{f}{L}{x-\gamma b}$.
\end{proposition}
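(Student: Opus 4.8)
The plan is to work directly from the definition $J_{\gamma A} = (\Id + \gamma A)^{-1}$, so that $p \in J_{\gamma A}(x)$ iff $\frac{1}{\gamma}(x-p) \in A(p)$. Since $A(p) = -L\bigl((\widehat{\partial} f)^{-1}(-L^*p)\bigr) = \{-Lw : -L^*p \in \widehat{\partial} f(w)\}$, this membership is equivalent to the existence of $w \in \H'$ with $-L^*p \in \widehat{\partial} f(w)$ and $\frac{1}{\gamma}(x-p) = -Lw$; the latter rearranges to $p = x + \gamma Lw$. First I would substitute this expression for $p$ into $-L^*p \in \widehat{\partial} f(w)$ and use $L^*(x + \gamma Lw) = \gamma L^*\bigl(Lw + \frac{x}{\gamma}\bigr)$ to rewrite the condition as $0 \in \widehat{\partial} f(w) + \gamma L^*\bigl(Lw + \frac{x}{\gamma}\bigr)$, i.e.\ $w \in \zer\bigl(\widehat{\partial} f(\cdot) + \gamma L^* \circ (L(\cdot) + \frac{x}{\gamma})\bigr)$. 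This chain of equivalences shows that $x + \gamma Lw \in J_{\gamma A}(x)$ whenever $w$ lies in that zero set, which is precisely the stated inclusion (in fact an equality).

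Next I would verify the ``in particular'' statement, for which it suffices to show that every $u \in \SfA{f}{L}{x}$ belongs to the zero set above. By Fermat's rule applied to the minimization problem in \eqref{eq:prox-L}, $0 \in \widehat{\partial}(f + h)(u)$ with $h(w) \coloneqq \frac{\gamma}{2}\norm{Lw + \frac{x}{\gamma}}^2$. Since $h$ is a smooth convex quadratic with $\nabla h(w) = \gamma L^*\bigl(Lw + \frac{x}{\gamma}\bigr)$, the exact sum rule for the regular subdifferential, which holds whenever one summand is continuously differentiable, gives $\widehat{\partial}(f+h)(u) = \widehat{\partial} f(u) + \nabla h(u)$; hence $0 \in \widehat{\partial} f(u) + \gamma L^*\bigl(Lu + \frac{x}{\gamma}\bigr)$ and therefore $x + \gamma Lu \in J_{\gamma A}(x)$ by the first part.

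Finally, for $B = A + b$ with $b \in \H$, the identity $J_{\gamma B}(x) = J_{\gamma A}(x - \gamma b)$ is immediate: $p \in J_{\gamma B}(x) \iff x \in p + \gamma A(p) + \gamma b \iff x - \gamma b \in p + \gamma A(p) \iff p \in J_{\gamma A}(x - \gamma b)$; applying the ``in particular'' part with $x - \gamma b$ in place of $x$ then yields $x - \gamma b + \gamma Lu \in J_{\gamma B}(x)$ for any $u \in \SfA{f}{L}{x - \gamma b}$. I do not expect a genuine obstacle in any of this; the only point that requires care is the use of the sum rule for the regular subdifferential in this (possibly nonconvex) setting, which is exact precisely because $h$ is smooth and, if preferred to a citation, can be checked in a couple of lines straight from the $\liminf$-definition of $\widehat{\partial}$. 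Everything else is routine bookkeeping with $L^*$, the scalar $\gamma$, and the direction of the inverse $(\widehat{\partial} f)^{-1}$.
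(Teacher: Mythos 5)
Your proposal is correct and follows essentially the same route as the paper: unpack $J_{\gamma A}=(\Id+\gamma A)^{-1}$ via the definition of $A$, rewrite the membership as $0\in\widehat{\partial}f(w)+\gamma L^*\bigl(Lw+\frac{x}{\gamma}\bigr)$, and obtain the ``in particular'' part from Fermat's rule together with the exact sum rule for the regular subdifferential with a smooth summand (the paper cites Mordukhovich for this calculus step). The only cosmetic differences are that you phrase the first part as a chain of equivalences (yielding equality rather than just the stated inclusion) and spell out the standard shift identity $J_{\gamma B}(x)=J_{\gamma A}(x-\gamma b)$, which the paper treats as immediate.
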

\begin{proof}
    If $u\in \zer \left( \widehat{\partial} f (\cdot) + \gamma L^* \circ \left( L (\cdot) + \frac{x}{\gamma}\right)\right) $, then $0\in \widehat{\partial} f(u) +  L^* z$ where $z = x+ \gamma Lu $ by \cite[Proposition 1.114]{Mordukhovich2006}. Hence, $u\in (\widehat{\partial} f)^{-1} \left( -L^*z\right) $ and so $z \in x +  \gamma L \left( (\widehat{\partial} f)^{-1} \left( -L^*z\right) \right) =x - \gamma A(z)$. Thus, $z\in J_{\gamma A}(x)$, proving the desired inclusion. The last claim is immediate by noting that from the optimality condition of \eqref{eq:prox-L},  a solution $u$ of \eqref{eq:prox-L} is a zero of $ \widehat{\partial} f (\cdot) + \gamma L^* \circ \left( L (\cdot) + \frac{x}{\gamma}\right)$. 
\smartqedmark \end{proof}


Let $\F$ and $\G$ be given by \eqref{eq:F} and \eqref{eq:G}, respectively, where each $A_i$ is given by \eqref{eq:Ai_for_multiblock}. We now apply the aDR algorithm \eqref{eq:aDR_algorithm_bold_switched} with renumbered iteration count: Given $\x^0$, we let $\y^0 \in \JLambda{\G}(\x^0)$, and  for $k=0,1,2,\dots$, we rewrite \eqref{eq:adr_stepbystep_bold_switched} as 
    \begin{subequations}
    \label{eq:dr_stepbystep2_renumbered}
        \begin{align}
            \z^{k+1} & \in J_{\gamma  \F}((1-\mu)\x^k + \mu \y^k ) \\
            \y^{k+1} & \in J_{\delta \G}(\x^k + \kappa \lambda (\z^{k+1} - \y^k)) \\ 
            \x^{k+1} & = \x^k + \kappa \lambda (\z^{k+1} - \y^k). \label{eq:dr_stepbystep2_renumbered_x}
        \end{align}
    \end{subequations}
Denoting 
    \begin{equation}
    \s^k \coloneqq \y^k - \x^k,
    \label{eq:changevariable}
    \end{equation}
we obtain
   \begin{subequations}
   \label{eq:admm_derivation}
        \begin{align}
            \z^{k+1} & \in J_{\gamma \F}(\y^k+ (\mu-1)\s^k) \\
            \y^{k+1} & \in J_{\delta \G}((1-\kappa \lambda)\y^k + \kappa\lambda \z^{k+1} -\s^k) \\ 
            \s^{k+1} & = \s^k - (1-\kappa\lambda)\y^k   +\y^{k+1} - \kappa \lambda \z^{k+1}.
        \end{align}
    \end{subequations}
Suppose that $S_{f_i,L_i}(x;\gamma)\neq 
\emptyset$ for any $x$  and $\gamma>0$. Using formulas  \eqref{eq:Fresolvent}-\eqref{eq:Gresolvent} and \cref{prop:resolvents_Ai_multiblock}, we derive \cref{alg:admm_general} as an instance of the aDR algorithm \eqref{eq:admm_derivation}.

\begin{algorithm}[tb]
    Input initial point $(y^0,s^0_1,\dots,s^0_{m-1})\in \H^{m}$, parameters $(\gamma,\delta,\lambda,\mu)\in \Re_{++}^2 \times (1,+\infty)^2$ satisfying \eqref{eq:parameters_basic_requirement}, and $\kappa \in (0,1)$. 
    
    For $k=0,1,2,\dots ,$
      \begin{subequations}
    \label{eq:admm_derivation_stepbystep}
    \begin{align}
            u_i^{k+1} & \ds \in  \argmin_{w_i\in \H_i} f_i(w_i) + \frac{\gamma}{2}\norm{L_iw_i + \frac{y^k + (\mu-1)s_i^k}{\gamma}}^2  \quad (i=1,\dots,m-1) \label{eq:admm_u_update}\\
            z_i^{k+1} & = y^k + (\mu-1)s_i^k + \gamma L_i u_i^{k+1}   \quad (i=1,\dots,m-1) \label{eq:admm_z_update} \\ 
            v_i^{k+1} & = (1-\kappa\lambda)y^k + \kappa\lambda
            z_i^{k+1} - s_i^k \quad (i=1,\dots,m-1) \label{eq:admm_v_update} \\
            u_m^{k+1} & \ds \in  \argmin_{w_m\in \H_m} f_m(w_m) + \frac{\delta}{2(m-1)} \norm{L_m w_m + \frac{1}{\delta}\sum_{j=1}^{m-1} v_i^{k+1} -b}^2 \label{eq:admm_um_update}\\
            y^{k+1} & \ds = \frac{1}{m-1}\sum_{j=1}^{m-1}  v_j^{k+1}- \frac{\delta}{m-1} b + \frac{\delta}{m-1} L_m u_m^{k+1} \label{eq:admm_y_update}\\ 
		  s_i^{k+1} & = s_i^k - (1-\kappa \lambda)y^k + y^{k+1} - \kappa\lambda z_i^{k+1} \quad (i=1,\dots,m-1) . \label{eq:admm_s_update}
    \end{align}
    \end{subequations}
	\caption{Multiblock Splitting Algorithm}
	\label{alg:admm_general}
\end{algorithm}

A special instance of \cref{alg:admm_general} can be obtained by setting $\kappa\in (0,1)$ as $ \kappa \coloneqq \frac{1}{\lambda(\mu-1)} = \frac{\lambda-1}{\lambda}. $ To see this, 
\ifdefined\submit 
note from \eqref{eq:admm_z_update} and \eqref{eq:admm_v_update} that 
 \[\sum_{j=1}^{m-1}v_j^{k+1} = (m-1)y^k  + \kappa \lambda \gamma \sum_{j=1}^{m-1} L_ju_j^{k+1} + (\kappa \lambda (\mu-1) - 1)\sum_{j=1}^{m-1}s_j^k.\] 
\else 
note from \eqref{eq:admm_z_update} that 
    \[\sum_{j=1}^{m-1}z_j^{k+1} = (m-1)y^k + (\mu-1) \sum_{j=1}^{m-1}s_j^k + \gamma \sum_{j=1}^{m-1} L_ju_j^{k+1},\]
and therefore \eqref{eq:admm_v_update} gives   
    \[\sum_{j=1}^{m-1}v_j^{k+1} = (m-1)y^k  + \kappa \lambda \gamma \sum_{j=1}^{m-1} L_ju_j^{k+1} + (\kappa \lambda (\mu-1) - 1)\sum_{j=1}^{m-1}s_j^k.\] 
\fi 
Substituting $\kappa$ and noting \eqref{eq:parameters_basic_requirement_comonotone}, we see that $\kappa \lambda \gamma = \delta$ and that the last term above vanishes. Further calculations lead to the following iterations:
 \begin{align}
            u_i^{k+1} & \in  \ds \argmin_{w_i\in \H_i} f_i(w_i) + \frac{\gamma}{2}\norm{L_iw_i + \frac{y^k + (\mu-1)s_i^k}{\gamma}}^2 \quad (i=1,\dots, m-1) \label{eq:admm_ui_specialcase} \\
            u_m^{k+1} & \ds \in  \argmin_{w_m\in \H_m} f_m(w_m) + \frac{\delta}{2(m-1)} \norm{L_m w_m + \frac{m-1}{\delta}y^k + \sum_{j=1}^{m-1}L_ju_j^{k+1} -b }^2 \label{eq:admm_um_specialcase}\\
           y^{k+1} & \ds = y^k + \frac{\delta}{m-1}\left( \sum_{j=1}^{m} L_ju_j^{k+1} - b \right) \label{eq:admm_y_specialcase} \\ 
		  s_i^{k+1} & \ds = \frac{\delta}{m-1} \left( \sum_{j=1}^{m} L_ju_j^{k+1} - b \right) - \delta L_iu_i^{k+1}  \quad (i=1,\dots, m-1)  \label{eq:admm_si_specialcase}
\end{align}
Plugging in \eqref{eq:admm_si_specialcase} to \eqref{eq:admm_ui_specialcase} and noting \eqref{eq:parameters_basic_requirement_comonotone}, we can simplify the $u_i$-update as 
    \begin{equation}
        u_i^{k+1} \in  \ds \argmin_{w_i\in \H_i} f_i(w_i) + \frac{\gamma}{2}\norm{L_iw_i - L_iu_i^k + \frac{v^k}{m-1}}^2 , \label{eq:admm_ui_specialcase2}
    \end{equation}
for $i=1,\dots,m-1$, where $v^k \coloneqq \sum_{j=1}^{m} L_ju_j^{k} - b  + \frac{y^k(m-1)}{\gamma}$. Meanwhile, note that 
\begin{align*}
   & \norm{L_iw_i -L_iu_i^k + \frac{v^k}{m-1} }^2 \\
   & = \frac{1}{(m-1)^2} \norm{(m-1)(L_iw_i-L_iu_i^k) + v^k}^2 \\
   & = \frac{1}{m-1} \left( (m-1)\norm{L_iw_i-L_iu_i^k}^2  + 2\inner{L_iw_i-L_iu_i^k}{v^k}+\frac{\norm{v^k}^2}{m-1}\right)\\
   & = \frac{1}{m-1} \norm{L_iw_i-L_iu_i^k + v^k}^2 + \frac{m-2}{m-1}\norm{L_iw_i-L_iu_i^k} + \frac{2-m}{m-1}\norm{v^k}^2.
\end{align*}


With this, we can simplify the algorithm as shown in \cref{alg:admm_specialcase}\footnote{ Although a similar version of this algorithm appeared in~\cite{HeYuan2015}, it was introduced as a conceptual heuristic without a formal derivation, in contrast to the present work.}. Note that \cref{alg:admm_specialcase} reduces to the standard ADMM algorithm when $m = 2$; see, for example, \cite[Section 15.2]{Beck17}. For $m > 2$, \cref{alg:admm_specialcase} differs from the more commonly used ``natural'' extensions of the two-block ADMM, such as those proposed in \cite{Chen2016ADMM,wang2019global}; see also \eqref{eq:GS-ADMM}. In particular, in \cref{alg:admm_specialcase}, the updates for $u_i^{k+1}$ for $i = 1, \dots, m{-}1$ can be computed in parallel.

\begin{algorithm}
    Input initial point $(u^0_1,\dots,u^0_{m})\in \H^1 \times \cdots \times \H^m$, $y^0\in \H$, and parameters $(\gamma,\delta,\lambda,\mu)\in \Re_{++}^2 \times (1,+\infty)^2$ satisfying \eqref{eq:parameters_basic_requirement}. Set $\gamma ' \coloneqq \frac{\gamma}{m-1}$ and $\delta' \coloneqq \frac{\delta}{m-1}$. \\
    For $k=0,1,2,\dots ,$
    \begin{align*}
             u_i^{k+1} & \in \ds \argmin_{w_i\in \H_i} f_i(w_i) + \frac{\gamma'}{2}\norm{L_iw_i  +\sum_{\substack{j = 1 \\ j \neq i}}^{m} L_ju_j^{k} - b  + \frac{y^k}{\gamma'} }^2 +\frac{\gamma' (m-2)}{2}\norm{L_i(w_i-u_i^k)}^2 \\& \quad (i=1,\dots,m-1) \\ 
            u_m^{k+1} & \ds \in  \argmin_{w_m\in \H_m} f_m(w_m) + \frac{\delta'}{2} \norm{L_m w_m + \sum_{j=1}^{m-1}L_ju_j^{k+1} -b + \frac{y^k}{\delta'}  }^2 \\
           y^{k+1} & \ds = y^k + \delta' \left( \sum_{j=1}^{m} L_ju_j^{k+1} - b \right)
    \end{align*}
	\caption{Multiblock Splitting Algorithm with $\kappa = \frac{\lambda-1}{\lambda}$}
	\label{alg:admm_specialcase}
\end{algorithm}

The following describes the fixed points of $T_{\G,\F}$, which is important to interpret the convergence result for the Douglas--Rachford algorithm \eqref{eq:dr_stepbystep2_renumbered}.
\begin{proposition}
\label{prop:fixedpoints_admm}
  Let $\F$ and $\G$ be given by \eqref{eq:F} and \eqref{eq:G}, respectively, where each $A_i$ is given by \eqref{eq:Ai_for_multiblock}, and let $(\gamma,\delta,\lambda,\mu)\in\Re^4_{++}$ be parameters that satisfy \eqref{eq:parameters_basic_requirement_comonotone}. Suppose that $J_{\gamma \F}$ and $J_{\delta \G}$ are single-valued, and  $S_{f_m,L_m}(\cdot ;\frac{\delta}{m-1})$ and $S_{f_i,L_i}(\cdot ;\gamma)$ are non-empty valued for $i=1,\dots,m-1$. If $\bar{\x}\in \Fix(T_{\G,\F})$, where $T_{\G,\F}$ is given by \eqref{eq:aDR_algorithm_bold_switched}, and $\bar{\y} \coloneqq J_{\delta \G}(\bar{\x})$, then the following hold:
   \begin{enumerate}[(i)]
       \item $\bar{\y} =(\bar{y},\dots,\bar{y})$ where 
     \begin{align}
        \bar{y} & \coloneqq \frac{1}{m-1}\sum_{i=1}^{m-1}\bar{x}_i - \frac{\delta}{m-1} b + \frac{\delta}{m-1}L_m\bar{u}_m \label{eq:y_admm}\\ 
         \bar{u}_m & \coloneqq  \argmin _{w_m\in \H_m}f_m(w_m) + \frac{\delta}{m-1} \norm{L_mw_m + \frac{1}{\delta}\sum_{i=1}^{m-1}\bar{x}_i-b}^2.
         \label{eq:u_m}
     \end{align}
     \item For each $i=1,\dots,m-1$, denote
        \begin{equation}
            \bar{u_i} \coloneqq  \argmin_{w_i\in \H_i} f_i(w_i) + \frac{\delta}{2}\norm{L_iw_i + \frac{(1-\mu)\bar{x}_i + \mu \bar{y}}{\gamma}}^2.
            \label{eq:u_i}
        \end{equation}
        Then $\bar{y}$ given by \eqref{eq:y_admm} is also given by $\bar{y}= \bar{x}_i - \delta L_i\bar{u}_i$ for any $i=1,\dots,m-1$. 
    \item $(\bar{u}_1,\dots,\bar{u}_m,\bar{y})$ is a KKT point of \eqref{eq:multiblock_again}.
   \end{enumerate}
\end{proposition}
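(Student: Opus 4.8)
The plan is to turn the fixed-point equation $\bar{\x}=T_{\G,\F}(\bar{\x})$ into two resolvent identities and then decode these via the product-space resolvent formulas \eqref{eq:Fresolvent}--\eqref{eq:Gresolvent} together with the optimization description of the $J_{\gamma A_i}$ in \cref{prop:resolvents_Ai_multiblock}. First I would note that, since $T_{\G,\F}=(1-\kappa)\bfId+\kappa J_{\gamma\F}^{\lambda}J_{\delta\G}^{\mu}$, being a fixed point is equivalent to $\bar{\x}=J_{\gamma\F}^{\lambda}J_{\delta\G}^{\mu}(\bar{\x})$. Expanding $J_{\delta\G}^{\mu}=\mu J_{\delta\G}+(1-\mu)\bfId$ and $J_{\gamma\F}^{\lambda}=\lambda J_{\gamma\F}+(1-\lambda)\bfId$ (both single-valued by hypothesis) and using the parameter relations $(\lambda-1)(\mu-1)=1$ and $\delta=\gamma(\lambda-1)$ of \eqref{eq:parameters_basic_requirement_comonotone}, a short computation---in which the coefficient of $\bar{\x}$ collapses to $1-(\lambda-1)(\mu-1)=0$ and the coefficient of $\bar{\y}$, namely $(\lambda-1)\mu/\lambda$, reduces to $1$---yields $J_{\gamma\F}\big((1-\mu)\bar{\x}+\mu\bar{\y}\big)=\bar{\y}$, where $\bar{\y}=J_{\delta\G}(\bar{\x})$. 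In other words, $\bar{\x}$ being a fixed point is precisely the statement that the shadow iterates of \eqref{eq:adr_stepbystep_bold_switched} are stationary with $\bar{\z}=\bar{\y}$; these two identities drive the rest of the argument.

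For part (i) I would insert $\bar{\y}=J_{\delta\G}(\bar{\x})$ into \eqref{eq:G_resolvent}: single-valuedness forces $\bar{\y}=(\bar{y},\dots,\bar{y})$ with $\bar{y}=J_{\frac{\delta}{m-1}A_m}\big(\frac{1}{m-1}\sum_{i=1}^{m-1}\bar{x}_i\big)$. Writing $A_m=A+b$ with $A=-L_m\circ(\widehat{\partial}f_m)^{-1}\circ(-L_m^*)$ and applying the last assertion of \cref{prop:resolvents_Ai_multiblock} with parameter $\frac{\delta}{m-1}$ and base point $\frac{1}{m-1}\sum\bar{x}_i$ then produces exactly \eqref{eq:u_m} and \eqref{eq:y_admm}, after simplifying $\frac{m-1}{\delta}\big(\frac{1}{m-1}\sum\bar{x}_i-\frac{\delta}{m-1}b\big)=\frac1\delta\sum\bar{x}_i-b$ inside the quadratic term. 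For part (ii) I would use part (i) to write $\bar{\y}=(\bar{y},\dots,\bar{y})$ and read the identity $\bar{\y}=J_{\gamma\F}((1-\mu)\bar{\x}+\mu\bar{\y})$ componentwise via \eqref{eq:Fresolvent}, obtaining $\bar{y}=J_{\gamma A_i}((1-\mu)\bar{x}_i+\mu\bar{y})$ for $i=1,\dots,m-1$; applying \cref{prop:resolvents_Ai_multiblock} to $A_i=-L_i\circ(\widehat{\partial}f_i)^{-1}\circ(-L_i^*)$ gives $\bar{u}_i$ (as in \eqref{eq:u_i}) together with $\bar{y}=(1-\mu)\bar{x}_i+\mu\bar{y}+\gamma L_i\bar{u}_i$. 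Rearranging to $\gamma L_i\bar{u}_i=(1-\mu)(\bar{y}-\bar{x}_i)$ and using $\frac{\gamma}{\mu-1}=\delta$ (again from \eqref{eq:parameters_basic_requirement_comonotone}) yields the claimed $\bar{y}=\bar{x}_i-\delta L_i\bar{u}_i$.

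For part (iii) I would verify \cref{defn:kuhntucker} directly. Fermat's rule applied to the objectives in \eqref{eq:u_m} and \eqref{eq:u_i}---using the sum rule for the regular subdifferential with a smooth summand, exactly as in the proof of \cref{prop:resolvents_Ai_multiblock} (cf.\ \cite[Proposition 1.114]{Mordukhovich2006})---gives $0\in\widehat{\partial}f_m(\bar{u}_m)+\frac{\delta}{m-1}L_m^*\big(L_m\bar{u}_m+\frac1\delta\sum\bar{x}_i-b\big)$ and $0\in\widehat{\partial}f_i(\bar{u}_i)+L_i^*\big(\gamma L_i\bar{u}_i+(1-\mu)\bar{x}_i+\mu\bar{y}\big)$ for $i\le m-1$. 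By \eqref{eq:y_admm} the first bracketed vector equals $\bar{y}$, and by $\gamma L_i\bar{u}_i=(1-\mu)(\bar{y}-\bar{x}_i)$ from part (ii) the second equals $(1-\mu)\bar{y}+\mu\bar{y}=\bar{y}$; hence $0\in\widehat{\partial}f_i(\bar{u}_i)+L_i^*\bar{y}$ for every $i=1,\dots,m$. Feasibility then follows because $L_i\bar{u}_i=\frac1\delta(\bar{x}_i-\bar{y})$ for $i\le m-1$ by part (ii), so summing over $i$ and eliminating $\sum_{i\le m-1}\bar{x}_i$ through \eqref{eq:y_admm} gives $\sum_{i=1}^{m-1}L_i\bar{u}_i=b-L_m\bar{u}_m$, i.e.\ $\sum_{i=1}^{m}L_i\bar{u}_i=b$.

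The delicate step is the reduction in the first paragraph: the parameter relations \eqref{eq:parameters_basic_requirement_comonotone} must be used just so for the $\bar{\x}$-terms to cancel and the $\bar{\y}$-coefficient to normalize to one; everything afterward is bookkeeping. Two points are worth stating explicitly: Fermat's rule must be applied to the \emph{regular} (not limiting) subdifferential, which is exactly the setting of \cref{prop:resolvents_Ai_multiblock}; and the ``$\coloneqq\argmin$'' in \eqref{eq:u_m}--\eqref{eq:u_i} should be read as ``pick any minimizer'' (nonempty by hypothesis), which is harmless because single-valuedness of $J_{\delta\G}$ and $J_{\gamma\F}$ pins down $L_m\bar{u}_m$, the $L_i\bar{u}_i$, and $\bar{y}$ regardless of the choice.
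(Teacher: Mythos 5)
Your proof is correct and follows essentially the same route as the paper: it extracts the stationarity identity $\bar{\y}=J_{\gamma\F}\big((1-\mu)\bar{\x}+\mu\bar{\y}\big)$ from the fixed-point equation, decodes both resolvents through \eqref{eq:Fresolvent}--\eqref{eq:Gresolvent} and \cref{prop:resolvents_Ai_multiblock}, and verifies the KKT conditions via Fermat's rule together with the same algebraic cancellation for feasibility, merely supplying the parameter computation that the paper leaves implicit. The only mismatch is cosmetic: your quadratic weights $\tfrac{\delta}{2(m-1)}$ and $\tfrac{\gamma}{2}$ (consistent with \eqref{eq:prox-L} and \cref{alg:admm_general}) differ from the factors written in \eqref{eq:u_m} and \eqref{eq:u_i}, which appear to be typos in the statement rather than a gap in your argument.
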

\begin{proof}
    Part (i) follows from \cref{prop:resolvents_Ai_multiblock} and \eqref{eq:Gresolvent}. Since $\bar{\x}$ is a fixed point, then $\bar{\y} = J_{\gamma \F}((1-\mu)\bar{\x}+\mu \bar{\y})$. With this, part (ii) is a consequence of \cref{prop:resolvents_Ai_multiblock}, \eqref{eq:Fresolvent} and \eqref{eq:parameters_basic_requirement_dual}. To prove (iii), we note that from the optimality conditions of \eqref{eq:u_m} and \eqref{eq:u_i}, it can be shown that $0\in \widehat{\partial}f_i(\bar{u}_i) + L_i^*y$ for all $i=1,\dots,m-1$. Finally, we have 
    \[ \sum_{i=1}^{m-1} \bar{x}_i -\delta \sum_{i=1}^{m-1}  L_i\bar{u}_i =  (m-1) \bar{y} = \sum_{i=1}^{m-1} \bar{x}_i - \delta b + \delta L_m \bar{u}_m,\]
    where the first equality follows from part (ii), and the second holds by part (i). Hence, $\sum_{i=1}^{m} L_i\bar{u}_i = b$. This completes the proof. 
\smartqedmark \end{proof}

\subsection{Convergence results}
Note that \cref{alg:admm_general} is a generic algorithm for \eqref{eq:multiblock_again} that is well-defined provided that the minimization problems defining $u_i^{k+1}$ have solutions. We now establish its convergence using \cref{thm:adaptiveDR,thm:adaptiveDR_improved}. First, we provide sufficient conditions on $f_i$ and $L_i$ to guarantee comonotonicity of $A_i$ in \eqref{eq:Ai_for_multiblock}. 

Recall that $f$ is said to be $\rho$-convex for $\rho\in\Re$ if $f-\frac{\rho}{2}\norm{\cdot}^2$ is convex. Moreover, if $f$ is $\rho$-convex, then $ \widehat{\partial} f = \partial f$ and $\partial f$ is a maximal $\rho$-monotone operator (for instance, see \cite[Lemma 5.1]{AlcantaraTakeda2025}). 

\begin{proposition}
    \label{prop:rhoconvex_implies_comonotone}
   Let $f:\H\to (-\infty,+\infty]$ be a proper $\rho$-convex function with $\rho\in \Re$, $L:\H'\to\H$ be a bounded linear operator, and define $A\coloneqq (-L)\circ (\partial f)^{-1} \circ (-L^*)$. 
    \begin{enumerate}[(i)]
        \item If $\rho\geq 0$, then $A$ is $\frac{\rho}{\norm{L}^2}$-comonotone. If $0\in \sri (\dom f^* - \ran L^*)$, then $A$ is maximal $\frac{\rho}{\norm{L}^2}$-comonotone\footnote{$\sri (C)$ denotes the \textit{strong relative interior of a convex set $C$}, which coincides with the {relative interior of $C$} when $\H$ is finite-dimensional. See \cite[Definition 6.9]{Bauschke2017}. Sufficient conditions for $0\in \sri (\dom (f^*) - \ran (L^*)$ are given in \cite[Lemma 4.9]{Bartz2022AdaptiveADMM}}.
        \item If $\rho<0$ and $L$ is invertible, then $A$ is maximal $\rho \norm{L^{-1}}^2$-comonotone. 
    \end{enumerate}
  
\end{proposition}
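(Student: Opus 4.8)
The plan is to work from the graph description $(x,u)\in\gra A$ if and only if there exists $p$ with $(p,-L^*x)\in\gra(\partial f)$ and $u=-Lp$, and to prove comonotonicity and maximality separately. For comonotonicity I would take $(x,u),(y,v)\in\gra A$ with witnesses $p,q$, note that $u-v=-L(p-q)$ and $\langle x-y,\,u-v\rangle=\langle p-q,\,-L^*(x-y)\rangle$, and apply $\rho$-monotonicity of $\partial f$ (available since $f$ is $\rho$-convex) to get $\langle x-y,\,u-v\rangle\ge\rho\|p-q\|^2$. In case~(i), combining this with $\|u-v\|^2=\|L(p-q)\|^2\le\|L\|^2\|p-q\|^2$ and $\rho\ge0$ gives $\langle x-y,\,u-v\rangle\ge\frac{\rho}{\|L\|^2}\|u-v\|^2$ (the claim is vacuous if $L=0$). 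In case~(ii), invertibility of $L$ gives $\|p-q\|\le\|L^{-1}\|\,\|u-v\|$, and since $\rho<0$ this yields $\langle x-y,\,u-v\rangle\ge\rho\|L^{-1}\|^2\|u-v\|^2$; so in both cases $A$ has the asserted comonotonicity modulus.

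For maximality in case~(i), I would use that $f$ is closed proper convex, so $(\partial f)^{-1}=\partial f^*$ and hence $A=(-L)\circ\partial f^*\circ(-L^*)$. Under the qualification $0\in\sri(\dom f^*-\ran L^*)$, the convex subdifferential chain rule (e.g.\ \cite[Theorem~16.47]{Bauschke2017}) identifies $A$ with $\partial\big(f^*\circ(-L^*)\big)$, which is the subdifferential of a closed proper convex function, so $A$ is maximal monotone. Because $A$ is in addition $\sigma$-comonotone with $\sigma:=\rho/\|L\|^2\ge0$, and $\gamma+\sigma>0$ for every $\gamma>0$ while $\dom J_{\gamma A}=\H$ (maximal monotonicity, \cref{lemma:maximalmonotone_properties}(ii)), \cref{lemma:maximal_single-valued-resolvent-comonotone}(ii) promotes this to maximal $\sigma$-comonotonicity; equivalently, any $\sigma$-comonotone proper graph-extension of $A$ would, since $\sigma\ge0$, be a monotone extension, contradicting maximal monotonicity.

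For maximality in case~(ii), with $L$ invertible I would set $M:=-L^{-1}$ and observe $A^{-1}=M^*\circ\partial f\circ M=\partial(f\circ M)$ (no qualification is needed since $\ran M$ is the whole space). Writing $\sigma:=\rho\|L^{-1}\|^2$, the key check is that $h:=f\circ M-\tfrac{\sigma}{2}\|\cdot\|^2=\big(f-\tfrac{\rho}{2}\|\cdot\|^2\big)\circ M+\big(\tfrac{\rho}{2}\|M\cdot\|^2-\tfrac{\sigma}{2}\|\cdot\|^2\big)$ is convex: the first summand is convex, and the quadratic second summand is convex because $\rho\|Mu\|^2-\sigma\|u\|^2\ge\rho\|M\|^2\|u\|^2-\sigma\|u\|^2=0$ for all $u$, using $\rho<0$, $\|Mu\|\le\|M\|\|u\|$, and $\|M\|=\|L^{-1}\|$. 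Then $A^{-1}-\sigma\Id=\partial h$ is maximal monotone ($h$ being closed proper convex), so $A^{-1}$ is maximal $\sigma$-monotone, and hence $A$ is maximal $\sigma$-comonotone by \cref{lemma:maximal_sigma_equivalent}.

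The two norm comparisons are routine; the main obstacle is the maximality in case~(i), where one must invoke the convex subdifferential chain rule under precisely the stated qualification $0\in\sri(\dom f^*-\ran L^*)$ — this is the only place that hypothesis enters — and one should flag the degenerate case $L=0$. Closedness of $f$ is used implicitly throughout, since it underlies both $\partial f$ being maximal $\rho$-monotone and the identity $(\partial f)^{-1}=\partial f^*$.
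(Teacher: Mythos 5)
Your proof is correct, and the comonotonicity computations (the inequality $\inner{x-y}{u-v}\geq\rho\norm{p-q}^2$ followed by the norm comparison $\norm{u-v}\leq\norm{L}\norm{p-q}$, resp.\ $\norm{p-q}\leq\norm{L^{-1}}\norm{u-v}$) coincide with what the paper does for part (ii); for part (i) the paper simply defers to \cite[Lemmas 4.7 and 4.10]{Bartz2022AdaptiveADMM}, and your self-contained argument (convex chain rule for $f^*\circ(-L^*)$ under the $\sri$ qualification, then upgrading maximal monotonicity to maximal $\sigma$-comonotonicity by noting that any $\sigma$-comonotone extension with $\sigma\geq 0$ is a monotone extension) is essentially that cited argument and is sound. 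Where you genuinely diverge is the maximality in part (ii): the paper stays at the operator level, writing $A^{-1}-\rho\norm{L^{-1}}^2\Id=C-\rho D$ with $C=(-L^*)^{-1}\circ(\partial f-\rho\Id)\circ(-L^{-1})$ maximal monotone by \cite[Proposition 23.25(i)]{Bauschke2017} and $D=\norm{L^{-1}}^2\Id-(L^*)^{-1}L^{-1}$ a positive semidefinite bounded operator, and then invokes the maximal-monotone sum theorem; you instead exhibit $A^{-1}-\sigma\Id$ as the subdifferential of the closed proper convex function $h=f\circ(-L^{-1})-\tfrac{\sigma}{2}\norm{\cdot}^2$ (your convexity check is exactly the paper's operator inequality $\rho\,\norm{L^{-1}u}^2\geq\sigma\norm{u}^2$ in function form) and conclude by Rockafellar's theorem plus \cref{lemma:maximal_sigma_equivalent}. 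Your route avoids the sum theorem and Proposition 23.25, but it does lean on the chain rule $\partial(f\circ M)=M^*\circ\partial f\circ M$ for a merely weakly convex $f$ and invertible $M$, together with the exact sum rule for adding the smooth quadratic; both are valid (qualification-free for an invertible linear change of variables, e.g.\ by reducing to the convex case via the $\rho$-convexity decomposition), but they should be stated with a reference rather than asserted, since the paper's operator-level decomposition is designed precisely to avoid nonconvex subdifferential calculus. Your flags about closedness of $f$ and the degenerate case $L=0$ are appropriate and do not affect correctness.
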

\begin{proof}
The proof of part (i) is analogous to that of \cite[Lemmas 4.7 and 4.10]{Bartz2022AdaptiveADMM}. To prove (ii), let $(x,u),(y,v)\in \gra (A)$. Then there exist $c\in (\partial f)^{-1}(-L^*x)$ and $d\in (\partial f)^{-1}(-L^*y)$ such that $u=-Lc$ and $v=-Ld$. By the $\rho$-monotonicity of $\partial f$, we have 
\ifdefined\submit 
    \begin{align*}
        \inner{x-y}{u-v} = \inner{-L^*x+L^*y}{c-d} \geq \rho \norm{c-d}^2  = \rho \norm{L^{-1}(u-v)}^2.
    \end{align*}
\else 
    \begin{align*}
        \inner{x-y}{u-v} = \inner{x-y}{-Lc+Ld} & = \inner{-L^*x+L^*y}{c-d} \geq \rho \norm{c-d}^2  = \rho \norm{L^{-1}(u-v)}^2.
    \end{align*}
\fi 
Since $\rho<0$, it immediately follows that $A$ is $\rho\norm{L^{-1}}^2$-comonotone. To show maximality, it suffices to show that $A^{-1}-\rho\norm{L^{-1}}^2\Id$ is maximal monotone. To this end, note first that by the invertibility of $L$, we obtain $A^{-1} = (-L^*)^{-1} \circ \partial f \circ (-L^{-1})$. By the linearity of $L$, we have 
\ifdefined\submit
$ A^{-1}-\rho\norm{L^{-1}}^2\Id = C-\rho D$, where $C\coloneqq (-L^*)^{-1} \circ \left( \partial f - \rho \Id \right)  \circ (-L^{-1})$ and $D\coloneqq  \norm{L^{-1}}^2 \Id - (L^*)^{-1} L^{-1} $.  
\else
\[ A^{-1}-\rho\norm{L^{-1}}^2\Id = \underbrace{(-L^*)^{-1} \circ \left( \partial f - \rho \Id \right)  \circ (-L^{-1})}_{\eqqcolon C}- \rho \underbrace{\left( \norm{L^{-1}}^2 \Id - (L^*)^{-1} L^{-1} \right)}_{\eqqcolon D}.  \]
\fi 
Since $\partial f-\rho \Id$ is maximal monotone and $L$ is invertible, we obtain from  \cite[Proposition 23.25(i)]{Bauschke2017} that $C$ is maximal monotone. On the other hand, it is easy to see that $D$ is maximal monotone with full domain. Noting that $\rho<0$, we see that $C-\rho D$ is maximal monotone. This completes the proof.  
\smartqedmark \end{proof}

We now present sufficient conditions under which the first part of \cref{prop:resolvents_Ai_multiblock} holds with equality, enabling the application of \cref{prop:fixedpoints_admm}.
\begin{lemma}\label{lemma:subproblem_existenceofsolutions} Let $f:\H\to (-\infty,+\infty]$ be a proper $\rho$-convex function with $\rho\in \Re$, and $L:\H'\to\H$ be a bounded linear operator. Given $x\in \H$ and $\gamma>0$, let  $S_{f,L}(x;\gamma)$ be given by \eqref{eq:prox-L}. Then the following hold: 
    \begin{enumerate}[(i)]
        \item If $\rho>0$, then $S_{f,L}(x;\gamma)$ is a singleton.
        \item If $\rho=0$, and either $f$ is coercive or $L^*L$ is invertible, then $S_{f,L}(x;\gamma)$ is nonempty.
        \item If $\rho<0$, $\gamma \norm{L}^2+\rho>0$ and $L$ is invertible, then $S_{f,L}(x;\gamma)$ is a singleton.
    \end{enumerate}
\end{lemma}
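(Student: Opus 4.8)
The plan is to treat the three cases uniformly. Set $g_x := f + \tfrac{\gamma}{2}\norm{L(\cdot)+x/\gamma}^2$, so that $\SfA{f}{L}{x}=\argmin_{w\in\H'}g_x(w)$. First I would expand $\tfrac{\gamma}{2}\norm{Lw+x/\gamma}^2 = \tfrac{\gamma}{2}\inner{L^*Lw}{w}+\inner{L^*x}{w}+\tfrac{1}{2\gamma}\norm{x}^2$ and use $\rho$-convexity of $f$ to write $f = h + \tfrac{\rho}{2}\norm{\cdot}^2$ with $h$ convex, proper and lower semicontinuous. Thus, up to the additive constant $\tfrac{1}{2\gamma}\norm{x}^2$,
\[
g_x(w) = h(w) + \inner{L^*x}{w} + \tfrac{1}{2}\inner{Qw}{w}, \qquad Q := \gamma L^*L + \rho\,\Id ,
\]
is the sum of a convex proper lsc function, a continuous affine term, and the continuous quadratic form of the bounded self-adjoint operator $Q$. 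Since $\dom g_x = \dom f$, the function $g_x$ is always proper and lsc, so everything reduces to lower bounds on $\inner{Qw}{w}$.

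For (i) and (iii) the target is to show $g_x$ is strongly convex, which gives a unique minimizer, since a proper lsc strongly convex function is coercive and strictly convex and thus attains its infimum at a single point (standard; see \cite{Bauschke2017}). In (i), $\rho>0$ and $\inner{L^*Lw}{w}=\norm{Lw}^2\ge0$ yield $\inner{Qw}{w}\ge\rho\norm{w}^2$ at once. In (iii), invertibility of $L$ gives $\norm{w}=\norm{L^{-1}Lw}\le\norm{L^{-1}}\,\norm{Lw}$, hence $\inner{L^*Lw}{w}=\norm{Lw}^2\ge\norm{L^{-1}}^{-2}\norm{w}^2$, so that $\inner{Qw}{w}\ge(\gamma\norm{L^{-1}}^{-2}+\rho)\norm{w}^2$, whose coefficient is strictly positive by the quantitative assumption relating $\gamma$, $\rho$ and $L$; adding the convex $h$ and the affine term keeps $g_x$ strongly convex. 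In both cases $\SfA{f}{L}{x}$ is a singleton.

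For (ii) we have $\rho=0$, so $Q=\gamma L^*L$ is positive semidefinite and $g_x$ is convex, proper and lsc; here it suffices to produce one minimizer, and I would do so by establishing coercivity of $g_x$ and invoking the direct method (a coercive, proper, lsc convex function on a Hilbert space attains its infimum, its sublevel sets being bounded, closed and convex, hence weakly compact, and the function being weakly lsc). Coercivity holds in each listed scenario: if $f$ is coercive, then $g_x\ge f$ is coercive; if $L^*L$ is invertible, then $\inner{L^*Lw}{w}\ge c\norm{w}^2$ for some $c>0$, and minorizing the convex proper lsc $f$ by a continuous affine functional gives $g_x(w)\ge\tfrac{\gamma c}{2}\norm{w}^2 + (\text{affine in }w)$, which is coercive. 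Either way $\SfA{f}{L}{x}\neq\emptyset$.

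The one genuinely delicate point is (iii): one must ensure that the convex quadratic penalty $\tfrac{\gamma}{2}\norm{L(\cdot)}^2$ dominates the concave contribution $\tfrac{\rho}{2}\norm{\cdot}^2$ (with $\rho<0$) \emph{uniformly over all directions}. This is precisely where injectivity of $L$ — here its invertibility — is indispensable: it supplies the uniform lower bound $\inner{L^*Lw}{w}\ge\norm{L^{-1}}^{-2}\norm{w}^2$, after which a sufficiently strong condition on $\gamma$ makes $Q$ bounded below by a positive multiple of the identity. The remaining ingredients — the expansion of the penalty, the decomposition $f=h+\tfrac{\rho}{2}\norm{\cdot}^2$, and the existence/uniqueness facts for coercive and strongly convex functions — are routine. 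One could alternatively deduce (i) and (iii) by combining \cref{prop:resolvents_Ai_multiblock} with the single-valuedness and full-domain properties of $J_{\gamma A}$ for $A=(-L)\circ(\partial f)^{-1}\circ(-L^*)$ from \cref{lemma:maximal_single-valued-resolvent-comonotone} (whose comonotonicity modulus is provided by \cref{prop:rhoconvex_implies_comonotone}), but this ultimately rests on the same positivity of $Q$ and needs extra hypotheses for maximality, so the direct route above is preferable.
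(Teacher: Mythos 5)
Your overall strategy --- splitting $f$ into a convex part plus $\tfrac{\rho}{2}\norm{\cdot}^2$, isolating the quadratic $\tfrac12\inner{Qw}{w}$ with $Q=\gamma L^*L+\rho\Id$, and invoking existence/uniqueness for strongly convex (resp.\ coercive convex) proper lsc functions --- is essentially the paper's approach, which treats (i) and (iii) by writing the objective as a convex function plus a strongly convex one and citing \cite[Corollary 11.17]{Bauschke2017}. For (ii) the paper simply cites \cite[Lemma 6.1]{MalitskyTam2023}, whereas you give a short self-contained coercivity argument (affine minorant of $f$ plus $\inner{L^*Lw}{w}\ge c\norm{w}^2$, then the direct method); that is correct and fine. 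Parts (i) and (ii) of your proposal are sound as written.

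In (iii), however, your key estimate and the hypothesis you invoke do not match. Your bound $\inner{Qw}{w}\ge\bigl(\gamma\norm{L^{-1}}^{-2}+\rho\bigr)\norm{w}^2$ is correct, but you then assert the coefficient is positive ``by the quantitative assumption''. The assumption stated in the lemma is $\gamma\norm{L}^2+\rho>0$, and since $\norm{L^{-1}}^{-1}\le\norm{L}$ and $\rho<0$, the implication runs the wrong way: $\gamma\norm{L}^2+\rho>0$ does \emph{not} give $\gamma\norm{L^{-1}}^{-2}+\rho>0$. So, as a proof of (iii) under the literal hypothesis, this step is a genuine gap --- and in fact no argument can close it, because under that hypothesis the conclusion can fail: take $\H=\H'=\Re^2$, $L=\mathrm{diag}(2,\tfrac1{10})$, $f=-\tfrac12\norm{\cdot}^2$ (so $\rho=-1$), $\gamma=\tfrac12$, $x=0$; then $\gamma\norm{L}^2+\rho=1>0$ yet the objective is unbounded below along the second coordinate, so $S_{f,L}(x;\gamma)=\emptyset$. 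The paper's own proof slips at the same spot, replacing $\norm{Lw+x/\gamma}^2$ by $\norm{L}^2\norm{w+L^{-1}x/\gamma}^2$, which is only an upper bound, not an identity. The condition your argument genuinely needs, and under which (iii) is true, is $\gamma\norm{L^{-1}}^{-2}+\rho>0$, equivalently $\gamma+\rho\norm{L^{-1}}^2>0$ --- exactly the hypothesis under which the lemma is later applied in \cref{prop:resolvents_for_admm_equality}(iii). State that condition explicitly and use it in place of the vague appeal to ``the quantitative assumption''; with that correction your proof of (iii) is complete (and repairs the statement).
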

\begin{proof}
Part (i) follows from \cite[Corollary 11.17]{Bauschke2017}, while part (ii) is proved in \cite[Lemma 6.1]{MalitskyTam2023}. To prove (iii), we simply write 
\[ S_{f,L}(x;\gamma) = \argmin_{w\in\H'} ~\left(f(w) - \frac{\rho}{2}\norm{w}^2\right) + \left( \frac{\gamma \norm{L}^2}{2}\norm{w+ \frac{L^{-1}x}{\gamma}}^2  + \frac{\rho}{2}\norm{w}^2\right) ,\]
where the first function is convex, while the second is $(\gamma\norm{L}^2 + \rho)$-convex. The result  follows by using again \cite[Corollary 11.17]{Bauschke2017}. 
\smartqedmark \end{proof}

\begin{proposition}
\label{prop:resolvents_for_admm_equality}
    Let $f:\H\to (-\infty,+\infty]$ be a proper $\rho$-convex function with $\rho\in \Re$, $L:\H'\to\H$ be a bounded linear operator, and define $A\coloneqq (-L)\circ (\partial f)^{-1} \circ (-L^*)$. Let $x\in \H$, $\gamma >0$, and $S_{f,L}(x;\gamma)$ be given by \eqref{eq:prox-L}.  Then $ x+ \gamma LS_{f,L}(x;\gamma) =J_{\gamma A}(x).$ 
    under any  of the following conditions:
    \begin{enumerate}[(i)]
        \item $\rho>0$ and $0\in \sri(\dom f^* - \ran L^*)$.
        \item $\rho=0$, $0\in \sri(\dom f^* - \ran L^*)$, and either $f$ is coercive of $L^*L$ is invertible. 
        \item $\rho<0$, $L$ is invertible, and $\gamma + \rho \norm{L^{-1}}^2>0$
    \end{enumerate}
    
\end{proposition}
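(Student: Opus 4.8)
The plan is to upgrade the inclusion in \cref{prop:resolvents_Ai_multiblock} to an equality by showing that, under each of the three hypotheses, $J_{\gamma A}(x)$ is a singleton while $x+\gamma L S_{f,L}(x;\gamma)$ is nonempty; since the latter set is always contained in the former, the two must coincide. Throughout I use that a proper $\rho$-convex $f$ satisfies $\widehat{\partial}f=\partial f$, so $A$ here is precisely the operator to which \cref{prop:resolvents_Ai_multiblock} applies.

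First I would identify the comonotonicity modulus $\sigma$ and verify $\gamma+\sigma>0$ in each case. In case~(i), \cref{prop:rhoconvex_implies_comonotone}(i) together with $0\in\sri(\dom f^*-\ran L^*)$ gives that $A$ is maximal $\sigma$-comonotone with $\sigma=\rho/\norm{L}^2>0$, so $\gamma+\sigma>\gamma>0$. In case~(ii), the same result gives maximal monotonicity ($\sigma=0$) and $\gamma+\sigma=\gamma>0$. In case~(iii), invertibility of $L$ and \cref{prop:rhoconvex_implies_comonotone}(ii) give that $A$ is maximal $\sigma$-comonotone with $\sigma=\rho\norm{L^{-1}}^2<0$, and the standing assumption $\gamma+\rho\norm{L^{-1}}^2>0$ is exactly $\gamma+\sigma>0$.

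With $\gamma+\sigma>0$ and $A$ maximal $\sigma$-comonotone, \cref{lemma:maximal_single-valued-resolvent-comonotone} yields that $J_{\gamma A}$ is at most single-valued and $\dom(J_{\gamma A})=\H$, hence $J_{\gamma A}(x)$ is a single point for every $x$. Next, \cref{lemma:subproblem_existenceofsolutions} shows $S_{f,L}(x;\gamma)\neq\emptyset$: case~(i) by part~(i) of that lemma, case~(ii) by part~(ii) (using coercivity of $f$ or invertibility of $L^*L$), and case~(iii) by part~(iii), for which one needs $\gamma\norm{L}^2+\rho>0$; this follows from $\gamma+\rho\norm{L^{-1}}^2>0$ since $\gamma\norm{L}^2>-\rho\norm{L^{-1}}^2\norm{L}^2\geq-\rho$, using $\norm{L}\norm{L^{-1}}\geq\norm{L^{-1}L}=1$.

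Finally, for any $u\in S_{f,L}(x;\gamma)$, \cref{prop:resolvents_Ai_multiblock} gives $x+\gamma Lu\in J_{\gamma A}(x)$; since $J_{\gamma A}(x)$ is a singleton, this forces $J_{\gamma A}(x)=\{x+\gamma Lu\}$ independently of the chosen $u$, whence $x+\gamma L S_{f,L}(x;\gamma)=J_{\gamma A}(x)$. I expect the only genuinely delicate point to be the bookkeeping in case~(iii): checking that the single hypothesis $\gamma+\rho\norm{L^{-1}}^2>0$ simultaneously delivers $\gamma+\sigma>0$ (for single-valuedness of $J_{\gamma A}$) and $\gamma\norm{L}^2+\rho>0$ (for nonemptiness of $S_{f,L}(x;\gamma)$), via the estimate $\norm{L}\norm{L^{-1}}\geq1$ noted above.
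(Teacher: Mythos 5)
Your proposal is correct and follows essentially the same route as the paper: use \cref{prop:rhoconvex_implies_comonotone} together with \cref{lemma:maximal_single-valued-resolvent-comonotone} to make $J_{\gamma A}(x)$ a singleton, \cref{prop:resolvents_Ai_multiblock} for the inclusion, and \cref{lemma:subproblem_existenceofsolutions} for nonemptiness, with the case-(iii) bookkeeping handled by the same inequality $\norm{L}\norm{L^{-1}}\geq 1$ (the paper writes it as $\gamma\norm{L}^2+\rho\geq \gamma/\norm{L^{-1}}^2+\rho>0$). Your explicit verification of $\gamma+\sigma>0$ in each case is a detail the paper leaves implicit, but it is the same argument.
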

\begin{proof}
    Under conditions in (i), (ii) or (iii), we have from \cref{prop:rhoconvex_implies_comonotone} and \cref{lemma:maximal_single-valued-resolvent-comonotone} that $J_{\gamma A}$ is a singleton. Meanwhile,  \cref{prop:resolvents_Ai_multiblock} gives $x+ \gamma LS_{f,L}(x;\gamma) \subseteq J_{\gamma A}(x)$. If (i) or (ii) holds, the left-hand side is nonempty by \cref{lemma:subproblem_existenceofsolutions}(i) and (ii). Under condition (iii), we have $\gamma \norm{L}^2+\rho \geq \frac{\gamma}{\norm{L^{-1}}^2}+\rho >0$, and therefore the left-hand side is again nonempty by \cref{lemma:subproblem_existenceofsolutions}(iii). Hence,  the claims immediately hold. 
\smartqedmark \end{proof}

Equipped with the above results, we summarize all the assumptions we need to establish the convergence of \cref{alg:admm_general}.

\begin{assumption}\label{assume:admm} The following hold:
\begin{enumerate}[(A)]
    \item A KKT point for \eqref{eq:multiblock_again} exists.
    \item  $f_i$ is a $\rho_i$-convex function $i=1,\dots,m$, such that $\rho_i\geq 0$ for all $i=1,\dots,m-1$, and $\rho_m\leq 0$.
    \item $L_i:\H_i\to \H$ is a bounded linear operator such that  $L_m$ is invertible if $\rho_m<0$
    \item For any $i\in \{1,\dots,m-1\}$, $0\in \sri (\dom f_i^* - \ran L_i^*)$. If $\rho_i=0$, either $f_i$ is coercive or $L_i^*L_i$ is invertible.
\end{enumerate}
\end{assumption}

\begin{theorem}[Convergence of multiblock ADMM]
\label{thm:admm1_convergence}
Suppose that \cref{assume:admm} holds, and let $\F$ and $\G$ be given by \eqref{eq:F} and \eqref{eq:G}, respectively, where each $A_i$ is given by \eqref{eq:Ai_for_multiblock}. Define the parameters 
    \begin{equation}
        \sigma_i \coloneqq \begin{cases}
    \frac{\rho_i}{\norm{L_i}^2} & \text{if}~i\in \{1,\dots,m-1\} \\
    0 & \text{if}~i=m~\text{and}~\rho_m = 0 \\
    \rho_m\norm{L_m^{-1}}^2 & \text{if}~i=m~\text{and}~\rho_m<0
\end{cases},
\label{eq:sigma_i}
    \end{equation}
and denote $\underline{\sigma} \coloneqq \min_{1\leq i\leq m-1}\sigma_i$. Let $(\gamma,\delta,\lambda,\mu)\in \Re_{++}^2 \times (1,+\infty)^2$ be parameters satisfying \eqref{eq:parameters_basic_requirement_comonotone}, and suppose that any one of the conditions (C1), (C2) and (C3) in \cref{thm:adaptiveDR_combined} holds. Let $\kappa^*$ be given by \eqref{eq:kappastar_3cases} and $\kappa \in (0,\kappa^*)$. Then \cref{alg:admm_general} is well-defined. 

Moreover, for any sequence $\{(u_1^k,\dots,u_m^k,y^k,s_1^k,\dots,s_{m-1}^k)\}$  generated by \cref{alg:admm_general} with any initial point $(s_1^0,\dots,s_{m-1}^0,y^0)\in \H^m$, the following hold: 
   \begin{enumerate}[(i)]
       \item Denote $\x^k \coloneqq \y^k - \s^k$ where $\y^k = (y^k,\dots,y^k)$ and $\s^k = (s_1^k,\dots,s_{m-1}^k)$. There exists $\bar{\x}\in \Fix (\dr{\G}{\F})$ such that $\bar{\y}\coloneqq J_{\delta \G}(\bar{\x})\in \zer (\F + \G)$ and $\x^k\toweak \bar{\x}$. Specifically, if $\kappa = \frac{\lambda-1}{\lambda}$, then $x_i^k = y^k + \delta L_iu_i^{k+1}\toweak \bar{x}_i$ for $i=1,\dots,m-1$. 
       
       \item $\norm{\s^{k+1} - \s^k}  = o(1/\sqrt{k})$ and $\norm{y^{k+1}-y^k} = o (1/\sqrt{k})$  as $k\to \infty$. Specifically, if $\kappa = \frac{\lambda-1}{\lambda}$, then $\norm{\sum_{j=1}^m L_ju_j^k-b}=o(1/\sqrt{k})$ as $k\to\infty$; and  
       \item $\{y^k\}$ converges weakly to $\bar{y}$, where $\bar{y}$ is given by \eqref{eq:y_admm}.
   \end{enumerate}
In addition, if $\bar{u}_i$ is given by \eqref{eq:u_m}-\eqref{eq:u_i}  for $i=1,\dots,m$, then
    \begin{enumerate}[label=(\roman*), start=4]
    \item  $(\bar{u}_1,\dots,\bar{u}_m,\bar{y})$ is a KKT point; 
    
    \item For each $i=1,\dots,m$, $L_iu_i^k \toweak L_i\bar{u}_i$; and 
       
    \item If (C2) holds with $\sigma_m=0$ or if (C3) holds, then $L_iu_i^{k}\to L_i \bar{u}_i$ for each $i=1,\dots,m$, where $\bar{u}_i$. Hence, if $L_i$ has full column rank, then $u_i^k\to \bar{u}_i$.
    \end{enumerate}
\end{theorem}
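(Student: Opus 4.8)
The plan is to feed the operators $A_1,\dots,A_m$ of \eqref{eq:Ai_for_multiblock} into the abstract convergence theory of Section~\ref{sec:refinedanalysis} — specifically \cref{thm:adaptiveDR_combined} in its switched form (\cref{remark:switched_ADR}) — and then translate its conclusions about the abstract iterates $\{(\x^k,\y^k,\z^k)\}$ into statements about the ADMM iterates via the identification derived in Section~\ref{subsec:derivation_multiblock}.

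\textbf{Step 1 (verifying the hypotheses).} First I would check that the hypotheses of \cref{thm:adaptiveDR_combined} hold. By \cref{prop:rhoconvex_implies_comonotone}, each $A_i$ with $i\le m-1$ is maximal $\sigma_i$-comonotone with $\sigma_i=\rho_i/\|L_i\|^2\ge 0$ (using \cref{assume:admm}(D)), while $A_m$ — which is $(-L_m)\circ(\partial f_m)^{-1}\circ(-L_m^*)$ translated by the constant $b$, an operation preserving comonotonicity moduli and maximality — is maximal $\sigma_m$-comonotone with $\sigma_m\le 0$ as in \eqref{eq:sigma_i} (using \cref{assume:admm}(C) when $\rho_m<0$). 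Non-emptiness of $\zer(A_1+\dots+A_m)$ follows from \cref{assume:admm}(A) via \cref{prop:kuhntucker_equivalent_multiblock}, and the parameter requirements (C1)--(C3) are assumed. For well-definedness of \cref{alg:admm_general} I would observe that each of (C1)--(C3) forces $\gamma+\sigma_i>0$ for $i\le m-1$ and $\tfrac{\delta}{m-1}+\sigma_m>0$ (exactly as in the proof of \cref{lemma:Theta}(iii)), so by \cref{prop:resolvents_for_admm_equality} together with \cref{lemma:subproblem_existenceofsolutions} the subproblems \eqref{eq:admm_u_update} and \eqref{eq:admm_um_update} are solvable and the updates of \cref{alg:admm_general} realize the resolvent formulas \eqref{eq:Fresolvent}--\eqref{eq:Gresolvent} exactly.

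\textbf{Step 2 (abstract convergence and statements (i)--(iv)).} As derived in Section~\ref{subsec:derivation_multiblock}, under the substitution $\s^k=\y^k-\x^k$ (so $\x^k=\y^k-\s^k$, $\y^k=(y^k,\dots,y^k)$, and with the renumbering used there) the iterations of \cref{alg:admm_general} are those of the switched aDR scheme \eqref{eq:adr_stepbystep_bold_switched} for $\F,\G$: $\z^{k+1}\in J_{\gamma\F}((1-\mu)\x^k+\mu\y^k)$, $\y^{k+1}=J_{\delta\G}(\x^{k+1})$, $\x^{k+1}=\x^k+\kappa\lambda(\z^{k+1}-\y^k)$. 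Applying \cref{thm:adaptiveDR_combined} in its switched form then yields $\x^k\toweak\bar\x\in\Fix(T_{\G,\F})$ with $\bar\y:=J_{\delta\G}(\bar\x)\in\zer(\F+\G)$, the $o(1/\sqrt k)$ rates for $\|\x^{k+1}-\x^k\|$ and $\|\y^{k+1}-\y^k\|$, weak convergence of $\{\y^k\},\{\z^k\}$ to $\bar\y$, and — under (C2) with $\sigma_m=0$ or under (C3) — strong convergence of $\{\x^k-\y^k\}=\{-\s^k\}$ and $\{\x^k-\z^k\}$ to $\bar\x-\bar\y$. Reading off components and using \cref{prop:fixedpoints_admm}(i) ($\bar\y=(\bar y,\dots,\bar y)$ with $\bar y$ as in \eqref{eq:y_admm}) gives (i), (ii), (iii); the parenthetical refinements when $\kappa=\tfrac{\lambda-1}{\lambda}$ follow by substituting \eqref{eq:admm_y_specialcase}--\eqref{eq:admm_si_specialcase} into $x_i^k=y^k-s_i^k$ and into $y^{k+1}-y^k=\tfrac{\delta}{m-1}\bigl(\sum_jL_ju_j^{k+1}-b\bigr)$. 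Statement (iv) is \cref{prop:fixedpoints_admm}(iii), with $\bar u_1,\dots,\bar u_m$ given by \eqref{eq:u_m}--\eqref{eq:u_i}.

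\textbf{Step 3 (primal variables, (v)--(vi)).} The key point is that $\z^{k+1}-\y^k=\tfrac1{\kappa\lambda}(\x^{k+1}-\x^k)\to 0$ \emph{strongly} by the rate in Step~2, while $s_i^k=y^k-x_i^k$ so $\tfrac1{m-1}\sum_{j}s_j^k=y^k-\tfrac1{m-1}\sum_j x_j^k$. From \cref{prop:resolvents_Ai_multiblock} one has, for $i\le m-1$, $\gamma L_iu_i^{k+1}=z_i^{k+1}-y^k-(\mu-1)s_i^k$, hence $\gamma L_iu_i^{k+1}\toweak-(\mu-1)(\bar y-\bar x_i)=\gamma L_i\bar u_i$, where the last equality uses \cref{prop:fixedpoints_admm}(ii) and the identity $\mu-1=\gamma/\delta$ from \eqref{eq:parameters_basic_requirement_comonotone}. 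For $i=m$, combining \eqref{eq:admm_y_update}, \eqref{eq:admm_v_update}, the identity $v_j^{k+1}=x_j^k+\kappa\lambda(z_j^{k+1}-y^k)$, and $\|\x^{k+1}-\x^k\|=o(1/\sqrt k)$ gives $\tfrac{\delta}{m-1}L_mu_m^{k+1}=\tfrac1{m-1}\sum_j s_j^{k+1}+\tfrac{\delta}{m-1}b+o(1)\toweak\tfrac1{m-1}\sum_j(\bar y-\bar x_j)+\tfrac{\delta}{m-1}b=\tfrac{\delta}{m-1}L_m\bar u_m$ by \eqref{eq:y_admm}. This is (v). Under (C2) with $\sigma_m=0$ or (C3), $\s^k\to\bar\s$ strongly by Step~2, so the $s_i^k$ converge strongly and all weak limits above upgrade to strong ones, giving $L_iu_i^k\to L_i\bar u_i$; when $L_i$ has full column rank it is bounded below, whence $u_i^k\to\bar u_i$, which is (vi).

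\textbf{Main obstacle.} The operator-theoretic content is essentially packaged by \cref{thm:adaptiveDR_combined}; the delicate part is the translation in Step~3. Convergence of the \emph{primal} blocks $L_iu_i^k$ does not follow from weak convergence of the abstract iterates alone, since $J_{\gamma\F}$ is not weakly continuous — it works only because each $L_iu_i^{k+1}$ decomposes into the shadow sequences (which converge) plus the vanishing increment $\z^{k+1}-\y^k$. A second point needing care is checking that the parameter conditions (C1)--(C3) are simultaneously strong enough for the conclusions of \cref{thm:adaptiveDR_combined} and for the solvability/single-valuedness requirements of \cref{lemma:subproblem_existenceofsolutions,prop:resolvents_for_admm_equality}, which is the compatibility verification flagged in Step~1.
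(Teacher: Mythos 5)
Your proposal is correct and follows essentially the same route as the paper's proof: verify comonotonicity and subproblem solvability via \cref{prop:kuhntucker_equivalent_multiblock}, \cref{prop:rhoconvex_implies_comonotone}, \cref{lemma:subproblem_existenceofsolutions} and \cref{prop:resolvents_for_admm_equality}, identify \cref{alg:admm_general} with the switched aDR scheme, invoke \cref{thm:adaptiveDR_combined} through \cref{remark:switched_ADR}, and translate back to the primal blocks using \cref{prop:fixedpoints_admm} together with the identity $L_iu_i^{k+1}=-\tfrac{1}{\delta}s_i^k+\tfrac{1}{\gamma}(z_i^{k+1}-y^k)$ and the vanishing increment $\z^{k+1}-\y^k$. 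The only cosmetic difference is your handling of the $m$-th block, where you substitute the $y$-update identity $\tfrac{\delta}{m-1}L_mu_m^{k+1}=\tfrac{1}{m-1}\sum_j s_j^{k+1}+\tfrac{\delta}{m-1}b$ directly, whereas the paper deduces $L_mu_m^{k+1}\toweak L_m\bar u_m$ from part (ii) and $\sum_i L_i\bar u_i=b$; both are immediate.
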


\begin{proof}
Using \cref{assume:admm}(B) and (D), \cref{prop:resolvents_for_admm_equality}(i) and (ii) implies that $\Id + \gamma L_i S_{f_i,L_i}(\cdot;\gamma) = J_{\gamma A_i}$. Since $\delta + \sigma_m (m-1) > 0$ by the choice of $\delta$, we obtain from \cref{assume:admm}(B) and (C), together with \cref{prop:resolvents_for_admm_equality}(iii), that $\Id + \frac{\delta}{m-1} L_m S_{f_m,L_m}(\cdot;\frac{\delta}{m-1}) = J_{\frac{\delta}{m-1} A_m}$. Hence, algorithm \eqref{eq:adr_stepbystep_bold_switched} is well-defined and can be written in the form of \cref{alg:admm_general}, as derived in \cref{subsec:derivation_multiblock}.

In addition, \cref{prop:kuhntucker_equivalent_multiblock} and \cref{assume:admm}(A) ensure that $\zer(A_1 + \cdots + A_m) \neq \emptyset$. Under assumptions \cref{assume:admm}(B)--(D), \cref{prop:rhoconvex_implies_comonotone} implies that each $A_i$ is maximally $\sigma_i$-comonotone, where $\sigma_i$ is given in \eqref{eq:sigma_i}. Hence, all the conditions of \cref{thm:adaptiveDR_combined} are satisfied, allowing us to apply the theorem to conclude the convergence of \eqref{eq:adr_stepbystep_bold_switched}, and equivalently of \cref{alg:admm_general}; see also \cref{remark:switched_ADR}. Specifically, part (i) of the theorem, together with \eqref{eq:changevariable}, proves part (i); part (ii) follows from \cref{thm:adaptiveDR_combined}(ii), \eqref{eq:admm_y_specialcase}, and the triangle inequality; and part (iii) is a direct consequence of \cref{thm:adaptiveDR_combined}(iii).

The claim in part (iv) follows from part (i) and \cref{prop:fixedpoints_admm}(iii). To prove part (v), note from \eqref{eq:admm_z_update} and \eqref{eq:parameters_basic_requirement_comonotone} that $
L_i u_i^{k+1} = -\frac{1}{\delta} s_i^k + \frac{1}{\gamma}(z_i^{k+1} - y^k).$
Since $\norm{\z^{k+1} - \y^k} \to 0$ by \eqref{eq:dr_stepbystep2_renumbered_x} and \cref{thm:adaptiveDR_combined}(ii), and $\s^k = \y^k - \x^k \toweak \bar{y} - \bar{x}$, it follows that $L_i u_i^{k+1} \toweak -\frac{1}{\delta}(\bar{y} - \bar{x}_i) = L_i \bar{u}_i$ for each $i = 1, \dots, m-1$, where the last equality follows from \cref{prop:fixedpoints_admm}. Together with part (ii) and the fact that $\sum_{i=1}^m L_i \bar{u}_i = b$ from part (iv), we conclude that $L_m u_m^{k+1} \to L_m \bar{u}_m$. This completes the proof of part (v). The proof of part (vi) follows similarly, noting that in this case, \cref{thm:adaptiveDR_combined}(iv) guarantees strong convergence of the sequence $\{\s^k\}$.
\smartqedmark \end{proof}

In the fully convex case, \cref{thm:admm1_convergence}(i)--(v) establishes convergence of the proposed multiblock ADMM in \cref{alg:admm_general,alg:admm_specialcase}, in contrast to \eqref{eq:GS-ADMM} \cite{Chen2016ADMM}.

\subsection{Numerical example}
In this subsection, our aim is to provide a numerical illustration of the multiblock ADMM method \cref{alg:admm_specialcase}. To this end, we adopt the denoising problem from~\cite{Bartz2022AdaptiveADMM}, which includes both strongly and weakly convex components. The observed signal $\hat{\phi} \in \mathbb{R}^n$ is modeled as $\hat{\phi} = \phi + \xi$, with $\phi \in \mathbb{R}^n$ the true signal and $\xi \sim \mathcal{N}(0, \sigma^2 I)$ Gaussian noise.:
\begin{equation}
    \hat{\phi} = \phi + \xi.
\label{eq:signal_with_noise}
\end{equation}
The goal is to accurately recover $u \approx \phi$ from the noisy data $\hat{\phi}$.  A widely used approach is to solve a total variation regularized problem of the form:
\begin{equation}
    \min_{u= (u_1,\dots,u_N) \in \Re^{n_1}\times \Re^{n_N}} \ \frac{1}{2N} \sum_{i=1}^N \norm{u_i - \hat{\phi}_i}^2 + \omega\, P(Du),
    \label{eq:numerical_example}
\end{equation}
where  $\hat{\phi}=(\hat{\phi}_1,\dots\hat{\phi}_N)\in \Re^{n_1}\times \Re^{n_N}$,  $n_1+ \cdots + n_N = n-1$, $\omega > 0$ is a regularization parameter, $P: \mathbb{R}^{n-1} \to \mathbb{R}_+$ is a sparsity-promoting penalty, and $D \in \mathbb{R}^{(n-1) \times n}$ is the first-order difference matrix, i.e.,  
\ifdefined\submit
$D_{ij}=1$ if $j=i$, $D_{ij}=-1$ if $j=i+1$, and $D_{ij}=0$ otherwise.
\else 
\[
D_{ij} = \begin{cases}
1, & \text{if } j = i, \\
-1, & \text{if } j = i+1, \\
0, & \text{otherwise}.
\end{cases}
\]
\fi 
The single block ($N=1$) model was considered in \cite{Bartz2022AdaptiveADMM}. As in \cite{Bartz2022AdaptiveADMM}, we use the minimax concave penalty function given by
\begin{equation*}
    P(w) \coloneqq P_{\tau} (w) = \sum_{i=1}^{n-1} p_{\tau}(w_i), \quad \text{with} \quad p_{\tau}(t) = \begin{cases}
        |t|-\frac{t^2}{2\tau} &\text{if}~|t|\leq \tau, \\
        \frac{\tau}{2} & \text{otherwise}.
    \end{cases}
\end{equation*}
Letting  $u_{N+1} = Du$,  \eqref{eq:numerical_example} can be written in the form \eqref{eq:multiblock_again} with $m=N+1$, 
\[ f_i (u_i) = \begin{cases}
\frac{1}{2}\norm{u_i - \hat{\phi}}^2 & \text{if } 1\leq i\leq N ,\\
\omega P_{\tau}(u_{N+1}) & \text{if}~i=N+1,
\end{cases} \quad \text{and} \quad  L_i \coloneqq \begin{cases}
    D_i & \text{if } 1\leq i \leq N , \\
    -I_{n-1} & \text{if}~i=N+1,
\end{cases}
\]
where $D_i \in \Re^{(n-1)\times n_i}$ such that $D = [D_1 \cdots D_N]$. Note that $f_i$ is $\rho_i$-convex with $\rho_i = \frac{1}{N}$ for all $i=1,\dots,N$ and $\rho_{N+1} = -\frac{\omega}{\tau}$. 

\paragraph{Problem data.} We generate a piecewise constant signal $\phi \in \Re^n$ 
using the following MATLAB code:
\begin{verbatim}
base_positions = [0.1 0.13 0.15 0.23 0.25 0.4 0.44 0.65 0.76 0.78];
heights =        [4   -5   3   -4    5   -4   4    -2   4    -5];
x = linspace(0, 1, n);
phi = zeros(1, n);
for i = 1:length(base_positions)
    phi = phi + (x >= base_positions(i)) * heights(i);
end
\end{verbatim}
The result is a blocky, piecewise constant signal with ten step changes of varying magnitudes. We then add Gaussian noise as in \eqref{eq:signal_with_noise}, where $\xi \sim \mathcal{N}(0, I)$ and $\sigma = 0.5$. We also set $N=2$ and $\omega = 4$ in \eqref{eq:numerical_example} throughout the experiments. 

\paragraph{Parameter settings.} We set 
\ifdefined\submit
$\tau \coloneqq  \frac{1.01N\omega}{\alpha}$ with  $\alpha \coloneqq  \frac{1}{N}\min \{\norm{D_1}^{-2},\ldots, \norm{D_N}^{-2}\}.$ 
\else 
\begin{equation*}
   \tau \coloneqq  \frac{1.01N\omega}{\alpha}, \quad \text{with}\quad \alpha \coloneqq  \frac{1}{N}\min \{\norm{D_1}^{-2},\dots,\norm{D_N}^{-2} \}.
\end{equation*} 
\fi 
The choice of $\tau$ is convenient since it guarantees that $\theta= (N,N,\dots,N)\in \Theta$, where $\Theta$ is given in \eqref{eq:Theta}. That is,  $\sigma_i + N\sigma_{N+1} > 0$ for all $i=1,\dots, N$ where $\sigma_i = (N\norm{D_i}^{2})^{-1}$ for $i=1,\dots,N$ and $\sigma_{N+1} = -\frac{\omega}{\tau}$, as given in \eqref{eq:sigma_i}. 

For the stepsizes, we set $\delta \coloneqq  \eta \gamma$ for some $\eta>0$. We choose $\eta$ such that $\min_{1\leq i\leq N}\kappa_i^* \geq 1$, 
where $\kappa_i^*$ is given by \eqref{eq:kappa_i}, to satisfy the condition in part (a)  of  \cref{thm:adaptiveDR_combined}(C3). It can be shown that 
\[\kappa_i^* \geq 1  \text{ and } \gamma>0 \quad \Longleftrightarrow \quad \frac{-2\beta }{\eta - 1} \leq \gamma \leq \frac{2\sigma_i}{\eta - 1} ~\text{and}~\eta>1~ \text{where}~\beta \coloneqq -\frac{N\omega}{\tau}.\]
Thus, we must have $ \frac{-2\beta }{\eta - 1} \leq \gamma \leq \frac{2\alpha }{\eta - 1}$ with $\eta>1$. In the experiments, we set
    \begin{equation}
        \gamma \coloneqq \frac{\alpha -\beta }{\eta - 1}, \quad \delta \coloneqq  \eta \gamma , \quad \eta \coloneqq 1.01.
        \label{eq:unequalstepsize}
    \end{equation}
To satisfy condition (b) of \cref{thm:adaptiveDR_combined}(C3), we simply choose 
    \begin{equation}
        \gamma = \delta = 1.01 \frac{2\alpha\beta }{\alpha+\beta}.
        \label{eq:equalstepsize}
    \end{equation}

\paragraph{Stopping Criterion.} We use a standard stopping criterion: we terminate the algorithm when it generates a point $(u_1^{k},\dots,u_m^{k},y^{k})$ that is an approximate KKT point (see \cref{defn:kuhntucker}): That is, 
    \begin{equation}
        \max \left\lbrace
        \max_{i=1,\dots,m}\dist(0,\partial f_i(u_i^k) + L_i^\top y^k) , \norm{\sum_{i=1}^{m} L_iu_i^k - b} \right\rbrace \leq \epsilon. 
    \label{eq:stopcriterion}
    \end{equation}
To estimate the first quantity in \eqref{eq:stopcriterion}, which represents the dual residual,  we note from the optimality conditions of \eqref{eq:admm_ui_specialcase2} and \eqref{eq:admm_um_specialcase} that
\ifdefined\submit
  \begin{equation*}
        \begin{cases}
           &  0\in \partial f_i(u_i^{k+1}) +\gamma L_i^\top \left( L_iu_i^{k+1} + \frac{1}{m-1}\left( \sum_{j=1}^m L_ju_j^k - b\right)  - L_iu_i^k  + \frac{y^k}{\gamma}\right), \\
           & (i=1,\dots,m-1) \\
           & 0 \in \partial f_m(u_m^{k+1}) + \frac{\delta}{m-1}L_m^\top \left( L_mu_m^{k+1} + \sum_{j=1}^{m-1}L_j u_j^{k+1} - b + \frac{m-1}{\delta}y^k\right).
        \end{cases}
    \end{equation*}
\else
  \begin{equation*}
        \begin{cases}
           &  0\in \partial f_i(u_i^{k+1}) +\gamma L_i^\top \left( L_iu_i^{k+1} + \frac{1}{m-1}\left( \sum_{j=1}^m L_ju_j^k - b\right)  - L_iu_i^k  + \frac{y^k}{\gamma}\right), \quad i=1,\dots,m-1 \\
           & 0 \in \partial f_m(u_m^{k+1}) + \frac{\delta}{m-1}L_m^\top \left( L_mu_m^{k+1} + \sum_{j=1}^{m-1}L_j u_j^{k+1} - b + \frac{m-1}{\delta}y^k\right).
        \end{cases}
    \end{equation*}
\fi 
Since $y^k = y^{k+1} - \frac{\delta}{m-1}\left( \sum_{j=1}^{m} L_j u_j^{k+1} - b \right) $ from \eqref{eq:admm_y_specialcase}, we have
    \begin{equation*}
        \begin{cases}
           &  0\in \partial f_i(u_i^{k+1}) + L_i^\top y^{k+1} + s_i^{k+1}, \quad i=1,\dots,m-1 \\
           & 0 \in \partial f_m(u_m^{k+1}) + L_m^\top y^{k+1}
        \end{cases}
    \end{equation*}
where, for $i=1,\dots,m-1 $,  
    \[s_i^{k+1} \coloneqq \gamma L_i^\top L_i (u_i^{k+1} - u_i^k) + \frac{L_i^\top }{m-1} \left( \sum_{j=1}^{m} L_j(\gamma u_j^k- \delta u_j^{k+1}) +(\gamma - \delta)b\right) ,\]
Hence, \eqref{eq:stopcriterion} holds if 
  \begin{equation}
        \max \left\lbrace
        \norm{s_1^k}, \dots, \norm{s_{m-1}^k} , \norm{\sum_{i=1}^{m} L_iu_i^k - b} \right\rbrace \leq \epsilon. 
    \label{eq:stopcriterion_simplified}
    \end{equation}

\paragraph{Results.}  We begin by comparing the performance of \cref{alg:admm_specialcase} under two settings: equal stepsizes defined by~\eqref{eq:equalstepsize}, and unequal stepsizes governed by~\eqref{eq:unequalstepsize}. As shown in \cref{tab:admm_comparison}, the use of unequal stepsizes leads to a slight improvement in the number of iterations required to reach the target residual. In contrast, both variants yield comparable accuracy in terms of the mean absolute error (MAE):
\begin{equation*}
\text{MAE} \coloneqq  \frac{1}{n} \sum_{i=1}^n |x_i - \phi_i|.
\end{equation*}
We then compare our method with the classical Gauss--Seidel multiblock ADMM (GS-ADMM) given by
\begin{align}
u_i^{k+1} &= \arg\min_{w_i\in \H_i} ~ f_i(w_i) + \frac{\gamma'}{2}\norm{\sum_{j=1}^{i-1} L_ju_j^{k+1} + L_iw_i + \sum_{j=i+1}^mu_j^k + \frac{y^k}{\gamma'} }^2 ,\notag \\
& (i = 1, \ldots, m), \notag \\
y^{k+1} &= y^k + \gamma' \left(\sum_{i=1}^m L_i u_i^{k+1} - b\right),
\tag{GS-ADMM}
\label{eq:GS-ADMM}
\end{align}
where $\gamma'>0$.  Although \eqref{eq:GS-ADMM} is widely used in practice, it lacks convergence guarantees even in the fully convex case, as noted in~\cite{Chen2013}. A recent result~\cite{ChenCuiHan2025} establishes convergence of \eqref{eq:GS-ADMM} for the three-block setting under  structural assumptions similar to our present work -- namely, one weakly convex and two strongly convex objective components. However, unlike our stepsize formulas in~\eqref{eq:equalstepsize} and~\eqref{eq:unequalstepsize}, the stepsize conditions in~\cite{ChenCuiHan2025} involve more intricate calculations and less straightforward. In fact, it is unclear whether the feasible stepsize interval prescribed in~\cite{ChenCuiHan2025} is nonempty for our problem setting.

Despite this, we include \eqref{eq:GS-ADMM} in our numerical comparison to evaluate its empirical performance against our provably convergent algorithm. We used the stepsize $\gamma' = \frac{\gamma}{m-1}$ for \eqref{eq:GS-ADMM}, similar to \cref{alg:admm_specialcase}, and both algorithms are implemented with the same value of $\gamma$ given in~\eqref{eq:unequalstepsize}. We adopt the stopping criterion~\eqref{eq:stopcriterion_simplified}, with the residual term 
$s_i^{k}\coloneqq \gamma L_i^\top \left(\sum_{j=i+1}^m L_j (u_j^{k-1} - u_j^{k}) \right) ,$
so that \eqref{eq:stopcriterion_simplified} implies the original condition~\eqref{eq:stopcriterion}, by a similar calculation as above. The numerical results are summarized in \cref{fig:numerical}. \cref{alg:admm_specialcase} demonstrates consistently faster convergence in terms of the residual norm compared to GS-ADMM. Moreover, it also achieves a lower mean absolute error (MAE) across iterations.

\begin{table}[tb!]
\centering
\caption{{\small Performance comparison between \cref{alg:admm_specialcase} with unequal and equal stepsizes over 10 runs 
with $n=3000$. The algorithm is terminated when \eqref{eq:stopcriterion_simplified} holds with $\epsilon=10^{-4}$.}}
\begin{tabular}{lccc}
\hline
\cref{alg:admm_specialcase} & \textbf{Iter} & \textbf{Residual} & \textbf{MAE} $\boldsymbol{\pm}$ \textbf{std} \\
\hline
Unequal stepsize & 1658 & 9.98e-05 & 0.0441 $\pm$ 0.00578 \\ 
Equal stepsize & 1682 & 9.96e-05 & 0.0441 $\pm$ 0.00578 \\ 

\hline
\end{tabular}
\label{tab:admm_comparison}
\end{table}

\ifdefined\submit
  \def\figscale{0.35}
\else
  \def\figscale{.55}
\fi

 \begin{figure}[tb!]
	\centering
	\begin{tabular}{@{}cc@{}}
        \includegraphics[scale=\figscale]{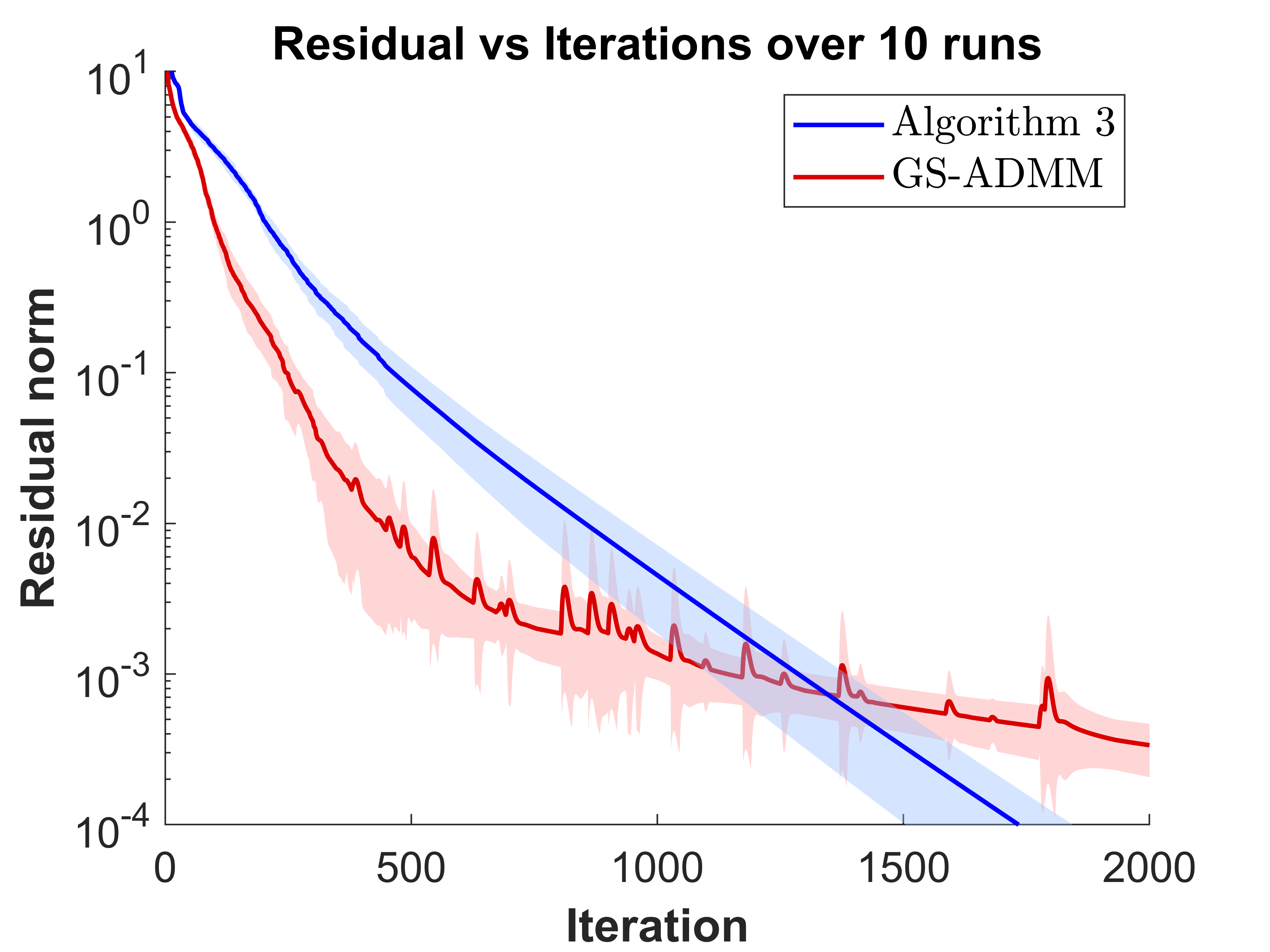}&
		\includegraphics[scale=\figscale]{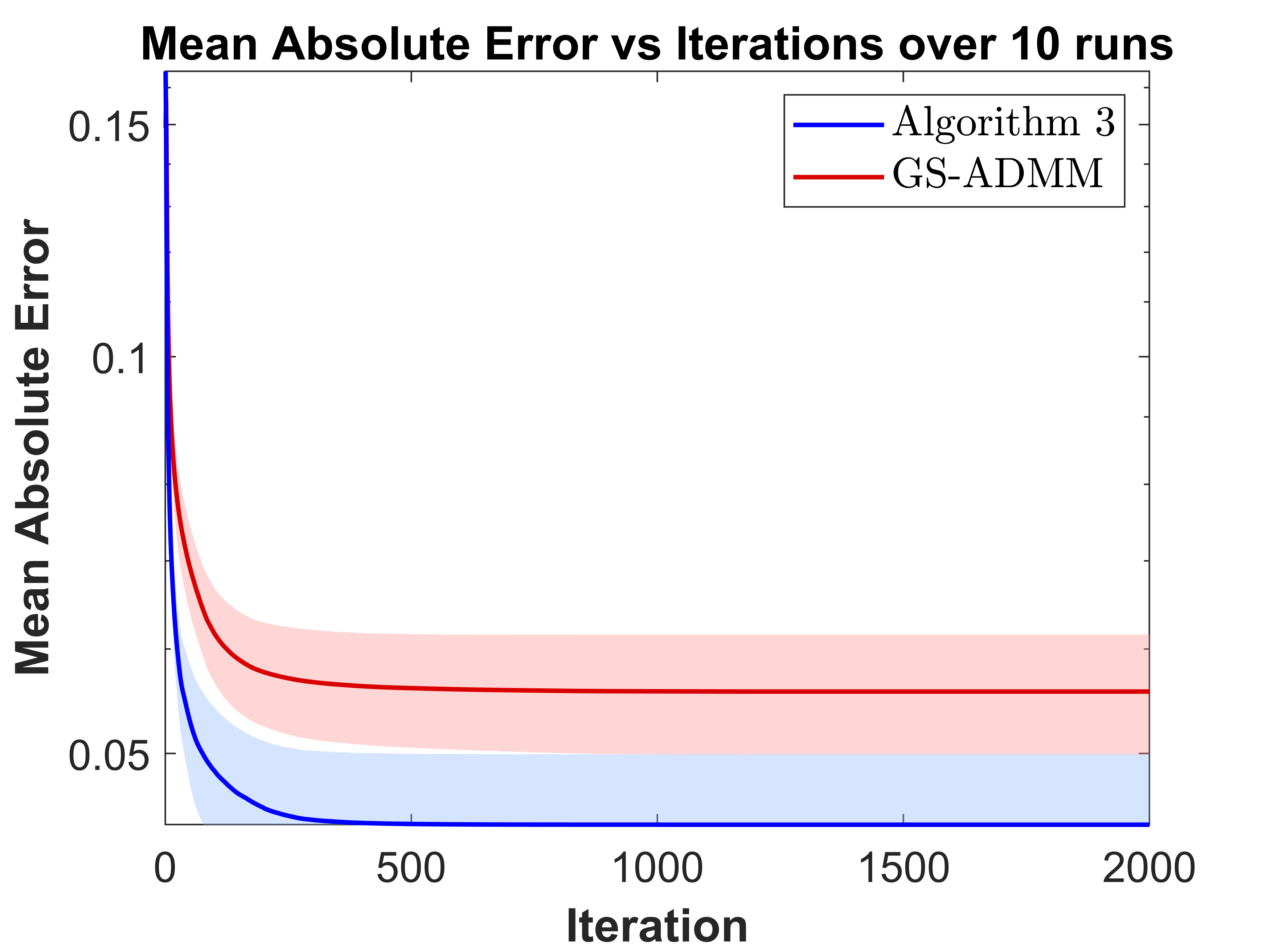}\\
		(a) Residual norm vs.~iteration & (b)  Mean Absolute Error (MAE) vs.~iteration.
	\end{tabular}
	\caption{{\small 
Performance comparison of \cref{alg:admm_specialcase} and GS-ADMM over 10 independent runs with $n = 3000$, based on the results obtained after 2000 iterations. Shaded regions represent one standard deviation across runs.
}
}
\label{fig:numerical}
\end{figure}

\section{Conclusion}\label{sec:conclusion}

We established the convergence of the adaptive Douglas--Rachford (aDR) algorithm applied to a two-operator reformulation of the multioperator inclusion problem, in the case where all but one of the operators are strongly comonotone and the remaining operator is weakly comonotone. Previously, our work \cite{AlcantaraTakeda2025} investigated the Douglas--Rachford algorithm for multioperator inclusions where the operators involved are weakly and strongly monotone. Hence, the present work contributes to the broader understanding of the Douglas--Rachford method in nonmonotone settings involving more than two operators, extending the theory to a wider class of comonotone mappings. An interesting direction for future research is to explore whether the convergence result can be extended to settings where more than one operator is weakly comonotone. In addition, our convergence analysis relies on the Attouch--Th\'era duality framework. It would be worthwhile to investigate whether the analysis can be carried out directly in the primal formulation, and whether the techniques developed here can be generalized to other multioperator splitting schemes, such as the generalized framework recently proposed in \cite{DaoTamTruong2025}, which does not rely on the product space reformulation technique.

\ifdefined\submit
\text{} \\
\noindent\textbf{Funding}.  
MND was partially supported by the Australian Research Council (ARC) under Discovery Project DP230101749. AT is supported by the Grant-in-Aid for Scientific Research (B), JSPS, under Grant No. 23H03351.

\noindent\textbf{Conflict of interest}.  
The authors declare no competing interests. 
\else 
\fi 

\appendix 
\bibliography{bibfile}

\end{document}